\documentclass[11pt,a4paper]{article}
\usepackage[T1]{fontenc}
\usepackage{lmodern}
\usepackage[margin=26mm]{geometry}
\usepackage{amsmath,amssymb,amsthm,mathtools}
\usepackage{enumitem}
\usepackage{mathrsfs}
\usepackage{indentfirst}
\usepackage{microtype}
\usepackage[colorlinks=true,linkcolor=blue,citecolor=blue,urlcolor=blue]{hyperref}
\numberwithin{equation}{section}
\newtheorem{theorem}{Theorem}[section]
\newtheorem{proposition}[theorem]{Proposition}
\newtheorem{lemma}[theorem]{Lemma}
\newtheorem{corollary}[theorem]{Corollary}
\theoremstyle{definition}

\newtheorem{assumption}{Assumption}

\theoremstyle{remark}
\newtheorem{remark}[theorem]{Remark}
\newcommand{\R}{\mathbb R}
\newcommand{\sgn}{\operatorname{sgn}}
\newcommand{\erfc}{\operatorname{erfc}}

\allowdisplaybreaks[2]
\setlist[enumerate]{label=(\roman*),leftmargin=2.2em}
\title{Time Delay of KPP Fronts Across a Semipermeable Barrier}
\author{Yantao Wang\\[0.3em]
\small\href{mailto:wangyt4455@icloud.com}{\texttt{wangyt4455@icloud.com}}}
\date{}

\begin{document}
\maketitle
\begin{abstract}
We study a KPP reaction--diffusion equation on the real line with a semipermeable barrier at the origin, where the flux is continuous and the density jump is proportional to the spatial derivative. With diffusion and linear growth normalized to one, let $\beta$ denote the barrier resistance. For initial data equal to one sufficiently far to the left and zero near the barrier and on the receiving side, we prove convergence on the receiving side to a minimal-speed wave with a unique phase in the Bramson frame. The difference between distant first-arrival times with and without the barrier therefore has a finite limit $\tau_\beta$, independent of the arrival level and continuous in $\beta$. For concave KPP reactions, this delay is strictly increasing in $\beta$ and satisfies $\tau_\beta=K\beta+o(\beta)$ as $\beta\downarrow0$, with $K>1/4$. Without the concavity assumption, the strong-barrier expansion is $\tau_\beta=\frac12\log\beta-\frac12\log\log\beta+C+o(1)$, where $C$ is determined by the homogeneous reference amplitude and the closed donor-side Neumann evolution. The analysis relates the sustained interface input to the front phase through leading-edge estimates, interior matching, and parameter-uniform moment bounds.
\end{abstract}
\noindent\textbf{Keywords:} KPP equation; semipermeable barrier; Bramson correction; far-field time delay; parameter monotonicity; singular limits.

\section{Introduction and main results}\label{sec:introduction}

How long does a geographical barrier delay the spread of an advantageous allele, and how does this delay depend on the strength of the barrier? Pi\'alek and Barton~\cite{PB1997} investigated this question in a deterministic additive-selection model that reduces, after scaling, to a logistic reaction--diffusion equation with a semipermeable barrier. Their analytical approximations and numerical computations indicated logarithmic growth of the delay for strong barriers. Barton and Turelli~\cite{BT2011}, drawing on~\cite{PB1997}, contrasted this behavior with the possibility of stopping bistable waves at local barriers. Lou, Nagylaki and Ni~\cite[Section~3.3.3]{LNN2013} subsequently identified the study of traveling waves across a geographical barrier as an open problem, highlighting the numerical evidence that the asymptotic delay decreases as barrier transmissivity increases. These observations motivate a precise definition of the lasting delay and an analysis of its dependence on barrier resistance.

The interface description has its roots in migration--selection theory. Nagylaki~\cite{Nagylaki1976} distinguished the frequency jump caused by a localized geographical barrier from the gradient jump caused by a discontinuity in migration rate, and studied their effects on the maintenance of polymorphism. Barton and Bengtsson~\cite{BB1986} quantified barriers to genetic exchange between demes and across continuous habitats, focusing on barriers generated by selection at linked loci. The present model instead prescribes a geographical barrier through an interface law that preserves flux continuity and makes the exchange flux proportional to the density jump.

The mathematical theory of such interfaces includes Chen's study of barrier boundary value problems for parabolic and elliptic equations~\cite{Chen2001}. In bounded habitats, Chen~\cite{Chen2006} established a comparison principle and global existence for a reaction--diffusion interface problem, and studied how equilibrium stability depends on permeability. Wang and Su~\cite{WangSu2022} investigated global dynamics and the existence, stability, and profiles of spatially nonconstant equilibria. Subsequent work addressed positive equilibria under asymmetric transmission~\cite{MMS2023} and parameter dependence of stationary solutions in spatially heterogeneous population models~\cite{AB2023}. For propagation on the real line, Hamel, Lutscher and Zhang~\cite{HLZ2022} studied spreading and blocking in a two-patch model with KPP and bistable reactions, together with traveling-wave stability in appropriate bistable regimes. Their transmission law couples flux continuity to a fixed proportionality of the original density traces, whereas here the density jump is proportional to the flux. Our question concerns the lasting phase change after a front crosses this interface.

The homogeneous Fisher--KPP equation, originating in the studies of gene spread by Fisher~\cite{Fisher1937} and Kolmogorov, Petrovskii and Piskunov~\cite{KPP1937}, provides the reference dynamics. For the logistic reaction and step-like initial data, Bramson~\cite{Bramson1983} proved convergence to a translate of the minimal-speed wave in the frame $2t-\frac32\log t$. Hamel et al.\ \cite{HNRR2013} gave a PDE proof of the logarithmic correction, while Nolen--Roquejoffre--Ryzhik~\cite{NRR2017} established convergence to a single translate through leading-edge analysis. To identify a finite barrier-induced delay, one needs the constant phase relative to the homogeneous solution in addition to the spreading speed and logarithmic correction. Mytnik--Roquejoffre--Ryzhik~\cite{MRR2022} obtained precise small-data asymptotics for the phase in the logistic equation. Their result supplies a comparison estimate in our strong-barrier analysis, where transmission provides a weak, sustained input to the receiving side.

We study this delay for spatially homogeneous KPP reactions and step-like initial data supported on the donor side. We define it as the limit of the difference between distant first-arrival times for solutions with and without a barrier, starting from the same initial data. We prove convergence on the receiving side to a minimal-speed wave with a unique phase in the Bramson frame, which yields a finite delay independent of the arrival level. The delay is continuous in the resistance; under an additional concavity assumption, it is strictly increasing and admits a first-order weak-barrier expansion. For strong barriers, we refine the earlier logarithmic description by deriving a log-log correction and a constant determined by the homogeneous reference phase and the evolution on the limiting closed donor side. The proofs combine self-similar leading-edge analysis, interior matching, and parameter-uniform moment estimates to relate the sustained interface input to the asymptotic phase.

\subsection{Model, assumptions, and notation}
We consider the spread of an advantageous allele in a one-dimensional habitat separated by a semipermeable barrier at the origin. The regions $x<0$ and $x>0$ are the donor and receiving sides, respectively. Let $u(t,x)$ denote the allele frequency. Away from the barrier, migration is modeled by diffusion and selection by a reaction term $f$. By rescaling time and space, we normalize both the diffusion coefficient and the linear growth rate at zero to one, so that $f'(0)=1$. In these rescaled variables, the evolution for $0<\beta<\infty$ is governed by
\begin{equation}\label{eq:model}
 u_t=u_{xx}+f(u),\qquad t>0,\quad x\ne0,
 \qquad u(0,x)=u_0(x).
\end{equation}
We model the barrier as an interface with no absorption or accumulation, across which the diffusive flux is conserved. Its finite permeability allows the allele frequency to have a jump, with the transmitted flux proportional to the frequency difference across the interface, as in the geographical-barrier description of~\cite{Nagylaki1976,PB1997}. For a spatial function, write $[v]=v(0^+)-v(0^-)$. For a time-dependent function, write $[v](t)=v(t,0^+)-v(t,0^-)$, omitting $t$ when it is clear. We impose
\begin{equation}\label{eq:interface}
 u_x(t,0^-)=u_x(t,0^+),\qquad
 [u](t)=\beta u_x(t,0^\pm),\qquad t>0.
\end{equation}
The parameter $\beta>0$ measures the resistance to exchange: for a fixed frequency difference, a larger resistance produces a smaller flux magnitude. In the present scaling, it is the reciprocal of the transmissivity, denoted here by $\gamma_{\rm tr}$, in the convention of~\cite{LNN2013,WangSu2022}, so $\beta=\gamma_{\rm tr}^{-1}$. The subscript distinguishes transmissivity from the cutoff exponents used below.

When the dependence on the resistance needs to be explicit, we write the solution as $u^\beta$. The case $\beta=0$ represents the absence of a barrier. We denote by $u^0$ the homogeneous full-line solution with the same initial data. Formally, the opposite limit $\beta\to\infty$ corresponds to an impermeable barrier, with zero flux on each side.

We impose the following KPP assumptions on the reaction term.
\begin{assumption}\label{hyp:f}
The function $f$ satisfies
\begin{equation}\label{eq:kpp}
 f\in C^2([0,1]),\quad f(0)=f(1)=0,\quad
 f'(0)=1,\quad f'(1)<0,\quad 0<f(s)\le s\quad(0<s<1).
\end{equation}
\end{assumption}
The states $0$ and $1$ represent absence and fixation of the advantageous allele, respectively. The positivity of $f$ favors its increase at intermediate frequencies, while the KPP bound ensures that the reaction term does not exceed its linearization at zero. The condition $f'(1)<0$ makes fixation linearly stable under the reaction dynamics.

The invasion is initiated by step-like data on the donor side.
\begin{assumption}\label{hyp:data}
The initial function $u_0$ is measurable, and there exist $a<b<0$ such that
\begin{equation}\label{eq:data}
 0\le u_0\le1,\qquad u_0(x)=1\quad(x\le a),\qquad
 u_0(x)=0\quad(x\ge b).
\end{equation}
\end{assumption}
Thus, the allele is fixed sufficiently far to the left and absent near the barrier and throughout the receiving side. No continuity or monotonicity is required in the transition interval $(a,b)$.

Only for strict ordering of the delay with respect to the resistance and the first-order weak-barrier expansion do we impose the following additional concavity condition.
\begin{assumption}\label{hyp:concave}
The reaction term satisfies $f''(s)\le0$ for $0\le s\le1$.
\end{assumption}
The logistic reaction $f(s)=s(1-s)$ satisfies this condition. Assumptions~\ref{hyp:f}--\ref{hyp:data} suffice for phase convergence, continuity of the far-field delay, the strong-barrier expansion, and the finite-time first variation at zero resistance. Assumption~\ref{hyp:concave} is used additionally for strict ordering and for the far-field first-order expansion, including the bound on its coefficient; strict concavity is not required.

To describe the front after it has crossed the barrier, we use the minimal-speed traveling wave of the corresponding homogeneous equation as a reference profile. Under Assumption~\ref{hyp:f} and our normalization, monotone fronts exist precisely for speeds $c\ge2$, and each profile is unique up to translation; see~\cite[discussion preceding Theorem~1.2]{HNRR2013}. The minimal wave speed is therefore $2$. Writing $z$ for the coordinate in a frame moving to the right at speed $2$, let $U_*(z)$ denote the decreasing wave profile satisfying
\begin{equation}\label{eq:wave}
 U_*''+2U_*'+f(U_*)=0,\quad
 U_*(-\infty)=1,\quad U_*(+\infty)=0.
\end{equation}
We fix the translation by requiring $U_*(0)=1/2$. Its leading edge has the asymptotic form
\begin{equation}\label{eq:wave-tail}
 U_*(z)=(A_*z+B_*+o(1))e^{-z}\qquad(z\to+\infty),\qquad A_*>0.
\end{equation}
For clarity, this tail expansion also follows directly from the wave ODE.
Linearization at $U_*=0$ gives $U_*,U_*'=O(e^{-(1-\delta)z})$ for
any $0<\delta<1/2$. With $V_*=e^zU_*$, the equation becomes
$V_*''=e^z[U_*-f(U_*)]\ge0$. Its right-hand side has an integrable first
moment at $+\infty$, since $s-f(s)=O(s^2)$. Integrating twice gives
$V_*=A_*z+B_*+o(1)$. Moreover, $V_*'(-\infty)=0$ and
$U_*-f(U_*)$ is positive near $U_*=1$, so $A_*>0$.
The coefficient $A_*$ will be used below to relate the front phase to its
leading-edge amplitude.

For the homogeneous evolution under Assumptions~\ref{hyp:f}--\ref{hyp:data}, the front position includes the Bramson logarithmic correction. We therefore introduce
\begin{equation}\label{eq:m}
 m(t)=2t-\frac32\log t\qquad(t>0),
\end{equation}
as the reference position. The phase relative to this frame will encode the effect of a finite-resistance barrier.

To measure the resulting delay, we record when the allele frequency first reaches a prescribed level at a point on the receiving side. For $\beta\ge0$, $\eta\in(0,1)$, and $R>0$, define
\begin{equation}\label{eq:arrival}
 T_\eta^\beta(R)=\inf\{t>0:u^\beta(t,R)\ge\eta\},
\end{equation}
with the convention $\inf\varnothing=+\infty$. The difference $T_\eta^\beta(R)-T_\eta^0(R)$ compares arrival at the same level and location for the solutions with and without a barrier. The next theorem establishes the far-field delay and its relation to the asymptotic phase.

\subsection{Main results}
Throughout this subsection, $f$ and $u_0$ are fixed, and $\beta$ varies. We first establish the limiting arrival-time difference and its continuity; positivity and strict ordering are obtained under the additional concavity assumption.

\begin{theorem}\label{thm:phase}
Suppose that Assumptions~\ref{hyp:f}--\ref{hyp:data} hold. For every $\beta\in[0,\infty)$, there is a unique $\xi_\beta\in\R$ such that, as $t\to\infty$,
\begin{equation}\label{eq:phase-limit}
 \sup_{x\ge0}\big|u^\beta(t,x)-U_*(x-m(t)-\xi_\beta)\big|\longrightarrow0.
\end{equation}
At $x=0$, the value of $u^\beta$ in~\eqref{eq:phase-limit} is understood as the right-hand trace $u^\beta(t,0^+)$. Define the leading-edge amplitude by $\alpha_\beta=A_*e^{\xi_\beta}>0$. For every $\eta\in(0,1)$, the following limit exists, is finite, and is independent of $\eta$:
\begin{equation}\label{eq:delay}
 \tau_\beta:=\lim_{R\to\infty}
 \big(T_\eta^\beta(R)-T_\eta^0(R)\big)
 =\frac{\xi_0-\xi_\beta}{2}
 =\frac12\log\frac{\alpha_0}{\alpha_\beta}.
\end{equation}
The maps $\beta\mapsto\alpha_\beta$ and $\beta\mapsto\tau_\beta$ are continuous on $[0,\infty)$, and $\tau_0=0$.
\end{theorem}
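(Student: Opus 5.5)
Our plan is to reduce the receiving-side dynamics to a half-line KPP problem driven by the interface trace, and then run the leading-edge argument of Nolen--Roquejoffre--Ryzhik~\cite{NRR2017} on that half-line.

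\emph{Step 1: donor side and interface input.} First we establish well-posedness and a comparison principle for \eqref{eq:model}--\eqref{eq:interface}, in the spirit of Chen~\cite{Chen2006} on the full line. The key structural fact is that the interface conditions make the bilinear form $\int u_x\phi_x\,dx+\beta^{-1}[u][\phi]$ symmetric and nonnegative. A barrier-free supersolution such as $\min(1,Ce^{-(x-2t)})$ is not admissible across the interface, but the constant $1$ is, and truncated half-line profiles can be made admissible. On the donor side we show $u^\beta(t,x)\to1$ locally uniformly for $x<0$. For this we compare with the Neumann problem on $x<0$ with reflected data, a subsolution, which yields $u^\beta(t,0^-)\to1$. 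From this we deduce that the right trace $g(t):=u^\beta(t,0^+)$ and the flux $u_x(t,0^+)$ converge to a stationary regime. The receiving side then solves a KPP problem on $(0,\infty)$ with the Robin-type condition $u(t,0^+)-\beta u_x(t,0^+)=u(t,0^-)\to1$.

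\emph{Step 2: receiving-side Bramson frame and phase.} On $x>0$ we pass to the self-similar variables of~\cite{NRR2017}: $w=e^{x-m(t)}u$ and $\eta=(x-m(t))/\sqrt t$. The boundary sits at $x=0$, which is far behind the front, at $-m(t)$. We bracket $u$ between solutions of the half-line problem with Dirichlet data $1-\varepsilon$ and $1$ imposed at $x=L$. Each of these is a standard step-like problem after reflection or absorbing-boundary comparison, hence has a Bramson front with a phase. Their leading-edge amplitudes are given by a convergent integral of the boundary input against the Dirichlet heat kernel in the moving frame. Since this integral depends continuously on the input, and the input converges exponentially fast to its limit in Step 1, the amplitude $\alpha_\beta$ is well defined. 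The NRR matching then yields \eqref{eq:phase-limit} with $\xi_\beta=\log(\alpha_\beta/A_*)$. Uniqueness of $\xi_\beta$ is immediate because $U_*$ is strictly monotone. For $\beta=0$ the statement reduces to the NRR theorem restricted to $x\ge0$.

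\emph{Step 3: arrival times.} Fix $\eta$ and let $z_\eta=U_*^{-1}(\eta)$. Uniform convergence together with $U_*'<0$ gives that the first time $u^\beta(t,R)$ reaches $\eta$ satisfies $R=m(T)+\xi_\beta+z_\eta+o(1)$. To exclude early spurious crossings, note that monotone convergence in the frame precludes $u^\beta(t,R)\ge\eta$ for $t$ substantially smaller, uniformly in large $R$. We then invert $m$: since $m'(t)\to2$ and $m(T^\beta)-m(T^0)\to\xi_0-\xi_\beta$, we get $T^\beta-T^0\to(\xi_0-\xi_\beta)/2$, which is independent of $\eta$.

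\emph{Step 4: continuity.} This is the main obstacle, because the limits in $t$ must be taken uniformly in $\beta$ on compacts. We plan to represent $\alpha_\beta$ as the limit, in the self-similar frame, of a first-moment functional $\lim_t\int_0^\infty xe^{x}u^\beta(t,x+m(t))\,dx$, suitably normalized. We would then prove that the convergence rate in Step 2 depends only on bounds that are uniform for $\beta$ in compact subsets of $[0,\infty)$. These bounds are the exponential convergence of the interface data, which is uniform via a parameter-uniform Neumann-side comparison, and a uniform moment tail bound for the leading edge. Continuity of $\beta\mapsto u^\beta(t,\cdot)$ at fixed $t$ follows from the energy estimate for the symmetric interface form, including at $\beta=0$. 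Combining this finite-time continuity with the uniform-in-time tail gives continuity of $\alpha_\beta$, and hence of $\tau_\beta$. Finally, $\tau_0=0$ holds trivially.
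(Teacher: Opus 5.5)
\textbf{There is a genuine gap in Step 2.} Step 2 does not produce a single phase, and the Step 1 facts it relies on are not established.

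In Step 1 the comparison runs the wrong way. The barrier is a loss for the donor side, so the closed Neumann problem is a \emph{supersolution} there ($p_\beta\le p_N$), not a subsolution. A lower comparison confined to $x<0$ must absorb at the barrier and has trace $0$ there. So no one-sided argument gives $u^\beta(t,0^-)\to1$, let alone at an exponential rate. In the paper, $u^\beta(t,0^\pm)\to1$ is a \emph{consequence} of phase convergence (from \eqref{phase:uniform-wave} and $p_\beta>q_\beta$), not an input to it.

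In Step 2, grant that the half-line problems with boundary values $1-\varepsilon$ and $1$ at $x=L$ each have a Bramson phase $\xi^\mp_\varepsilon$. Bracketing still only traps $u^\beta$ between two translates. To obtain a single $\xi_\beta$ you must show $\xi^+_\varepsilon-\xi^-_\varepsilon\to0$. That is a uniform-in-time leading-edge estimate for the difference of two nonlinear solutions, which is precisely the hard part. Your justification, that the amplitude is a convergent integral of the boundary input against the Dirichlet heat kernel and hence continuous in the input, fails for the nonlinear problem. Near $x=L$ the solution is close to $1$, so linearization at $0$ is invalid there. The amplitude depends nonlinearly on the whole evolution; compare the $\delta\log\delta^{-1}$ law in Lemma~\ref{large:small-data}.

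\textbf{The missing idea} is that the interface need not be controlled at all to obtain the amplitude. The paper places the cutoff inside the leading edge, at $z=x-m(t)=-t^\rho$ with $0<\rho<1/3$.
\begin{itemize}
\item There, $u^\beta\le1$ alone bounds the self-similar boundary value by $t^{-1/2}e^{-t^\rho}$.
\item A linear supersolution then gives $O(t^{\rho-1/2})$ control at $z=+t^\rho$. Beyond that point the potential $t\,g(u^\beta)/u^\beta$ is superexponentially small.
\item The first moment of the truncated problem satisfies $J'=b+\vartheta\int P\,dy-\int yAP\,dy$, whose right side is integrable. Hence $J$ converges to $2\sqrt\pi\alpha_\beta$ with error $Ct^{\rho-1/2}$.
\end{itemize}
The only inputs are $u^\beta\le1$ and the $\beta$-uniform Gaussian bound $q_\beta\le e^t\erfc(x/(2\sqrt t))$. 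So this error is uniform over all finite $\beta$.

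Your Step 4 plan (finite-time continuity plus a $\beta$-uniform moment rate) is the paper's. However, it needs no convergence of interface data. Uniformly in $\beta$, the paper obtains such convergence only later, under concavity, and using phase convergence already proved.

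Three further pieces are still needed on top of this:
\begin{itemize}
\item a separate seed argument for $\alpha_\beta>0$;
\item a matching step from amplitude to interior profile. Citing NRR does not cover this, since their result concerns the full-line logistic Cauchy problem. The paper uses a renormalized energy on $|z|<t^\gamma$, $\gamma<1/3$, with the actual left trace as data.
\item $q_{\beta,x}<0$, to pass to all $x\ge0$.
\end{itemize}

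Finally, in Step 3, convergence in the moving frame says nothing about bounded times. Crossings at bounded times must be excluded by the finite-time tail bound \eqref{phase:finite-time-tail}.
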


By~\eqref{eq:wave-tail}, $\alpha_\beta$ is the coefficient of $ze^{-z}$ in the tail of $U_*(z-\xi_\beta)$ and is independent of the translation used to normalize $U_*$. Our sign convention makes a larger phase $\xi_\beta$, and hence a larger amplitude $\alpha_\beta$, correspond to a front farther to the right. The asymptotic spatial lag relative to the homogeneous front is $\xi_0-\xi_\beta=2\tau_\beta$. We call $\tau_\beta$ the far-field time delay; under Assumptions~\ref{hyp:f}--\ref{hyp:data}, this denotes the signed limiting arrival-time difference. Its positivity for $\beta>0$ follows under concavity from the next theorem.

\begin{theorem}\label{thm:monotonicity}
Suppose that Assumptions~\ref{hyp:f}--\ref{hyp:concave} hold. The leading-edge amplitude is strictly decreasing in the resistance, whereas the far-field delay is strictly increasing. More precisely,
\begin{equation}\label{eq:strict-order}
 0\le\beta_1<\beta_2<\infty
 \quad\Longrightarrow\quad
 \alpha_{\beta_1}>\alpha_{\beta_2},\qquad
 \tau_{\beta_1}<\tau_{\beta_2}.
\end{equation}
Since $\tau_0=0$, it follows that $\tau_\beta>0$ for every $\beta>0$.
\end{theorem}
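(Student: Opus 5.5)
By Theorem~\ref{thm:phase} we have $\tau_\beta=\frac12\log(\alpha_0/\alpha_\beta)$, so it suffices to prove $\alpha_{\beta_1}>\alpha_{\beta_2}$; this is equivalent to $\xi_{\beta_1}>\xi_{\beta_2}$. The plan has two parts: a comparison argument showing that more resistance gives less transmitted mass on the receiving side, and a strictness argument that turns a definite deficit at a finite time into a strict gap in phase.

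\emph{Step 1 (ordering).} I would compare $u^{\beta_1}$ and $u^{\beta_2}$ through a sign-preserving structure. For a single solution, the interface flux $u_x(t,0)$ is nonpositive, and $u^\beta$ is nonincreasing in $x$ on each side once the front has formed. I expect monotonicity in $x$ for data that are not monotone to need a separate argument; if necessary I would first compare with monotone data squeezed between translates. Ordering the whole solutions $u^{\beta_1}\ge u^{\beta_2}$ fails on the donor side, where the stronger barrier traps more mass. So I would order the pair $(-u \text{ on } x<0,\; u \text{ on } x>0)$ instead. Concretely, set $w=u^{\beta_1}-u^{\beta_2}$ and aim to show $w\le0$ on $x<0$ and $w\ge0$ on $x>0$. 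The linearized interface conditions read $w_x(0^-)=w_x(0^+)$ and $[w]=\beta_1 w_x+(\beta_1-\beta_2)u^{\beta_2}_x$. Since $u^{\beta_2}_x(t,0)\le0$ and $\beta_1<\beta_2$, the extra term $(\beta_1-\beta_2)u^{\beta_2}_x$ is nonnegative and so has the right sign. The zeroth-order coefficient $c=(f(u^{\beta_1})-f(u^{\beta_2}))/w$ is bounded, so a maximum-principle argument for this two-sided sign pattern applies. I expect this to be the main obstacle. The difficulty is that the pattern is not a standard order: the interface couples the two sides with opposite signs, and one has to check that an interior first-touching point at $x=0^\pm$ contradicts the Robin-type law. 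Concavity (Assumption~\ref{hyp:concave}) enters here or in Step 3, to control the reaction in a way that is monotone in the mass. An alternative I would keep in reserve is to compare $u^{\beta_2}$ on $x>0$ with the solution of a receiving-side half-line problem driven by the smaller flux.

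\emph{Step 2 (weak inequality).} Once $u^{\beta_1}\ge u^{\beta_2}$ holds on $x>0$ for all $t$, Theorem~\ref{thm:phase} gives $U_*(x-m-\xi_{\beta_1})\ge U_*(x-m-\xi_{\beta_2})$ in the limit. Hence $\xi_{\beta_1}\ge\xi_{\beta_2}$.

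\emph{Step 3 (strictness).} The strong maximum principle gives $w(1,x)\ge\varepsilon\phi(x)>0$ on $x>0$, with a Gaussian-type lower bound on the tail. I would then exploit the KPP leading-edge mechanism used for Theorem~\ref{thm:phase}: the amplitude $\alpha$ is determined by a linear functional of the data at large times, roughly $\int y\,e^{y}u\,dy$ in the self-similar frame. For concave $f$, the map $s\mapsto f(s)/s$ is nonincreasing, so $u^{\beta_1}$ is a supersolution of the equation for $u^{\beta_2}$ shifted by an amount proportional to the deficit. More precisely, I would compare $u^{\beta_1}(t+1,\cdot)$ with $u^{\beta_2}(t+1,\cdot-\delta)$ for small $\delta>0$. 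To do so, I would show that at time $1$ the positive part of $w$ dominates a small translate in the leading edge, and use that the phase functional is strictly monotone under strictly ordered data on the receiving half-line. Together with the ordering from Step 1, this yields $\xi_{\beta_1}\ge\xi_{\beta_2}+\delta$.

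Finally, $\tau_{\beta_1}<\tau_{\beta_2}$ follows from the formula in Theorem~\ref{thm:phase}. Taking $\beta_1=0$ and using $\tau_0=0$ gives $\tau_\beta>0$ for every $\beta>0$.
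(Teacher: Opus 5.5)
Your Step~2 and the final deduction are fine, but Steps~1 and~3 each have a genuine gap.

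\textbf{Step 1: the two-sided ordering.} The pattern you propose ($u^{\beta_1}\le u^{\beta_2}$ on $x<0$, $u^{\beta_1}\ge u^{\beta_2}$ on $x>0$) is not protected by any maximum principle. In reflected coordinates, put $a=p_{\beta_2}-p_{\beta_1}$ and $b=q_{\beta_1}-q_{\beta_2}$ on $x>0$. For $\beta_1>0$ the interior equations decouple, and the interface law gives
\[
 a_x(t,0)=b_x(t,0)=\frac{a(t,0)+b(t,0)-(\beta_2-\beta_1)j_{\beta_2}(t)}{\beta_1}.
\]
The inflow $-b_x(t,0)$ into one component decreases with the trace of the other, so the coupling is competitive in your cone. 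At a touching point $b(t,0)=0$, the inequality $b_x(t,0)\ge0$ is compatible with this law as soon as $a(t,0)>(\beta_2-\beta_1)j_{\beta_2}(t)$, and symmetrically for $a$. No contradiction arises.

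The donor-side inequality is also stronger than you need, and it should not be expected. A formal late-time linearization about $u\equiv1$ near the interface treats the deficit as carried by $e^{\lambda(x-m(t))}$ with $\lambda^2+2\lambda=-f'(1)$. It gives
\[
 \frac{1-u^\beta(t,0^-)}{1-u^0(t,0)}\to\frac{2}{2+\beta\lambda}e^{2\lambda\tau_\beta}=1+\lambda\beta\bigl(2K-\tfrac12\bigr)+o(\beta),
\]
which exceeds $1$ for small $\beta$ precisely because $K>1/4$. The same ratio governs $j_\beta/j_0$, so the flux ordering your fallback needs fails as well.

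The missing idea is to order the folded mean and difference rather than the two traces. With $S_\beta,D_\beta$ from \eqref{eq:fold}, $S$ has a Neumann condition and $D$ a Robin condition with coefficient $2/\beta$. Concavity gives $F_D=H_S=\frac12\bigl(f'(S+D)-f'(S-D)\bigr)\le0$ on $\{D\ge0\}$. Hence $(r,h)=(S_{\beta_1}-S_{\beta_2},D_{\beta_2}-D_{\beta_1})$ solves a cooperative system with $r_x(t,0)=0$ and the positive Robin source $-h_x+k_2h=(k_1-k_2)D_{\beta_1}(t,0)$. This yields $r\ge0$ and $h>0$, so $q_{\beta_1}-q_{\beta_2}=r+h>0$, while $p_{\beta_1}-p_{\beta_2}=r-h$ is left unsigned (Proposition~\ref{prop:pointwise-order}). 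That is exactly where concavity enters.

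\textbf{Step 3: strictness.} This step does not work as described. Dominating a translate at a fixed time would require $q_{\beta_1}-q_{\beta_2}\gtrsim\delta|\partial_xq_{\beta_2}|$. In the Gaussian tail, however, $|\partial_xq_{\beta_2}|/q_{\beta_2}$ grows linearly in $x$, and nothing makes the difference heavier-tailed. A half-line comparison with $u^{\beta_2}(\cdot,\cdot-\delta)$ would also need interface control for all later times, which is the same flux ordering that fails.

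Moreover, ``strict monotonicity of the phase functional under strictly ordered data'' is the statement to be proved, not a tool. Pointwise positivity of $W=q_{\beta_1}-q_{\beta_2}$ alone does not exclude a vanishing normalized amplitude.

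The paper supplies this with Lemma~\ref{lem:positive-tail}. The difference $W$ solves $W_t=W_{xx}+cW$, with secant coefficient $c\ge1-Cte^{-z}$ by \eqref{eq:uniform-leading-bound} and $f\in C^2$. A time-slowed Dirichlet heat subsolution on the moving half-line $x>m(t)+t^\kappa$, with integrable normalization losses, then gives
\[
 \liminf_{t\to\infty}\frac{e^{t^\gamma}W(t,m(t)+t^\gamma)}{t^\gamma}>0,
\]
without any information at the interface. By the intermediate-scale limits \eqref{front:intermediate}, this limit equals $\alpha_{\beta_1}-\alpha_{\beta_2}$, which gives strictness.
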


We now quantify the response near zero resistance through a first-order expansion.

\begin{theorem}\label{thm:small-beta}
Suppose that Assumptions~\ref{hyp:f}--\ref{hyp:concave} hold. The far-field delay is right differentiable at zero resistance and satisfies
\begin{equation}\label{eq:small-delay}
 \tau_\beta=K\beta+o(\beta)\qquad(\beta\downarrow0),\qquad
 K=\tau'_+(0)>\frac14.
\end{equation}
The leading-edge amplitude also has a right derivative at zero, and
\[
 \dot\alpha_0:=\lim_{\beta\downarrow0}
 \frac{\alpha_\beta-\alpha_0}{\beta},\qquad
 K=-\frac{\dot\alpha_0}{2\alpha_0}.
\]
The right first variation
\[
 w=\left.\partial_\beta u^\beta\right|_{\beta=0^+}
   =\lim_{\beta\downarrow0}\frac{u^\beta-u^0}{\beta}
\]
exists, with convergence uniform on each finite time interval in $L^\infty(\R\setminus\{0\})$, and satisfies
\begin{equation}\label{eq:first-variation-main}
 \begin{cases}
 w_t=w_{xx}+f'(u^0)w,&t>0,\quad x\ne0,\\
 [w](t)=u_x^0(t,0),&t>0,\\
 [w_x](t)=0,&t>0,\\
 w(0,x)=0,&x\ne0.
 \end{cases}
\end{equation}
The amplitude derivative is recovered from the leading edge of this first variation: for every $0<\theta<1/3$,
\begin{equation}\label{eq:alpha-derivative-main}
 \dot\alpha_0=\lim_{t\to\infty}
 \frac{e^{t^\theta}w(t,m(t)+t^\theta)}{t^\theta}.
\end{equation}
\end{theorem}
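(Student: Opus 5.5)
The argument has three layers: finite-time differentiability in $\beta$, transport of that derivative to the leading-edge amplitude uniformly in small $\beta$, and a positivity computation that yields $K>1/4$.

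\emph{Step 1: the first variation on finite time intervals.} Set $v^\beta=(u^\beta-u^0)/\beta$. It satisfies a linear equation with potential $\int_0^1 f'(su^\beta+(1-s)u^0)\,ds$ and interface jump $[v^\beta]=u^\beta_x(t,0)$. The flux condition $[v^\beta_x]=0$ holds because $u^0_x$ is continuous.
\begin{enumerate}
\item First I would show that $u^\beta\to u^0$ uniformly on $[0,T]$ as $\beta\downarrow0$, together with uniform bounds on $u^\beta_x(t,0)$. For $t\ge\varepsilon$ these follow from interface parabolic estimates. For $t$ near $0$, the data vanish on $(b,\infty)$ with $b<0$, so the interface input is exponentially small.
\item Next I would write $v^\beta$ as a heat-kernel double-layer potential, with density given by the interface jump, plus a Duhamel term. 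A Gronwall argument then gives $L^\infty$ bounds on $v^\beta$.
\item Finally I would pass to the limit to obtain $w$ solving \eqref{eq:first-variation-main}, with uniform convergence on $[0,T]$.
\end{enumerate}
This step uses only Assumptions~\ref{hyp:f}--\ref{hyp:data}.

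\emph{Step 2: connecting $\partial_\beta$ to $\alpha$.} I would use the leading-edge characterization of the amplitude behind Theorem~\ref{thm:phase}. For $x=m(t)+t^\theta$ with $\theta<1/3$, one has $u^\beta(t,x)\approx \alpha_\beta\, t^\theta e^{-t^\theta}$, uniformly for $\beta$ in a compact set; I would take this as the parameter-uniform moment bound from the paper's machinery.

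Concavity is what makes the expression in \eqref{eq:alpha-derivative-main} behave well in $t$. Write $\psi=e^{x}\cdot(\cdot)$ in the moving frame. Since $f'(u^0)\le1$, the weighted linear evolution is dominated by the heat flow with Dirichlet-type absorption near $m(t)$. Hence the leading-edge projection of $w$,
\[
 t^{-\theta}e^{t^\theta}w(t,m(t)+t^\theta),
\]
converges. Its limit can be expressed as a sustained-input functional
\[
 \dot\alpha_0=-c\int_0^\infty \int \phi(s,y)\,u^0_x(s,0)\,\partial_y G\,dy\,ds,
\]
where $G$ is the adjoint linear kernel and $\phi$ is the leading-edge eigenprofile of the adjoint problem. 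Concavity makes $f'(u^0)$ monotone along the front, which yields convexity-type comparisons: $(u^\beta-u^0)/\beta$ is monotone in $\beta$, since $\partial_\beta^2u\le0$ follows from a comparison argument using $f''\le0$. The amplitude quotients are then monotone as well, so the right derivative exists and equals the $t\to\infty$ limit obtained from $w$. Interchanging $\beta\downarrow0$ with $t\to\infty$ is therefore justified by monotonicity, via Dini-type arguments, rather than by uniform estimates. Once $\dot\alpha_0$ is known, $K=-\dot\alpha_0/(2\alpha_0)$ follows from \eqref{eq:delay} by the chain rule.

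\emph{Step 3: the bound $K>1/4$.} The jump $[w]=u^0_x(t,0)<0$ injects a negative perturbation. The plan is to compare $w$ with the solution $\tilde w$ that would arise if the jump acted as a translation of $u^0$: consider $u^0(t,x-\beta)$ on the right half-line, which is the naive delay. This gives a reference drift of $1/2$ in space, that is, a delay of $1/4$ in time. Concavity, via $f'(u^0)\le f'(0)$ and strict inequality on a set of positive measure, should make $w-\tilde w$ a strict subsolution of the linear equation with nonpositive leading-edge mass, and the strong maximum principle should give strictness. I expect Step 2 to be the main obstacle: the uniform-in-$\beta$ leading-edge control with an interface input that persists for all time. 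The comparison construction in Step 3 is a second, less certain step; if it fails, I would try an adjoint computation of the coefficient instead.
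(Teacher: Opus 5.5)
Step~1 is fine in outline; the paper differentiates the transmission kernel instead of using a double-layer potential. The genuine gap is in Step~2, which is the heart of the theorem: interchanging $\beta\downarrow0$ with $t\to\infty$. Your mechanism fails for two reasons.

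\emph{The claimed concavity is false.} $\partial_\beta^2u\le0$ cannot hold on the receiving side. For fixed $t>0$ and $x>0$, the map $\beta\mapsto u^\beta(t,x)$ is nonnegative and strictly decreasing on $[0,\infty)$ (Proposition~\ref{prop:pointwise-order}), and a concave function with these properties would tend to $-\infty$. Consistently, differentiating $[u]=\beta u_x(t,0)$ twice gives the second variation an inhomogeneous jump $2\,\partial_x\partial_\beta u(t,0)$, which you have not controlled. Meanwhile the bulk source $f''(u)(\partial_\beta u)^2\le0$ pushes toward this impossible concavity, not toward convexity.

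\emph{Monotonicity would not suffice anyway.} Even with monotone difference quotients, a Dini-type argument yields only one inequality between $\lim_{\beta\downarrow0}(\alpha_\beta-\alpha_0)/\beta$ and $\lim_{t\to\infty}t^{-\theta}e^{t^\theta}w(t,m(t)+t^\theta)$. Time ranges over a noncompact set, and continuity at $t=\infty$ is exactly what must be proved. For example, $\min\{1,\beta t\}$ decreases to $0$ as $\beta\downarrow0$, yet tends to $1$ as $t\to\infty$ for each $\beta>0$.

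Nor does the parameter-uniform estimate \eqref{front:nonlinear-profile} for $u^\beta$ help: its error $Ct^{\rho-1/2}$ becomes $Ct^{\rho-1/2}/\beta$ after division by $\beta$. You need a leading-edge moment estimate for $W_\beta=(u^\beta-u^0)/\beta$ itself, uniform in $\beta\in(0,\beta_*]$, namely \eqref{small:moment-uniform-error}. The paper builds it in three steps:
\begin{itemize}
\item It proves uniform exponential decay of $d_\beta$ and $d_\beta'$ (Lemma~\ref{small:uniform-flux}), using parameter ordering and saturation behind the front.
\item It lifts the persistent jump $[W_\beta]=d_\beta$ into an exponentially small, compactly supported source, and screens it with a weight using $c_\beta\le-a_*$ behind the front and $c_\beta\le1$ ahead of it.
\item It applies Corollary~\ref{cor:signed-amplitudes}.
\end{itemize}
Concavity enters through $c_\beta\le1$ and the ordering $q_\beta\ge q_{\beta_*}$, not through any convexity in $\beta$. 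With this bound, existence of $\dot\alpha_0$ and \eqref{eq:alpha-derivative-main} follow from finite-time convergence of the moments.

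Step~3 also needs repair. Your reference $u^0(t,x-\beta)$ has first variation $-U_x>0$ on the receiving side, which is an advance, not a delay. It also corresponds to a shift of $\beta$, inconsistent with the ``drift of $1/2$'' you claim. The correct reference is $u^0(t,x+\beta/2)$, with first variation $\frac12U_x$; its leading-edge amplitude $-\alpha_0/2$ produces the $1/4$.

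The strict excess cannot be seen from the receiving side alone. There $w(t,0^+)=\frac12U_x(t,0)-R(t,0)$, where $R=-\frac12\bigl(w(t,-x)+w(t,x)\bigr)$ is the response of the folded mean. The positivity $R(t,0)>0$ comes from the donor side through the cooperative system \eqref{small:cooperative-response}. Its coupling $B=\frac12\bigl(f'(q)-f'(p)\bigr)$ is nonnegative by concavity and $p>q$, and positive somewhere because $f'(1)<f'(0)$.

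This gives $Z=\frac12U_x-w>0$, but the strong maximum principle yields only pointwise positivity. A strictly positive leading-edge amplitude $\zeta$ requires a quantitative lower bound such as Lemma~\ref{lem:positive-tail}.
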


The coefficient $K$ depends on $f$ and $u_0$ through the homogeneous solution and its linearized response. The finite-time first variation exists under Assumptions~\ref{hyp:f}--\ref{hyp:data}; concavity is used here to pass to the far-field amplitude response and prove $K>1/4$. Proposition~\ref{small:strict-coefficient} identifies the positive correction to $1/4$ through the response of the mean of the reflected donor-side and receiving-side frequencies.

For the strong-barrier limit, the relevant boundary history comes from the closed donor-side evolution. Reflecting this problem onto the positive half-line, so that $x>0$ corresponds to the original location $-x$, let $p_N$ solve
\begin{equation}\label{eq:donor}
 \begin{cases}
 (p_N)_t=(p_N)_{xx}+f(p_N),&t>0,\quad x>0,\\
 (p_N)_x(t,0)=0,&t>0,\\
 p_N(0,x)=u_0(-x),&x>0.
 \end{cases}
\end{equation}
The Neumann condition expresses the absence of exchange through the closed barrier. We denote the allele frequency at this boundary by $h(t)$ and define its exponentially weighted time integral by
\begin{equation}\label{eq:donor-mass}
 h(t)=p_N(t,0),\qquad I=\int_0^\infty e^{-2t}h(t)\,dt.
\end{equation}
Under Assumptions~\ref{hyp:f}--\ref{hyp:data}, the strong maximum principle gives $0<h(t)<1$ for $t>0$, and hence $0<I<1/2$. The exponential weight in $I$ emphasizes the earlier part of this boundary history.

\begin{theorem}\label{thm:large-beta}
Suppose that Assumptions~\ref{hyp:f}--\ref{hyp:data} hold. As $\beta\to\infty$, the leading-edge amplitude satisfies
\begin{equation}\label{eq:large-amplitude}
 \alpha_\beta=\frac{I}{\sqrt\pi}\frac{\log\beta}{\beta}(1+o(1)).
\end{equation}
Consequently, the far-field delay satisfies
\begin{equation}\label{eq:large-delay}
 \tau_\beta=\frac12\log\beta-\frac12\log\log\beta
 +\frac12\log\frac{\sqrt\pi\alpha_0}{I}+o(1).
\end{equation}
\end{theorem}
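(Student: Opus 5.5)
The delay formula \eqref{eq:large-delay} follows at once from \eqref{eq:large-amplitude} and $\tau_\beta=\frac12\log(\alpha_0/\alpha_\beta)$ in Theorem~\ref{thm:phase}. The substance is therefore the amplitude asymptotics \eqref{eq:large-amplitude}, which I would prove in three steps.

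\textbf{Step 1: the transmitted flux.} For large $\beta$ the donor side is close to the closed Neumann evolution, $u^\beta(t,-x)=p_N(t,x)+O(\beta^{-1})$ uniformly on compact time intervals. The receiving trace $u^\beta(t,0^+)$ is small, of order $\beta^{-1}$ times a slowly varying factor, so the jump is $[u]\approx -h(t)$. The interface law then gives the flux into the receiving side,
\[
 -u_x(t,0^+)=-\frac{[u](t)}{\beta}\approx\frac{h(t)}{\beta}.
\]
The correction comes from the trace $u(t,0^+)$. On the receiving side the linearized problem is a heat equation with growth and a Robin-type condition $u(0^+)=u(0^-)-\beta u_x$. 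I would write the receiving solution via the Robin/half-line Green's function for $v=e^{-t}u$, with boundary input $h(s)e^{s}$. The Robin kernel with parameter $\beta$ has Laplace symbol $1/(1+\beta\sqrt{\lambda})$. Composed with the far-field leading edge at speed $2$, this produces the weight $e^{-2s}$ and the factor
\[
 \int \frac{ds}{\sqrt{\pi}\,(\ldots)},
\]
and the logarithm arises from the crossover time scale $s\sim\beta^2$. Concretely, I would check that the effective source seen by the Bramson leading edge is $\frac{1}{\beta}\int_0^{T} e^{-2s}h(s)\,ds$ integrated against a kernel whose time integral up to $\beta^2$ behaves like $\frac{\log\beta}{\sqrt\pi}$.

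\textbf{Step 2: matching the small input to the phase.} The receiving solution starts essentially from a small input of size $\epsilon=\log\beta/\beta$ times $I$. I would use the leading-edge amplitude representation from the proof of Theorem~\ref{thm:phase}: $\alpha_\beta$ equals the limit of the linear leading-edge mass $\lim e^{t^\theta}u(t,m(t)+t^\theta)/t^\theta$. By the Mytnik--Roquejoffre--Ryzhik small-data comparison, for inputs of small mass the nonlinearity changes the amplitude only by a factor $1+o(1)$. Hence $\alpha_\beta$ equals the linear first moment of the effective input, $\frac{1}{\sqrt\pi}\cdot\frac{I\log\beta}{\beta}(1+o(1))$. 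This must be supplemented by parameter-uniform moment bounds: upper and lower supersolution and subsolution bounds for the linearized receiving problem with boundary flux $\le h/\beta$, uniform in $\beta$. These justify exchanging $t\to\infty$ with $\beta\to\infty$.

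\textbf{Step 3: the main obstacle.} The hardest part will be controlling the long-time boundary history. For $s\gg1$ the donor side invades and $h\to1$, but the weight $e^{-2s}$ should kill that contribution. The issue is the logarithm: contributions from times $1\ll s\lesssim\beta^2$, where the receiving trace accumulates and the Robin condition transitions from Neumann-like to Dirichlet-like behaviour, must produce exactly $\log\beta/\sqrt\pi$ and nothing else. I would first compute the model case explicitly, with $f$ linear on the receiving side and exact Robin kernel $\beta^{-1}e^{x/\beta+s/\beta^2}\erfc(\ldots)$. I would extract the constant there, and then treat nonlinear and donor-side errors as $o(\log\beta/\beta)$ perturbations by comparison.
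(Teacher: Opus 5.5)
Your proposal has a genuine gap at its core. It misidentifies where the factor $\log\beta$ comes from, and as a result Steps~1--3 do not fit together.

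\textbf{Where the logarithm comes from.} With $\varepsilon=\beta^{-1}$, the weight $e^{-2s}$ means the input that matters is $j_\beta(s)\approx\varepsilon h(s)$ at times $s=O(1)$. The Robin crossover at $s\sim\beta^2$ plays no role. The receiving side reaches order one near the interface on the time scale $\log\beta$, and, as your Step~3 itself says, $e^{-2s}$ kills everything at $s\gg1$. That remark contradicts your claim that the range $1\ll s\lesssim\beta^2$ produces the logarithm.

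More fundamentally, no linear computation yields a finite Bramson amplitude. For $q_t=q_{xx}+q$ with incoming flux $\varepsilon h$, the solution $2\varepsilon\int_0^te^{t-s}G_{t-s}(x)h(s)\,ds$ behaves like $\frac{\varepsilon I}{\sqrt\pi}\,t\,e^{-z}$ at $x=m(t)+z$. It grows linearly in $t$ instead of approaching $\alpha z e^{-z}$, so your ``model case'' with linear $f$ has no constant to extract.

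The logarithm is a nonlinear lever arm. It is the distance, of order $\log(1/\varepsilon)=\log\beta$, between the input and the saturated region behind the emerging front. It is exactly the factor $\log\delta^{-1}$ in the Mytnik--Roquejoffre--Ryzhik law $\alpha_{\rm log}(\delta\chi)\sim\frac{\delta\log\delta^{-1}}{2\sqrt\pi}\int_\R e^x\chi$. Your Step~2 reads that law backwards. It does not say the nonlinearity changes the amplitude by a factor $1+o(1)$; it says the nonlinearity supplies the whole logarithm. Feeding it an effective input that already carries $\log\beta$ would give an amplitude of order $(\log\beta)^2/\beta$.

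\textbf{What the paper does.} On a fixed interval, $\varepsilon^{-1}q_\beta(T,\cdot)\to v_T$ uniformly, with $e^{-2T}\int_\R e^xv_T=2I_T$ after even extension.

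For the lower bound, a compactly supported $\varepsilon\chi\le q_\beta(T,\cdot)$ seeds the subsolution $w_t=w_{xx}+w-C_fw^2$. The MRR law with $\delta=C_f\varepsilon$ and a comparison of wave tails give $\alpha_\beta\ge e^{-2T}C_f^{-1}\alpha_{\rm log}(C_f\varepsilon\chi)$. One then lets $\varepsilon\downarrow0$, then $\chi\uparrow v_T$, then $T\to\infty$.

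The upper bound is entirely missing from your plan, and the MRR law cannot provide it, because the input is sustained for all time rather than given as initial data. The paper extends $q_\beta$ evenly, so the flux becomes a point source $2j_\beta\delta_0$. It then moves the self-similar frame back by $L=\log\beta$. This makes the rescaled trace at the cutoff equal to $(1+t)^{-1/2}q_\beta\le(1+t)^{-1/2}$, by the nonlinear bound $q_\beta\le1$, and this is integrable in $\sigma$ uniformly in $\beta$. The approximate adjoint weight $W=\eta+ke^{-\sigma/2}\psi$ then gives $\int_0^\infty\eta r\,d\eta\le2IL+C\log(2+L)+C$ uniformly in time. The source contributes through its height $L-\mu(t)$ above the cutoff, which is the same lever arm as in the MRR law. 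Fatou's lemma yields $2\sqrt\pi\,\beta\alpha_\beta\le2IL+o(L)$.

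Sub- and supersolution bounds for the \emph{linearized} receiving problem, uniform in $\beta$, cannot replace this step, since the linearized problem has no finite amplitude to bound.
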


Expansion~\eqref{eq:large-delay} specifies the log-log correction and the constant accompanying the leading logarithmic growth. The constant retains information about both the homogeneous reference solution, through $\alpha_0$, and the closed donor-side boundary history, through $I$. Concavity is not required.

\begin{remark}
For each finite resistance $\beta$, the far-field limit $R\to\infty$ in~\eqref{eq:delay} is taken first to define $\tau_\beta$; Theorem~\ref{thm:large-beta} then describes its behavior as $\beta\to\infty$. If the barrier is made completely impermeable, the receiving side remains at zero under Assumption~\ref{hyp:data}, so no positive level is reached there in finite time. Thus, the expansion concerns the far-field delay for finite resistances and does not directly give the establishment time at a fixed spatial point.
\end{remark}

\begin{remark}
In the deterministic additive-selection model of~\cite{PB1997}, the barrier strength is $B=[p]/p_x$; after scaling, this gives the resistance in~\eqref{eq:interface}. Their Appendix~A estimates establishment at the barrier from linearized growth. The numerical delay in their Figure~1A is computed from the difference in total allele numbers between solutions with and without a barrier after the wave has crossed, while Figure~1B follows the evolving difference in wave position. Definition~\eqref{eq:delay} instead specifies a limit of distant first-arrival times; Theorem~\ref{thm:phase} identifies this limit with the asymptotic phase difference divided by the wave speed.
\end{remark}

Three parameter-uniform estimates connect the finite-time dynamics to these asymptotic results. The absolute first-moment bound~\eqref{front:uniform-moment} transfers continuity to the limiting amplitude; positivity is proved separately for each finite resistance. The difference-quotient estimate~\eqref{small:moment-uniform-error} justifies differentiation at zero resistance. For large resistance, the shifted adjoint-moment bound~\eqref{large:uniform-first-moment} controls the sustained input from above, while a comparison based on the small-data amplitude law of Mytnik--Roquejoffre--Ryzhik~\cite{MRR2022} gives the matching lower bound.

We first establish the transmission heat kernel and comparison properties, then derive the leading-edge estimates and match them to the interior wave profile to prove phase convergence. The remaining sections treat continuity and strict monotonicity in the resistance, followed by the weak- and strong-barrier limits.

\section{Transmission heat kernel, well-posedness, and reflection comparison}\label{sec:preliminaries}

This section establishes well-posedness and comparison for the transmission problem, together with the reflection properties and receiving-side estimates needed below.

Let $G_t(x)=(4\pi t)^{-1/2}e^{-x^2/(4t)}$ be the full-line heat kernel. An explicit kernel for diffusion across a semipermeable interface is given by Bressloff~\cite[Section~II.B, equations~(2.28a)--(2.28b)]{Bressloff2024}. Taking $D=1$ and $\kappa_0=2/\beta$ in that formula gives the following equivalent integral representation for our interface law. The associated Markov semigroup and its generator are characterized by Lejay~\cite[Section~4, Propositions~1--2]{Lejay2016}; his normalization corresponds to $\kappa=2/\beta$ and the time change $t\mapsto2t$. For $\beta>0$, $t>0$, and $x,y\ne0$, the kernel is
\begin{equation}\label{eq:barrier-kernel}
 \begin{split}
 P_\beta(t,x,y)={}&G_t(x-y)
 +\sgn(x)\sgn(y)\left[
 G_t(|x|+|y|)
 -\int_0^\infty e^{-r}
 G_t(|x|+|y|+\beta r/2)\,dr\right].
 \end{split}
\end{equation}
Values at the interface are understood as one-sided limits. For $\beta=0$, we set $P_0(t,x,y)=G_t(x-y)$.

For completeness, write
\[
 H_\beta(t,a)=\int_0^\infty e^{-r}G_t(a+\beta r/2)\,dr,
 \qquad a\ge0.
\]
Completing the square yields
\[
 H_\beta(t,a)=\frac1\beta
 e^{2a/\beta+4t/\beta^2}
 \erfc\!\left(\frac{a}{2\sqrt t}+\frac{2\sqrt t}{\beta}\right),
\]
recovering the formula in~\cite{Bressloff2024}, with reflection symmetry covering negative source points. Integration by parts also gives
\[
 \partial_aH_\beta(t,a)=\frac2\beta\bigl(H_\beta(t,a)-G_t(a)\bigr),
\]
which verifies both interface conditions in the $x$ variable in \eqref{eq:barrier-kernel}. The heat equation away from the interface follows by differentiation. Since $G_t$ is decreasing on $[0,\infty)$, the correction to $G_t(x-y)$ is nonnegative when $x$ and $y$ have the same sign; when they have opposite signs, $P_\beta(t,x,y)=H_\beta(t,|x|+|y|)>0$. Thus the kernel is nonnegative. Moreover, the correction is odd in $y$ for each fixed $x$, so
\[
 \int_\R P_\beta(t,x,y)\,dy=1.
\]
The symmetry $P_\beta(t,x,y)=P_\beta(t,y,x)$ also gives conservation of mass. Reflection decomposes the linear transmission problem into half-line Neumann and Robin problems (Neumann and Dirichlet when $\beta=0$), whose uniqueness gives the semigroup property. Together with positivity and preservation of constants, this shows that the kernels define a positive contraction semigroup on bounded measurable functions, consistent with the characterization in~\cite{Lejay2016}.

\begin{proposition}\label{prop:well-posedness}
Under Assumption~\ref{hyp:f}, let $u_0$ be measurable with $0\le u_0\le1$, and fix $0\le\beta<\infty$. There is a unique global bounded mild solution $u^\beta$, characterized by
\begin{equation}\label{eq:mild}
 u^\beta(t,x)=\int_\R P_\beta(t,x,y)u_0(y)\,dy
 +\int_0^t\int_\R P_\beta(t-s,x,y)f(u^\beta(s,y))\,dy\,ds.
\end{equation}
It satisfies $0\le u^\beta\le1$ and is classical on each side for $t>0$, with one-sided traces satisfying \eqref{eq:interface}. At $\beta=0$, it is the classical homogeneous full-line solution for positive times. As $t\downarrow0$, $u^\beta(t,\cdot)\to u_0$ locally in $L^p(\R)$ for every $1\le p<\infty$, and pointwise at every continuity point of $u_0$ away from the interface.

For the same fixed $\beta$ and reaction $f$, comparison holds for any two measurable initial data taking values in $[0,1]$: if $u_0\le v_0$ almost everywhere, then their solutions satisfy $u^\beta(t,x)\le v^\beta(t,x)$ for $t>0$, including the corresponding one-sided interface traces.
\end{proposition}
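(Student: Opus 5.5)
The plan is to construct the solution by a Picard iteration on the mild formulation~\eqref{eq:mild} and to prove comparison through the positivity of the kernel $P_\beta$. First extend $f$ to a globally Lipschitz function on $\R$, for instance $f(s)=0$ for $s\notin[0,1]$, and write $L$ for its Lipschitz constant. On $X_T=L^\infty((0,T)\times\R)$ define
\[
\Phi[u](t,x)=\int_\R P_\beta(t,x,y)u_0(y)\,dy+\int_0^t\int_\R P_\beta(t-s,x,y)f(u(s,y))\,dy\,ds .
\]
Since $P_\beta\ge0$ and $\int_\R P_\beta\,dy=1$, we get $\|\Phi[u]-\Phi[v]\|_{X_T}\le LT\|u-v\|_{X_T}$. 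Hence $\Phi$ is a contraction for small $T$. The step size does not depend on the data, so the local solutions concatenate through the semigroup property into a unique global bounded mild solution. Uniqueness among bounded mild solutions follows from the same estimate combined with Gronwall's inequality.

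For the bounds and for comparison, I would use the standard shift trick. Set $\tilde f(s)=f(s)+Ls$, which is nondecreasing, and rewrite the mild formula with the kernel $e^{-Lt}P_\beta$ and reaction $\tilde f$. The Picard map is then order preserving, because both the kernel and $\tilde f$ are monotone. Starting the iteration from ordered data and ordered initial guesses therefore keeps the iterates ordered, and the limits inherit the order. Applied to the constant solutions $0$ and $1$, which are fixed points because $f(0)=f(1)=0$ and $P_\beta$ preserves constants, this gives $0\le u^\beta\le1$. Applied to two data $u_0\le v_0$, it gives $u^\beta\le v^\beta$ pointwise for $t>0$. The one-sided traces then follow by passing to the limit, once continuity up to the interface on each side is known.

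For regularity, I would use the explicit form~\eqref{eq:barrier-kernel} together with the formula for $H_\beta$. On each side and for $t>0$, the kernel is smooth in $(t,x)$ up to $x=0^\pm$, and its derivatives obey Gaussian bounds of the form $|\partial_xP_\beta|\le Ct^{-1}e^{-c(x-y)^2/t}$ locally uniformly. These bounds hold because $H_\beta(t,a)\le G_t(a)$ and $\partial_aH_\beta=\tfrac2\beta(H_\beta-G_t)$. The Duhamel term is then $C^{1}$ in $x$ up to the interface on each side. A standard bootstrap (H\"older continuity in time of $f(u)$, followed by parabolic Schauder theory on each side) makes $u$ classical for $t>0$ away from $x=0$. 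The interface conditions~\eqref{eq:interface} hold for the kernel in the $x$ variable, as verified above, and they are preserved under integration against bounded sources because the derivative bounds are integrable in $s$ near $s=t$. I expect this to be the main technical obstacle: justifying differentiation under the integral sign at the interface, where the singularity $(t-s)^{-1/2}$ of $\partial_xP_\beta$ has to be checked to be integrable uniformly as $x\to0^\pm$.

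At $\beta=0$ the kernel reduces to $G_t$, and classical theory gives the homogeneous solution. For the initial trace, the Duhamel term is $O(t)$. For the linear part, the decomposition $P_\beta=G_t+\text{correction}$ helps: the $G_t$ part converges to $u_0$ locally in $L^p$ and at every Lebesgue point, in particular at every continuity point. The correction is bounded by $2G_t(|x|+|y|)$, whose integral against $u_0$ tends to $0$ for $x\ne0$ fixed and locally in $L^p$.
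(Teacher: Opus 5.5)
Your proposal is correct and follows essentially the same route as the paper. You use a globally Lipschitz extension of $f$ with a short-time contraction for the mild formula, then the shift $f(s)+Ls$ with the damped kernel $e^{-Lt}P_\beta$ so that Picard iteration preserves order (which gives both $0\le u^\beta\le1$ and comparison), and finally kernel smoothing with parabolic regularity for the traces and the initial approximation. The only minor difference is that the paper gets regularity up to the interface by folding into the Neumann and Robin half-line problems, whereas you read it off directly from the derivative bounds of the explicit kernel, which works equally well.
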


\begin{proof}
Extend $f$ to a globally Lipschitz function on $\R$. The contraction property of the linear semigroup makes the right-hand side of \eqref{eq:mild} a contraction on bounded measurable functions on a sufficiently short time interval. This gives local existence and uniqueness without requiring continuity of the initial data.

To obtain comparison, choose a Lipschitz constant $L$ for the extension, so that $s\mapsto f(s)+Ls$ is nondecreasing. In the equivalent integral formulation with the damped kernel $e^{-Lt}P_\beta(t,x,y)$ and reaction $f(u)+Lu$, positivity of the kernel and Picard iteration preserve the order of the initial data. Comparison with the constant solutions $0$ and $1$ gives $0\le u^\beta\le1$, and hence global continuation and independence of the chosen extension of $f$.

Kernel smoothing, followed by parabolic regularity for the folded Neumann
and Robin problems, gives positive-time classical regularity and the
transmission traces; see~\cite{Lieberman1996} for the scalar boundary
regularity tools. The explicit kernel supplies the interface structure used here. Finally, the kernel has the initial approximation property locally in $L^p$ and at continuity points away from the interface, while the reaction integral in \eqref{eq:mild} is $O(t)$ in $L^\infty$. This proves the asserted initial convergence.
\end{proof}

We use the scalar parabolic maximum principle, its strong form, and the
boundary point lemma as in~\cite[Chapter~II]{Lieberman1996}.
We now reflect the donor side onto the positive half-line, so that both sides can be compared on the same domain. The average and difference of the reflected profiles separate the interface law into Neumann and Robin conditions. For $x>0$, set
\begin{equation}\label{eq:fold}
 p_\beta(t,x)=u^\beta(t,-x),\quad q_\beta(t,x)=u^\beta(t,x),
 \quad S_\beta=\frac{p_\beta+q_\beta}{2},\quad
 D_\beta=\frac{p_\beta-q_\beta}{2}.
\end{equation}
For $\beta>0$, the interface conditions give
\begin{equation}\label{eq:fold-bc}
 (S_\beta)_x(t,0)=0,\qquad
 (D_\beta)_x(t,0)=\frac2\beta D_\beta(t,0).
\end{equation}
At zero resistance, $(S_0)_x(t,0)=0$ and $D_0(t,0)=0$. For every finite $\beta\ge0$, the nonlinear equations become
\begin{equation}\label{eq:fold-pde}
 \begin{split}
 (S_\beta)_t&=(S_\beta)_{xx}
 +\frac{f(S_\beta+D_\beta)+f(S_\beta-D_\beta)}2,\\
 (D_\beta)_t&=(D_\beta)_{xx}
 +\frac{f(S_\beta+D_\beta)-f(S_\beta-D_\beta)}2.
 \end{split}
\end{equation}
\begin{lemma}\label{lem:reflection-comparison}
Under Assumptions~\ref{hyp:f}--\ref{hyp:data}, fix $0\le\beta<\infty$. Then $p_\beta(t,x)>q_\beta(t,x)$ for $t>0$ and $x>0$. For $\beta>0$, this inequality also holds at $x=0$, and
\begin{equation}\label{eq:reflection}
 j_\beta(t):=-q_{\beta,x}(t,0)
 =\frac{p_\beta(t,0)-q_\beta(t,0)}\beta>0,
 \qquad t>0.
\end{equation}
Here $j_\beta$ is oriented from the donor side to the receiving side, so $[u^\beta](t)=-\beta j_\beta(t)$ for $\beta>0$. For $\beta=0$, the interface flux satisfies $j_0(t):=-u_x^0(t,0)>0$. Moreover, for every finite $\beta\ge0$,
\begin{equation}\label{eq:right-monotone}
 q_{\beta,x}(t,x)<0\qquad(t>0,x>0).
\end{equation}
\end{lemma}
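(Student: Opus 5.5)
The plan is to work with the folded difference $D_\beta=(p_\beta-q_\beta)/2$ and treat \eqref{eq:fold-pde} as a linear parabolic equation for it. Write
\[
 \frac{f(S_\beta+D_\beta)-f(S_\beta-D_\beta)}2=c_\beta(t,x)D_\beta,\qquad
 c_\beta=\int_0^1 f'(S_\beta+(2\lambda-1)D_\beta)\,d\lambda .
\]
Since $f\in C^2$ and $0\le u^\beta\le1$, the coefficient $c_\beta$ is bounded. Thus $D_\beta$ solves $D_t=D_{xx}+c_\beta D$ on $x>0$, with a boundary condition at $x=0$ and initial data
\[
 D_\beta(0,x)=\tfrac12\bigl(u_0(-x)-u_0(x)\bigr)=\tfrac12u_0(-x)\ge0 .
\]
This initial data is not identically zero, because $u_0=1$ for $x\le a$.

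\emph{Positivity of $D_\beta$.} For $\beta=0$ the boundary condition is Dirichlet, $D_0(t,0)=0$. For $\beta>0$ it is the Robin condition $(D_\beta)_x=\tfrac2\beta D_\beta$ from \eqref{eq:fold-bc}, whose coefficient has the sign compatible with the maximum principle. To see this, note that at a boundary minimum with $D_\beta<0$ one would need $D_x\ge0$, while the Robin condition forces $D_x<0$. I would apply the weak maximum principle to $e^{-Lt}D_\beta$, with $L\ge\sup|c_\beta|$, to get $D_\beta\ge0$. The strong maximum principle then gives $D_\beta>0$ for $t>0$, $x>0$.

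At $x=0$ with $\beta>0$, suppose $D_\beta(t_0,0)=0$. Then the Hopf boundary point lemma gives $(D_\beta)_x(t_0,0)>0$, whereas the Robin condition gives $(D_\beta)_x(t_0,0)=0$, a contradiction. Hence $p_\beta(t,0)>q_\beta(t,0)$. Formula \eqref{eq:reflection} is then just the interface law \eqref{eq:interface} rewritten, with $[u^\beta]=q_\beta(t,0)-p_\beta(t,0)=\beta u_x$, and $j_\beta>0$ follows.

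For $\beta=0$, I would use Hopf at the Dirichlet boundary, where $D_0=0<D_0$ in the interior, to get $(D_0)_x(t,0)>0$. Continuity of $u^0_x$ at the origin gives $(D_0)_x(t,0)=-u^0_x(t,0)$. Therefore $j_0=-u^0_x(t,0)>0$.

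\emph{Monotonicity \eqref{eq:right-monotone}.} Differentiating the equation for $q_\beta$, the function $v=q_{\beta,x}$ solves $v_t=v_{xx}+f'(q_\beta)v$ on $x>0$. Its boundary values are $v(t,0)=-j_\beta(t)<0$ for $\beta>0$, and $v(t,0)=u^0_x(t,0)<0$ for $\beta=0$. Moreover $v$ decays at $x=+\infty$. The main obstacle is the initial data: $q_\beta(0,\cdot)=0$ on $x>0$, so formally $v(0,\cdot)=0$, but $v$ is only defined for $t>0$ and the corner $t=0$, $x=0$ needs care.

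I would handle this on $[\varepsilon,T]$. On the receiving side near $t=0$, $q_\beta$ is given by the mild formula, and the contributions from the donor data supported in $x\le b<0$ are exponentially small, of order $e^{-b^2/(Ct)}$. So $|v(\varepsilon,x)|$ is tiny for $\varepsilon$ small. The sign of the main term can be read off from the explicit kernel \eqref{eq:barrier-kernel}. For $x>0$ and $y<0$, $P_\beta(t,x,y)=H_\beta(t,x+|y|)$, which is decreasing in $x$, so the linear part of $q_\beta$ has negative $x$-derivative. The reaction term is $O(t)$, which adds a negligible perturbation. Alternatively, and more cleanly, I would compare $q_\beta(t,x+h)$ with $q_\beta(t,x)$ for $h>0$. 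Both solve the same equation on $x>0$ with initial data $0$. The shifted function has boundary value $q_\beta(t,h)$, which should be compared with $q_\beta(t,0)$. Running this argument requires controlling the boundary ordering, so the derivative argument is cleaner.

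Having $v\le0$ with a $v(\varepsilon,\cdot)$ that is vanishingly small, I would apply the maximum principle for $v$ with negative boundary data on $[\varepsilon,T]$ and let $\varepsilon\downarrow0$ to obtain $v\le0$. The strong maximum principle then gives $v<0$, since $v\not\equiv0$ because its boundary values are strictly negative. Making the small-time estimate of $v(\varepsilon,\cdot)$ rigorous uniformly in $x$, including near $x=0$, is the technical heart of the argument. I would obtain it by differentiating the mild formula in $x$, using the explicit derivative identity for $H_\beta$ and the separation $|y|\ge|b|$ of the initial support.
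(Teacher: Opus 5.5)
Your proposal is correct and follows the paper's proof essentially step by step. Both arguments rest on the same steps:
\begin{enumerate}
\item positivity of the folded difference $D_\beta$ via the secant coefficient, Robin/Dirichlet comparison and the strong maximum principle;
\item the Hopf lemma at $x=0$, giving $D_\beta(t,0)>0$ for $\beta>0$ and $j_0>0$ for $\beta=0$;
\item monotonicity via $v=q_{\beta,x}$ with boundary value $-j_\beta<0$, comparison on $[\varepsilon,T]$, the limit $\varepsilon\downarrow0$, and the strong maximum principle.
\end{enumerate}
One detail should be stated precisely in the small-time estimate. The reaction term contributes $O(\sqrt t)$ to $q_{\beta,x}$, not $O(t)$; this follows from $\sup_{x>0}\int_\R|\partial_xP_\beta(t,x,y)|\,dy\le Ct^{-1/2}$. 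The initial term is $O(t^{-1/2}e^{-c/t})$, because the support of $u_0$ is separated from the interface.
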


\begin{proof}
Define
\[
 c_\beta(t,x)=\int_0^1 f'\!\left(q_\beta(t,x)
 +s\bigl(p_\beta(t,x)-q_\beta(t,x)\bigr)\right)\,ds.
\]
Then $(D_\beta)_t=(D_\beta)_{xx}+c_\beta D_\beta$, with
$|c_\beta|\le\|f'\|_{L^\infty([0,1])}$ and
$D_\beta(0,x)=u_0(-x)/2\ge0$ not identically zero. Robin comparison for $\beta>0$, or Dirichlet comparison for $\beta=0$, followed by the strong maximum principle gives $D_\beta(t,x)>0$ for $t>0$ and $x>0$. If $\beta>0$ and $D_\beta(t,0)=0$ at a positive time, the boundary Hopf lemma would give $(D_\beta)_x(t,0)>0$, contradicting \eqref{eq:fold-bc}. Thus $D_\beta(t,0)>0$, and the interface law yields \eqref{eq:reflection}. When $\beta=0$, the Hopf lemma instead gives
$j_0(t)=(D_0)_x(t,0)>0$.

For the receiving-side monotonicity, set $v=q_{\beta,x}$. At positive times,
\[
 v_t=v_{xx}+f'(q_\beta)v,\qquad x>0,
 \qquad v(t,0)=-j_\beta(t)<0.
\]
The zero initial condition for this derivative is justified directly from the kernel, even though $u_0$ is only measurable. Indeed, differentiation of \eqref{eq:barrier-kernel} gives
$\sup_{x>0}\int_\R|\partial_xP_\beta(t,x,y)|\,dy\le Ct^{-1/2}$.
Since $u_0$ vanishes for $y\ge b$ with $b<0$, the initial term and the bounded reaction term in \eqref{eq:mild} therefore give
\[
 \|q_{\beta,x}(t,\cdot)\|_{L^\infty(0,\infty)}
 \le Ct^{-1/2}e^{-c/t}+C\sqrt t\longrightarrow0
 \qquad(t\downarrow0),
\]
where $c>0$. With $M=\|f'\|_{L^\infty([0,1])}$ and $v^+=\max\{v,0\}$, comparison on $[\varepsilon,T]\times(0,\infty)$ gives
\[
 v(t,x)\le e^{M(t-\varepsilon)}
 \|v(\varepsilon,\cdot)^+\|_{L^\infty(0,\infty)},
 \qquad \varepsilon\le t\le T.
\]
Letting $\varepsilon\downarrow0$ yields $v\le0$. The strong maximum principle and the strictly negative boundary values give $v<0$ in the interior. Neither concavity of $f$ nor monotonicity of $u_0$ is used.
\end{proof}

We conclude with estimates needed for the leading-edge analysis. The function
$W(t,x)=e^t\erfc(x/(2\sqrt t))$ satisfies $W_t=W_{xx}+W$ on $x>0$, has zero initial value there, and obeys $W(t,0)=e^t\ge q_\beta(t,0)$. Since $f(q_\beta)\le q_\beta$, half-line comparison yields the parameter-uniform Gaussian bound
\begin{equation}\label{eq:uniform-gaussian}
 q_\beta(t,x)\le e^t\erfc\!\left(\frac{x}{2\sqrt t}\right),\qquad t>0,\quad x>0.
\end{equation}
For $z\ge-t^\rho$ with $0<\rho<1/3$, write $x=m(t)+z=2t+z-\frac32\log t$. Then $x\ge t$ for all sufficiently large $t$. The bound $\erfc(r)\le e^{-r^2}/(\sqrt\pi r)$ for $r>0$ gives
\[
 e^t\erfc\!\left(\frac{x}{2\sqrt t}\right)
 \le Ct^{-1/2}e^{t-x^2/(4t)}
 =Ct e^{-z}\exp\!\left(-\frac{(z-\frac32\log t)^2}{4t}\right)
 \le Ct e^{-z}.
\]
Hence, throughout this range,
\begin{equation}\label{eq:uniform-leading-bound}
 q_\beta(t,m(t)+z)\le Ct e^{-z},
\end{equation}
where $C$ is independent of all finite $\beta\ge0$. To separate the linear growth from the nonlinear remainder in the next section, set $g(s)=s-f(s)$. Assumption~\ref{hyp:f}, in particular $f'(0)=1$, gives
\begin{equation}\label{eq:g}
 0\le g(s)\le C s^2,\qquad |g'(s)|\le Cs\qquad(0\le s\le1).
\end{equation}
\section{Uniform leading-edge estimates and amplitudes}\label{front:section}

The truncated self-similar approach in~\cite[Sections~3 and~5]{NRR2017}
motivates the linear tool developed here. That paper treats the logistic
equation; below we prove the kernel estimates and parameter-uniform bounds
needed for our general KPP reaction, including the signed-solution version. The estimates for the nonlinear solution use only Assumptions~\ref{hyp:f}--\ref{hyp:data}, through \eqref{eq:uniform-leading-bound} and \eqref{eq:g}. The signed-solution corollary holds under its explicit coefficient assumptions; concavity is used later to verify those assumptions for the weak-barrier difference quotient.

\subsection{The self-similar kernel on the half-line}
The exponential weighting and self-similar change of variables used at the leading edge produce the operator below. We first establish its half-line kernel estimates, before applying them to the nonlinear solution in the next subsection. Let
\[
 \mathcal L=-\partial_{yy}-\frac y2\partial_y-1,
 \qquad \phi(y)=ye^{-y^2/4},\qquad y>0.
\]
Here $r>0$ denotes an increment of self-similar time. With homogeneous Dirichlet boundary conditions, the kernel of $S(r)=e^{-r\mathcal L}$ follows from the half-line heat kernel by the spatial scaling $y\mapsto e^{r/2}y$, the heat-time change $e^r-1$, and multiplication by $e^r$:
\begin{equation}\label{front:kernel}
 K(r,y,s)=\frac{e^r}{\sqrt{4\pi(e^r-1)}}
 \left\{
 e^{-(e^{r/2}y-s)^2/[4(e^r-1)]}
 -e^{-(e^{r/2}y+s)^2/[4(e^r-1)]}
 \right\}.
\end{equation}
The Poisson kernel for boundary data is
\begin{equation}\label{front:poisson}
 B(r,y)=\partial_sK(r,y,0)
 =\frac{y}{2\sqrt\pi(1-e^{-r})^{3/2}}
 e^{-y^2/[4(1-e^{-r})]}.
\end{equation}
For a continuous boundary value $b$, its contribution to the solution is
\[
 \mathcal B b(\sigma,y)=\int_{\sigma_0}^{\sigma}
 B(\sigma-s,y)b(s)\,ds,\qquad y>0.
\]
This integral is defined in the interior and extended to $y=0$ by its trace, which equals $b(\sigma)$ for $\sigma>\sigma_0$. Substitution of $y=0$ under the integral is not valid, despite $B(r,0)=0$ for each $r>0$.

Write
\[
 X_\theta=\{h\in C([0,\infty)):
       \|h\|_\theta:=\sup_{y\ge0}e^{\theta y^2}|h(y)|<\infty\},
 \qquad 0<\theta<\frac14,
\]
and define the projection
\begin{equation}\label{front:projection}
 \Pi h=\frac{\phi}{2\sqrt\pi}\int_0^\infty sh(s)\,ds.
\end{equation}
This is the projection onto the zero mode, since
$\mathcal L\phi=0$ and $\int_0^\infty y\phi(y)\,dy=2\sqrt\pi$.
Direct integration of the kernel gives
\[
 \int_0^\infty yK(r,y,s)\,dy=s,\qquad
 \int_0^\infty yB(r,y)\,dy=1.
\]
Thus the homogeneous evolution preserves the first moment, $S(r)\Pi=\Pi S(r)=\Pi$, while the Poisson kernel transmits boundary input into that moment.
The kernel \eqref{front:kernel} gives directly
\begin{align}
 \|S(r)h\|_\theta&\le C_\theta\|h\|_\theta,
       &&r>0,\label{front:kernel-bounded}\\
 \|(S(r)-\Pi)h\|_\theta&\le C_\theta e^{-r}\|h\|_\theta,
       &&r\ge1,\label{front:kernel-gap}\\
 \|S(r)\partial_yh\|_\theta
       &\le C_\theta(1+r^{-1/2})\|h\|_\theta,
       &&r>0.\label{front:kernel-derivative}
\end{align}
In the last estimate, the derivative is interpreted as
$S(r)\partial_yh=-\int_0^\infty\partial_sK(r,\cdot,s)h(s)\,ds$;
the boundary term vanishes because $K(r,y,0)=0$. Its stable component satisfies
\begin{equation}\label{front:kernel-derivative-gap}
 \|(I-\Pi)S(r)\partial_yh\|_\theta
 \le C_\theta e^{-r}\|h\|_\theta,\qquad r\ge1.
\end{equation}
These estimates follow without abstract spectral theory. Writing the kernel as
\[
 \frac{e^{r/2}}{\sqrt\pi\sqrt{1-e^{-r}}}
 e^{-[y^2+e^{-r}s^2]/[4(1-e^{-r})]}
 \sinh\!\left(\frac{e^{-r/2}ys}{2(1-e^{-r})}\right),
\]
we extract $ys e^{-y^2/4}/(2\sqrt\pi)$ for $r\ge1$ and estimate the
remainder by Gaussian integration. More precisely, if $R_r(y,s)$ denotes the kernel minus this leading term, then
\[
 \sup_{y\ge0}e^{\theta y^2}\int_0^\infty
 \bigl(|R_r(y,s)|+|\partial_sR_r(y,s)|\bigr)e^{-\theta s^2}\,ds
 \le C_\theta e^{-r},\qquad r\ge1.
\]
This yields both \eqref{front:kernel-gap} and \eqref{front:kernel-derivative-gap}.
For $0<r\le1$, we use Gaussian convolution and its first derivative.
Similarly,
\begin{equation}\label{front:poisson-estimates}
 \left\|B(r,\cdot)-\frac{\phi}{2\sqrt\pi}\right\|_\theta
       \le C_\theta e^{-r}\quad(r\ge1),\qquad
 \sup_{y>0}e^{\theta y^2}
 \left|\int_0^1 B(r,y)b(r)\,dr\right|
       \le C_\theta\|b\|_{L^\infty(0,1)}.
\end{equation}
The second estimate holds for bounded measurable $b$ and follows from the heat Poisson integral over the most recent unit interval; it does not assert a continuous boundary trace for such data. It does not require the instantaneous norm of $B$ to be integrable:
that norm is of order $r^{-1}$ as $r\downarrow0$, so time integration must precede
the spatial supremum.

We now apply these estimates to truncated leading-edge equations with decaying drift, potential, and boundary input. The first part supplies a Gaussian bound and boundary control for a comparison function; the second identifies the limiting amplitude for a possibly signed solution. We give the proof for the precise bounds used here.

\begin{lemma}\label{lem:linear-front}
Fix $0<\rho<1/2$, and set
\[
 a_0=\frac12-\rho,
 \qquad d(\sigma)=e^{-a_0\sigma},
 \qquad \theta_0(\sigma)=\rho d(\sigma)+\frac32e^{-\sigma/2}.
\]
Let $\sigma_0\ge0$, and consider solutions for $\sigma\ge\sigma_0$, $y\ge0$, continuous up to the initial and lateral boundaries and classical in the interior. The initial and boundary values are compatible at $(\sigma_0,0)$, and the solutions are bounded in the respective weighted spaces on every finite time interval.
\begin{enumerate}
\item Suppose that $Q$ solves
\[
 Q_\sigma+\mathcal LQ+\theta_0Q_y=0,\qquad
 Q(\sigma,0)=h(\sigma),
\]
with $Q(\sigma_0,\cdot)\in X_{1/5}$, $h\in C^1([\sigma_0,\infty))$, and
$|h|+|h'|\le M\exp(-c e^{\rho\sigma})$ for fixed $M,c>0$.
If $\sigma_0$ is sufficiently large in terms of $\rho$, then
\begin{equation}\label{front:linear-upper}
 \sup_{\sigma\ge\sigma_0}\|Q(\sigma)\|_{1/5}\le C,
 \qquad
 \sup_{\sigma\ge\sigma_0+1}\sup_{0\le y\le1}|Q_y(\sigma,y)|\le C.
\end{equation}
If the initial and boundary data of $Q$ are nonnegative, then $Q\ge0$, and
$Q(\sigma,2d(\sigma))\le Cd(\sigma)$ for $\sigma\ge\sigma_0+1$.

\item Suppose that $P$, which may change sign, satisfies
\begin{equation}\label{front:linear-positive-cut}
 P_\sigma+\mathcal LP+\vartheta(\sigma)P_y+A(\sigma,y)P=0,
 \qquad P(\sigma,0)=b(\sigma),
\end{equation}
where $\vartheta$, $A$, and $b$ are continuous and, for fixed $M,c>0$,
\[
 |\vartheta(\sigma)|\le Md(\sigma),\quad
 |b(\sigma)|\le Md(\sigma),\quad
 \|A(\sigma,\cdot)\|_\infty
       \le M e^{M\sigma-c e^{\rho\sigma}},\quad
 \sup_{\sigma\ge\sigma_0}\|P(\sigma)\|_{1/6}\le M.
\]
Then there is a real number $\alpha$ such that
\begin{align}
 J(\sigma)&:=\int_0^\infty yP(\sigma,y)\,dy
       \longrightarrow2\sqrt\pi\alpha,
       & |J(\sigma)-2\sqrt\pi\alpha|&\le Cd(\sigma),
       \label{front:linear-moment}\\
 \|P(\sigma)-\alpha\phi\|_{1/6}&\le Cd(\sigma).
       \label{front:linear-profile}
\end{align}
\end{enumerate}
In part~(i), $C$ depends only on $\rho$, $M$, $c$, and an upper bound for $\|Q(\sigma_0)\|_{1/5}$. In part~(ii), it depends only on $\rho$, $\sigma_0$, $M$, and $c$; no derivative bound for $b$ enters the estimates. Thus families with a fixed starting time and common bounds have common constants.
\end{lemma}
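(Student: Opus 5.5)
Both parts rest on Duhamel's formula in self-similar time. The solution is split into three pieces: the semigroup $S(\sigma-\sigma_0)$ applied to the initial datum, the Poisson contribution $\mathcal B$ of the boundary value, and a source term. For part~(i), set $\tilde Q=Q-\chi(y)h(\sigma)$ with a smooth cutoff $\chi$ equal to $1$ near $0$. Then $\tilde Q$ has zero Dirichlet data and solves
\[
\tilde Q_\sigma+\mathcal L\tilde Q+\theta_0\tilde Q_y=F,\qquad
F=-\chi h'-h\bigl(\mathcal L\chi+\theta_0\chi'\bigr).
\]
The forcing $F$ is compactly supported and doubly exponentially small, so it is harmless in $X_{1/5}$. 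Treat $-\theta_0\tilde Q_y$ as a source, so that
\[
\tilde Q(\sigma)=S(\sigma-\sigma_0)\tilde Q(\sigma_0)-\int_{\sigma_0}^\sigma S(\sigma-s)\bigl[\theta_0(s)\partial_y\tilde Q(s)-F(s)\bigr]\,ds .
\]
Estimates \eqref{front:kernel-bounded} and \eqref{front:kernel-derivative} bound the drift term by $\int_{\sigma_0}^\sigma(1+(\sigma-s)^{-1/2})\theta_0(s)\|\tilde Q(s)\|_{1/5}\,ds$. Because $\theta_0(s)\le Ce^{-a_0 s}$ is integrable, a Gronwall argument gives a uniform bound on $\|\tilde Q(\sigma)\|_{1/5}$. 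For this the weight exponent $1/5$ need not be preserved exactly, so I would work with nearby $\theta$ and use that $S(r)$ maps $X_\theta$ boundedly. The singular kernel $(\sigma-s)^{-1/2}$ is handled with a weakly singular Gronwall lemma. Taking $\sigma_0$ large makes $\int_{\sigma_0}^\infty\theta_0$ small, so one can instead close by a contraction on $[\sigma_0,\infty)$ directly.

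The bound on $Q_y$ for $0\le y\le 1$ and $\sigma\ge\sigma_0+1$ comes from interior and boundary parabolic regularity on unit time cylinders. The coefficients are bounded, $h$ is $C^1$ with small norm, and the sup bound just proved controls the solution. Nonnegativity follows from the maximum principle for the linear equation; the term $-Q$ in $\mathcal L$ is removed by multiplying by $e^{-\sigma}$. Finally, $Q(\sigma,2d)\le |h(\sigma)|+2d\sup|Q_y|\le Cd$ by the mean value theorem, since $|h|$ is much smaller than $d$.

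For part~(ii), I would work with the moment $J$ directly. Pairing the equation with $y$ and integrating by parts, $\int y\mathcal LP\,dy$ produces the boundary flux, since $y$ spans the adjoint kernel. Rather than differentiate formally, I use the Duhamel representation together with $\int yK\,dy=s$ and $\int yB\,dy=1$. This gives
\[
J(\sigma)=J(\sigma_0)+\int_{\sigma_0}^\sigma\Bigl[b(s)+\vartheta(s)\int_0^\infty P(s,y)\,dy-\int_0^\infty yA(s,y)P(s,y)\,dy\Bigr]ds,
\]
where the drift term comes from integrating $-\int y\vartheta P_y$ by parts. Each integrand is bounded by $Cd(s)$ or by a double-exponentially small quantity, using $\|P\|_{1/6}\le M$. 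Hence $J$ converges, and the tail gives $|J-2\sqrt\pi\alpha|\le Cd$.

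The step requiring most care is the profile estimate \eqref{front:linear-profile}: at most $\sigma-1$ of the history may be used, without derivative bounds on $b$. Write $P(\sigma)=S(1)P(\sigma-1)+\text{(contributions on }[\sigma-1,\sigma])$. The last unit of boundary input is handled by the second bound in \eqref{front:poisson-estimates} and is $O(d)$. The drift term over the last unit is handled by \eqref{front:kernel-derivative}, and is $O(d)$ by integrability of $r^{-1/2}$. The potential term is negligible.

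It remains to compare $S(1)P(\sigma-1)$ with $\alpha\phi$. Split $P(\sigma-1)=\Pi P(\sigma-1)+(I-\Pi)P(\sigma-1)$, where $\Pi P(\sigma-1)=\phi J(\sigma-1)/(2\sqrt\pi)$ is already within $Cd$ of $\alpha\phi$. The stable part is controlled by iterating the full Duhamel formula from $\sigma_0$. Estimates \eqref{front:kernel-gap}, \eqref{front:kernel-derivative-gap} and \eqref{front:poisson-estimates} give the convolution bound
\[
\|(I-\Pi)P(\sigma)\|_{1/6}\le Ce^{-(\sigma-\sigma_0)}+C\int_{\sigma_0}^\sigma e^{-(\sigma-s)}d(s)\,ds\le Cd(\sigma),
\]
using $a_0<1$. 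The Poisson remainder $B-\phi/(2\sqrt\pi)$ has integrated contribution $\int e^{-(\sigma-s)}|b(s)|\,ds\le Cd$.
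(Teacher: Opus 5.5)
Your proposal is correct and follows essentially the same route as the paper. The shared steps are: lifting the boundary datum; absorbing the integrable drift in Duhamel's formula for large $\sigma_0$; boundary parabolic regularity for the gradient; the exact moment identity from the kernel relations $\int yK\,dy=s$ and $\int yB\,dy=1$; and the projection split, with the gap estimates for increments $r\ge1$ and the bounded Poisson and derivative estimates on the most recent unit interval. Your intermediate decomposition $P(\sigma)=S(1)P(\sigma-1)+\cdots$ is unnecessary, since applying $I-\Pi$ directly to the full Duhamel formula already gives $\|(I-\Pi)P(\sigma)\|_{1/6}\le Cd(\sigma)$. That displayed bound should then also include the last-unit term $C\int_{\sigma-1}^{\sigma}[1+(\sigma-s)^{-1/2}]d(s)\,ds$, which you have already shown to be $O(d(\sigma))$.
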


\begin{proof}
Choose a fixed smooth, compactly supported function $\chi$ with $\chi(0)=1$,
and lift the boundary data in the first part by $h(\sigma)\chi(y)$.
The function $V=Q-h\chi$ has homogeneous boundary data and satisfies
\[
 V_\sigma+\mathcal LV+\theta_0V_y=F,
 \qquad
 \|F(\sigma)\|_{1/5}\le C e^{-c'e^{\rho\sigma}}.
\]
The drift convolution is controlled by
\begin{equation}\label{front:drift-absorption}
 \sup_{\sigma\ge\sigma_0}\int_{\sigma_0}^{\sigma}
 \bigl[1+(\sigma-s)^{-1/2}\bigr]d(s)\,ds
 \le C_\rho d(\sigma_0).
\end{equation}
Indeed, on $\sigma-s\ge1$ use the integrable tail of $d$, and on $0<\sigma-s<1$ use $d(s)\le d(\sigma_0)$ and the integrability of $(\sigma-s)^{-1/2}$. Since $|\theta_0|\le C_\rho d$, \eqref{front:kernel-bounded}--\eqref{front:kernel-derivative} give, for every finite final time $S$,
\[
 \sup_{\sigma_0\le\sigma\le S}\|V(\sigma)\|_{1/5}
 \le C\|V(\sigma_0)\|_{1/5}
 +C\int_{\sigma_0}^S\|F(s)\|_{1/5}\,ds
 +C e^{-a_0\sigma_0}
       \sup_{\sigma_0\le\sigma\le S}\|V(\sigma)\|_{1/5}.
\]
Taking $\sigma_0$ sufficiently large in terms of $\rho$, we absorb the last term and obtain the Gaussian bound. On $0\le y\le2$, the drift and the lifted forcing are uniformly
bounded. Local parabolic gradient estimates up to the flat boundary, applied to $V$ with homogeneous Dirichlet data on unit time intervals, give the second estimate in \eqref{front:linear-upper}; see~\cite{Lieberman1996} for the standard boundary regularity theory. The bound on $h'$ controls the lifted forcing, and adding back $h\chi$ preserves the gradient estimate.
Consequently, $Q(\sigma,2d)\le h+C(2d)\le Cd$.

The first moment in the second part satisfies the exact identity
\begin{equation}\label{front:moment-identity}
 J'=b+\vartheta\int_0^\infty P\,dy
       -\int_0^\infty yAP\,dy.
\end{equation}
The identity can also be obtained from the mild formula using the kernel moment identities above and integration by parts for the drift term. The Gaussian bound justifies the resulting integrals. Thus $|J'|\le Cd$, and integration of the integrable tail gives
\eqref{front:linear-moment}. To prove the profile estimate, separate the
projection \eqref{front:projection} in Duhamel's formula.
The stable part of the initial term is controlled by \eqref{front:kernel-gap}.
For the drift term, use \eqref{front:kernel-derivative} and \eqref{front:kernel-derivative-gap}, interpreting the derivative in the distributional sense.
For the potential term, use
\eqref{front:kernel-bounded}--\eqref{front:kernel-gap}.
Split the boundary term into the most recent unit interval and the preceding
times, and apply the two estimates in \eqref{front:poisson-estimates}.
For $\sigma\ge\sigma_0+1$, the stable terms are bounded in sum by
\[
 C e^{-(\sigma-\sigma_0)}
 +C\int_{\sigma_0}^{\sigma}
 e^{-(\sigma-s)}d(s)\,ds
 +C\int_{\sigma-1}^{\sigma}
 [1+(\sigma-s)^{-1/2}]d(s)\,ds
 \le Cd(\sigma).
\]
Here we use $a_0<1$; the first unit interval is controlled by the assumed weighted bound on $P$, with a constant depending on $\sigma_0$. Hence
$\|P-\Pi P\|_{1/6}\le Cd$, and the first-moment estimate yields
\eqref{front:linear-profile}.

\end{proof}

\subsection{Application to the nonlinear solution}
We now introduce the leading-edge variables that connect the nonlinear transmission problem to the preceding linear estimates. Here $w_\beta$ denotes a rescaled solution, distinct from the parameter variation $w$ in Section~\ref{sec:small-beta}. Recall $m(t)=2t-\frac32\log t$. For $t>0$ and $x=m(t)+z>0$, define
\[
 v_\beta(t,z)=e^zu^\beta(t,m(t)+z),\qquad
 \sigma=\log t,\quad \eta=\frac z{\sqrt t},\qquad
 w_\beta(\sigma,\eta)=t^{-1/2}v_\beta(t,\sqrt t\eta).
\]
A direct calculation gives
\begin{equation}\label{front:selfsimilar-equation}
 (w_\beta)_\sigma+\mathcal Lw_\beta
 +\frac32e^{-\sigma/2}(w_\beta)_\eta
 +t\frac{g(u^\beta)}{u^\beta}w_\beta=0.
\end{equation}
Here $t=e^\sigma$, and $u^\beta$ in the coefficient is evaluated at $x=m(t)+\sqrt t\,\eta$. The equation holds on the moving region $\eta>-m(t)/\sqrt t$, and the coefficient is defined to be zero wherever $u^\beta=0$. Below we work at sufficiently large times on truncated regions contained in $x>0$.

\begin{lemma}\label{lem:leading-edge}
Under Assumptions~\ref{hyp:f}--\ref{hyp:data}, for every finite $\beta\ge0$ there exists a leading-edge coefficient $\alpha_\beta>0$, independent of the auxiliary cutoff exponent $\rho$. Section~\ref{sec:phase} identifies it with $A_*e^{\xi_\beta}$ in Theorem~\ref{thm:phase}.
Fix $0<\rho<1/3$. There are constants $C,T$, depending on $f,u_0,\rho$ but independent of $\beta$, such that $0<\alpha_\beta\le C$ and, for $t\ge T$,
\begin{equation}\label{front:nonlinear-profile}
 \left|w_\beta(\log t,\eta)-\alpha_\beta\eta e^{-\eta^2/4}\right|
       \le Ct^{\rho-1/2}e^{-\eta^2/7},
       \qquad\eta\ge t^{\rho-1/2}.
\end{equation}
In particular, locally uniformly for $\eta\in(0,\infty)$,
\[
 t^{-1/2}e^{\sqrt t\eta}
 u^\beta(t,m(t)+\sqrt t\eta)
       \longrightarrow\alpha_\beta\eta e^{-\eta^2/4}.
\]
If $0<\gamma<1/3$, then
\begin{equation}\label{front:intermediate}
 \frac{e^{t^\gamma}u^\beta(t,m(t)+t^\gamma)}{t^\gamma}
       \longrightarrow\alpha_\beta,
 \qquad
 \frac{e^{t^\gamma}u_x^\beta(t,m(t)+t^\gamma)}{t^\gamma}
       \longrightarrow-\alpha_\beta.
\end{equation}
Moreover, with $\ell(t)=m(t)+t^\rho$,
\begin{equation}\label{front:uniform-moment}
 \mathcal J_\beta(t):=e^{-2t}\int_{\ell(t)}^\infty
 (x-\ell(t))e^xu^\beta(t,x)\,dx,
 \qquad
 |\mathcal J_\beta(t)-2\sqrt\pi\alpha_\beta|
       \le Ct^{\rho-1/2}.
\end{equation}
\end{lemma}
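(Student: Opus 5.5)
The plan is to apply Lemma~\ref{lem:linear-front} in the self-similar variables on a truncated region $\eta\ge\eta_-(\sigma)$, with cutoff $z=-t^{\rho}$, i.e.\ $\eta_-=-t^{\rho-1/2}=-d(\sigma)\cdot t^{0}$ up to the normalization $d(\sigma)=e^{-a_0\sigma}=t^{\rho-1/2}$. I will shift the spatial variable, $y=\eta-\eta_-(\sigma)$. The moving boundary then produces an extra drift of size $|\eta_-'|\lesssim d$, and together with $\frac32e^{-\sigma/2}$ this gives exactly the drift $\theta_0$ (or a $\vartheta$ with $|\vartheta|\le Md$). For $t$ large this region lies in $x>0$, so only the receiving-side solution enters, and the bounds will be uniform in $\beta$.

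\textbf{Step 1 (upper barrier).} On the line $z=-t^\rho$, \eqref{eq:uniform-leading-bound} gives $w_\beta\le Ct^{1/2}e^{t^\rho}\cdot t^{-1/2}$ only, which is too weak. I would therefore place the boundary farther back and use the receiving-side monotonicity \eqref{eq:right-monotone} together with a crude bound. The better choice is to define the comparison $Q$ with boundary data $h$ taken from the Gaussian bound \eqref{eq:uniform-gaussian} at a point where $u^\beta$ is already super-exponentially small after weighting. The variant I would actually try is this: compare $w_\beta$ with the solution $Q$ of Lemma~\ref{lem:linear-front}(i), whose boundary is at $z=-t^\rho$ and whose data are $h=e^{-t^\rho}\cdot$(bound). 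This uses that the nonlinear term $t\,g(u)/u\ge0$ makes $w_\beta$ a subsolution of the linear equation. Part~(i) then gives $w_\beta\le Q\le C$ in $X_{1/5}$, uniformly in $\beta$, and $Q(\sigma,2d)\le Cd$. On the interior this yields $u^\beta(t,m+z)\le C(1+z_+)e^{-z}$, and hence the potential $A=t\,g(u)/u\le Ct\,u\le Ct\,e^{t^\rho}\cdots$. I would move the truncation to $z=+t^\rho$ so that $A\le Ct^{C}e^{-t^\rho}$, which matches the hypothesis $\|A\|_\infty\le Me^{M\sigma-ce^{\rho\sigma}}$.

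\textbf{Step 2 (amplitude).} With the boundary at $z=t^\rho$, i.e.\ at $\ell(t)$, set $P(\sigma,y)=w_\beta(\sigma,y+t^{\rho-1/2})$. Its boundary value is $b=t^{-1/2}e^{t^\rho}u^\beta(t,\ell)$, and Step~1 gives $|b|\le Ct^{\rho-1/2}=Cd$. Lemma~\ref{lem:linear-front}(ii) then yields $\alpha_\beta$ with $|J-2\sqrt\pi\alpha_\beta|\le Cd$ and the profile estimate \eqref{front:linear-profile}. Converting $J$ back to the original variables gives exactly $\mathcal J_\beta(t)$ in \eqref{front:uniform-moment}. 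Translating \eqref{front:linear-profile} back, and comparing $\phi(\eta-t^{\rho-1/2})$ with $\phi(\eta)$ at cost $Cd\,e^{-\eta^2/7}$, gives \eqref{front:nonlinear-profile}. The constants depend only on common bounds, hence not on $\beta$, and this also gives $\alpha_\beta\le C$. Independence of $\rho$ follows from the pointwise limit, which does not involve $\rho$. For \eqref{front:intermediate}, I would choose the cutoff exponent $\rho<\gamma$. Then $\eta=t^{\gamma-1/2}\ge t^{\rho-1/2}$, and \eqref{front:nonlinear-profile} gives $w=\alpha_\beta\eta(1+o(1))+O(t^{\rho-1/2})$, so the $o(\eta)$ error is relative. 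The derivative statement follows by the same route, using the gradient estimates in Lemma~\ref{lem:linear-front} or interior parabolic estimates applied to $P-\alpha\phi$.

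\textbf{Step 3 (positivity), the main obstacle.} Nothing so far excludes $\alpha_\beta=0$. I would prove positivity by a lower comparison. Fix a time $t_1$. By Lemma~\ref{lem:reflection-comparison} and the strong maximum principle, $q_\beta(t_1,\cdot)>0$, so it dominates a small compactly supported bump $\epsilon\psi$ on $(1,2)$. For all $t\ge t_1$, $q_\beta$ is then a supersolution, since its boundary trace is positive, of the Dirichlet half-line KPP problem started from $\epsilon\psi$. For that problem I would take a linear subsolution of the form $e^{t}\times$(Dirichlet heat solution)$-M(\ldots)^2$ in the leading-edge variables. Equivalently, in the moment identity \eqref{front:moment-identity} for $P$ the boundary input $b$ is nonnegative, while the potential loss $\int yAP$ is integrable in $\sigma$ and small relative to $J$ once $J>0$ at a large time. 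A Gr\"onwall argument $J'\ge-\|A\|_\infty J$, applied with $b\ge0$ and drift controlled via $\int P\le CJ^{1/2}\cdots$, is what I would try first. The delicate point is the sign-indefinite drift term, and I expect handling that term to be the hardest part.
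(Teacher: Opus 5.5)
Your Steps~1--2 follow the paper's route. A common linear supersolution from Lemma~\ref{lem:linear-front}(i) is placed behind the negative cutoff $z=-t^\rho$. Its boundary data come simply from $u^\beta\le1$, so $w_\beta(\sigma,-d)\le t^{-1/2}e^{-t^\rho}$, and its common $X_{1/5}$ initial envelope comes from \eqref{eq:uniform-gaussian}. This gives $w_\beta(\sigma,d)\le Cd$, and part~(ii) at the positive cutoff then yields the amplitude, \eqref{front:uniform-moment}, and the profile bound uniformly in $\beta$.

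Two side claims are off but harmless. First, the supersolution gives $u^\beta(t,m(t)+z)\le Ct^{1/2}e^{-z}$, not $C(1+z_+)e^{-z}$; this is still enough for $A_\beta\le Ct^{3/2}e^{-t^\rho}$ on $z\ge t^\rho$. Second, for the gradient limit only your interior-estimate option works. It must be scaled to cylinders of radius comparable to $\eta=t^{\gamma-1/2}$, giving $|\partial_\eta(w_\beta-\alpha_\beta\phi)|\le Cd/\eta$; the near-boundary bound of part~(i) does not provide this.

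The genuine gap is Step~3: positivity of $\alpha_\beta$ is not established, and a Gr\"onwall argument on the first moment $J$ of $P$ itself cannot close. The obstruction is not the sign of the drift, since $\vartheta=-\rho d+\frac32e^{-\sigma/2}<0$ for large $\sigma$. It is the size of the drift term relative to $J$. In \eqref{front:moment-identity}, the term $\vartheta\int_0^\infty P\,dy$ is controlled only through the common weighted bound on $P$, because mass near $y=0$ is not bounded by the first moment. From any $\sigma_1$ you therefore get only $J(\sigma)\ge J(\sigma_1)-Cd(\sigma_1)$. If $\alpha_\beta=0$, then $J(\sigma_1)\le Cd(\sigma_1)$ by \eqref{front:linear-moment}, so nothing is contradicted. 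Your other suggestion, a subsolution for the Dirichlet half-line KPP problem, only transfers the same positivity question to another problem, and the quadratic-correction ansatz is left unspecified.

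The missing idea is a linear minorant whose norms all scale with its seed. $P$ solves a linear equation with the actual coefficients $\vartheta$ and $A_\beta=tg(u^\beta)/u^\beta\ge0$, and it has nonnegative boundary values.
\begin{enumerate}
\item Fix $0\le\psi\in C_c^\infty((1,2))$ with $\psi\ne0$.
\item First choose $\sigma_1$ so large that drift and potential are absorbed in the homogeneous-Dirichlet Duhamel estimate and $C\|\psi\|_{1/6}\int_{\sigma_1}^\infty(|\vartheta|+\|A_\beta\|_\infty)\,d\sigma<\frac12\int_0^\infty y\psi\,dy$.
\item Only then choose $\varepsilon>0$ with $\varepsilon\psi\le P(\sigma_1,\cdot)$, using strict positivity of $P(\sigma_1,\cdot)$.
\item Let $H$ solve the same equation with zero boundary data and $H(\sigma_1)=\varepsilon\psi$. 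Then $0\le H\le P$ and $\sup_{\sigma\ge\sigma_1}\|H(\sigma)\|_{1/6}\le C\varepsilon\|\psi\|_{1/6}$.
\item By \eqref{front:moment-identity}, the first moment of $H$ varies by less than $\frac\varepsilon2\int_0^\infty y\psi\,dy$. Hence $2\sqrt\pi\alpha_\beta\ge\lim_{\sigma\to\infty}\int_0^\infty yH\,dy>0$.
\end{enumerate}
Choosing $\sigma_1$ before $\varepsilon$ is what makes the loss proportional to the seeded moment.

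Alternatively, your first idea can be repaired without any quadratic correction. A purely linear moving-Dirichlet subsolution, as in Lemma~\ref{lem:positive-tail}, applies to $q_\beta$, because $f(q_\beta)/q_\beta\ge1-Cte^{-z}$ for $z\ge0$ by \eqref{eq:uniform-leading-bound}. Combined with the first limit in \eqref{front:intermediate}, it also gives $\alpha_\beta>0$.
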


\begin{proof}
Fix $\rho$, and write $d(\sigma)=e^{-(1/2-\rho)\sigma}$. By \eqref{eq:uniform-gaussian}, at any fixed positive time the weighted right tail has a Gaussian bound uniform in $\beta$. Choose a common starting time $\sigma_*$ sufficiently large for Lemma~\ref{lem:linear-front} and for the negative cutoff $z=-t^\rho$ to lie in $x>0$ for all subsequent times. Write $t_*=e^{\sigma_*}$. This cutoff uses the bound $u^\beta\le1$ to make the boundary input small:
\[
 0\le w_\beta(\sigma,-d(\sigma))
       \le h(\sigma):=e^{-\sigma/2-e^{\rho\sigma}}.
\]
Remove the nonnegative absorption term from \eqref{front:selfsimilar-equation} and construct a common linear supersolution with boundary value $h$. Its initial profile can be chosen continuous, nonnegative, dominating every $w_\beta(\sigma_*,\cdot)$ on the truncated region, with a common $X_{1/5}$ bound after shifting the boundary to zero, and with value $h(\sigma_*)$ at that boundary. Indeed, take the minimum of the bound $t_*^{-1/2}e^{\sqrt{t_*}\eta}$ and a Gaussian upper envelope furnished by \eqref{eq:uniform-gaussian}, enlarged to be at least $h(\sigma_*)$ at the cutoff. This also ensures initial-boundary compatibility.
Move the negative cutoff to zero by setting $y=\eta+d(\sigma)$.
The drift coefficient is exactly
$\theta_0=\rho d+\frac32e^{-\sigma/2}$, so the first part of
Lemma~\ref{lem:linear-front} applies; $h$ and $h'$ satisfy its required decay bound.
For $\sigma\ge\sigma_*+1$, the gradient bound for the linear supersolution and comparison yield
\begin{equation}\label{front:positive-cutoff-bound}
 w_\beta(\sigma,d(\sigma))\le Cd(\sigma).
\end{equation}
The point $\eta=d$ corresponds to $y=2d$ in the supersolution coordinates. We increase the threshold time $T$ to include this one-unit waiting interval.

The positive cutoff makes the nonlinear potential uniformly small. Move this cutoff to zero and set
\[
 \mathcal P_\beta(\sigma,y)=w_\beta(\sigma,y+d(\sigma)).
\]
The coefficients in \eqref{front:linear-positive-cut} are now
\begin{equation}\label{front:positive-coefficients}
 \vartheta=-\rho d+\frac32e^{-\sigma/2},\qquad
 A_\beta(\sigma,y)=t\frac{g(u^\beta(t,x))}{u^\beta(t,x)}\ge0,
 \qquad x=m(t)+\sqrt t\,(y+d).
\end{equation}
The Gaussian bound for the common supersolution and \eqref{eq:g} give,
on $z\ge t^\rho$,
\[
 0\le A_\beta\le Ctu^\beta
       \le Ct^{3/2}e^{-t^\rho}.
\]
After the shift, the Gaussian bound remains uniform with the weaker weight
$1/6$. Together with \eqref{front:positive-cutoff-bound}, the second part of
Lemma~\ref{lem:linear-front}, started at $\sigma_*+1$, yields a nonnegative amplitude and the profile and first-moment estimates. Positive-time regularity gives continuous coefficients, boundary traces, and compatible initial values at this starting time. The common weighted bound also yields a uniform upper bound for the amplitudes. Returning from $y=\eta-d$ to $\eta$ and weakening the Gaussian
weight slightly once more gives \eqref{front:nonlinear-profile}.
The change of variables also gives the exact identity
\[
 \int_0^\infty y\mathcal P_\beta(\log t,y)\,dy=\mathcal J_\beta(t),
\]
so \eqref{front:uniform-moment} holds with the same common constant. If two different cutoff exponents are used, the resulting profile limits at any fixed $\eta>0$ must agree. Since $\phi(\eta)>0$, their amplitudes coincide; we denote this common value by $\alpha_\beta$.

We now prove strict positivity for each fixed $\beta$. First fix a nonzero function $\psi\in C_c^\infty((1,2))$ with $\psi\ge0$. Choose a time $\sigma_1\ge\sigma_*+1$ sufficiently large that the decaying drift and potential can be absorbed in the homogeneous-boundary Duhamel estimate below and that
\[
 C\|\psi\|_{1/6}\int_{\sigma_1}^\infty
 (|\vartheta|+\|A_\beta\|_\infty)\,d\sigma
 <\frac12\int_0^\infty y\psi(y)\,dy,
\]
where $C$ is the constant in that estimate, independent of sufficiently large $\sigma_1$. Finally, positivity of the transmission kernel and comparison with the linear heat evolution give $\mathcal P_\beta(\sigma_1,y)>0$ for $y>0$. Hence one can choose $\varepsilon>0$ such that $\varepsilon\psi\le \mathcal P_\beta(\sigma_1,\cdot)$.

Let $H$ solve \eqref{front:linear-positive-cut} with the actual coefficients $\vartheta,A_\beta$, homogeneous Dirichlet boundary data, and initial datum $\varepsilon\psi$ at $\sigma_1$. Comparison gives $0\le H\le \mathcal P_\beta$. In Duhamel's formula, \eqref{front:drift-absorption} bounds the drift contribution by $Cd(\sigma_1)$ times the weighted supremum norm. The potential contribution is bounded by $C\int_{\sigma_1}^\infty\|A_\beta(s)\|_\infty\,ds$ times that norm, using \eqref{front:kernel-bounded}. Both coefficients are small for sufficiently large $\sigma_1$. Absorbing these terms gives
\[
 \sup_{\sigma\ge\sigma_1}\|H(\sigma)\|_{1/6}
       \le C\varepsilon\|\psi\|_{1/6}.
\]
By \eqref{front:moment-identity} with zero boundary data, the total variation of its first moment on $[\sigma_1,\infty)$ is at most
\[
 C\varepsilon\|\psi\|_{1/6}
 \int_{\sigma_1}^\infty
 (|\vartheta|+\|A_\beta\|_\infty)\,d\sigma
 <\frac\varepsilon2\int_0^\infty y\psi(y)\,dy.
\]
Thus the limiting first moment of $H$ is positive. Since $H\le \mathcal P_\beta$, this implies $\alpha_\beta>0$. The seed size $\varepsilon$ may depend on $\beta$, so this argument does not assert a positive lower bound uniform over all parameters.

For the intermediate-scale limits, given $\gamma$, choose $0<\rho<\gamma$.
At $\eta=t^{\gamma-1/2}$, the profile error divided by $\eta$ is
$O(t^{\rho-\gamma})$, which proves the first limit in
\eqref{front:intermediate}. For the gradient limit, use the unshifted self-similar coordinates and set
\[
 E_\beta=w_\beta-\alpha_\beta\phi,\qquad
 a(\sigma)=\frac32e^{-\sigma/2},\qquad
 \widetilde A_\beta(\sigma,\eta)
 =t\frac{g(u^\beta(t,m(t)+\sqrt t\eta))}
          {u^\beta(t,m(t)+\sqrt t\eta)}.
\]
Then
\[
 (E_\beta)_\sigma+\mathcal LE_\beta
 +a(E_\beta)_\eta+\widetilde A_\beta E_\beta
 =-\alpha_\beta\bigl(a\phi'+\widetilde A_\beta\phi\bigr).
\]
For $2d(\sigma)\le\eta\le1$ and sufficiently large $\sigma$, take a backward parabolic cylinder with spatial radius $\eta/4$ and time length $(\eta/4)^2$. It remains to the right of the moving positive cutoff, and $d$ is comparable to $d(\sigma)$ throughout it. On this cylinder, \eqref{front:nonlinear-profile} bounds $E_\beta$ by $Cd$, while the potential is superexponentially small and the amplitudes are uniformly bounded. The scaled interior parabolic gradient estimate (see~\cite{Lieberman1996}) therefore gives
\[
 |\partial_\eta E_\beta(\sigma,\eta)|
 \le C\left(\frac d\eta
       +\eta\bigl(e^{-\sigma/2}
          +\|\widetilde A_\beta\|_{L^\infty(\text{cylinder})}\bigr)\right)
 \le C\frac d\eta.
\]
At the intermediate scale above, this yields
$\partial_zv_\beta=\alpha_\beta+o(1)$.
Using $u_x^\beta=e^{-z}(\partial_zv_\beta-v_\beta)$ and dividing by
$z=t^\gamma$ proves the second limit in \eqref{front:intermediate}.
\end{proof}

\subsection{Application to signed difference quotients}
The next corollary isolates the common bounds needed to control a family of signed solutions. Its application to parameter differences will require separate estimates near the interface.

\begin{corollary}\label{cor:signed-amplitudes}
Under Assumptions~\ref{hyp:f}--\ref{hyp:data}, fix $0<\rho<1/3$, and set
\[
 \ell(t)=m(t)+t^\rho,\qquad d(\sigma)=e^{-(1/2-\rho)\sigma}.
\]
Choose a common $T_0>1$ sufficiently large that $m(t)-t^\rho>0$ for $t\ge T_0$. Consider a family of classical solutions $W$ on $t>T_0$, $x>0$, continuous up to $t=T_0$ and bounded on each finite time strip, satisfying
\[
 W_t=W_{xx}+c(t,x)W,
\]
where $c$ is continuous, locally bounded, and $c\le1$. In the negative-cutoff coordinates, define
\[
 Z_W(\sigma,y)=t^{-1/2}e^{\sqrt t\,(y-d(\sigma))}
 W\!\left(t,m(t)+\sqrt t\,(y-d(\sigma))\right),
 \qquad t=e^\sigma,\quad y\ge0.
\]
Suppose that common constants $M,c_0>0$ give
\[
 |Z_W(\sigma,0)|\le M e^{-c_0e^{\rho\sigma}}
 \quad(\sigma\ge\log T_0),\qquad
 \|Z_W(\log T_0,\cdot)\|_{1/5}\le M.
\]
For each member of the family, suppose also that there are finite $\beta_1,\beta_2\ge0$ such that
\[
 |1-c(t,x)|\le M\bigl(u^{\beta_1}(t,x)+u^{\beta_2}(t,x)\bigr),
 \qquad t\ge T_0,\quad x\ge\ell(t).
\]
Then each $W$ has a real amplitude $\alpha_W$, and there are common constants $C,T$ such that $|\alpha_W|\le C$ and, for $t\ge T$,
\begin{equation}\label{front:signed-moment}
 \left|e^{-2t}\int_{\ell(t)}^\infty
 (x-\ell(t))e^xW(t,x)\,dx-2\sqrt\pi\alpha_W\right|
 \le Ct^{\rho-1/2}.
\end{equation}
Moreover, defining
\[
 P_W(\log t,y)=t e^{x-2t}W(t,x),
 \qquad x=\ell(t)+\sqrt t\,y,
\]
we have $\|P_W(\log t,\cdot)-\alpha_W\phi\|_{1/6}\le Ct^{\rho-1/2}$. For each $\rho<\gamma<1/3$,
\begin{equation}\label{front:signed-intermediate}
 \frac{e^{t^\gamma}W(t,m(t)+t^\gamma)}{t^\gamma}
 \longrightarrow\alpha_W,
\end{equation}
uniformly over the family. The constants depend only on $f,u_0,\rho,T_0,M,c_0$, and not on the individual solution or the parameters $\beta_1,\beta_2$.
\end{corollary}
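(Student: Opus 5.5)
The plan is to repeat the proof of Lemma~\ref{lem:leading-edge} for a signed $W$. The one new ingredient is that the potential $1-c$ is controlled by the nonlinear solutions $u^{\beta_i}$ rather than by $W$ itself.

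\textbf{Self-similar equation.} Set $v=e^zW(t,m(t)+z)$ and $w=t^{-1/2}v(t,\sqrt t\eta)$. Exactly as in \eqref{front:selfsimilar-equation}, $w$ satisfies
\[
 w_\sigma+\mathcal Lw+\tfrac32e^{-\sigma/2}w_\eta+t(1-c)w=0 .
\]
Since $c\le1$, the term $t(1-c)$ is a nonnegative absorption.

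\textbf{Step 1: common Gaussian bound on $|Z_W|$.} Work in the negative-cutoff coordinates $y=\eta+d(\sigma)$, where the drift is $\theta_0$. Let $Q$ solve part~(i) of Lemma~\ref{lem:linear-front} with boundary datum $Me^{-c_0e^{\rho\sigma}}$ (suitably smoothed so that $h'$ obeys the same decay) and initial datum $|Z_W(\log T_0,\cdot)|$, raised at the corner for compatibility. Because $(1-c)\ge0$, both $Q$ and $-Q$ are super- and subsolutions of the equation with potential $t(1-c)$ dropped. Comparison therefore gives $|Z_W|\le Q$ uniformly over the family, and part~(i) supplies the $X_{1/5}$ bound and $|Z_W(\sigma,2d)|\le Cd$. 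Comparison with a signed solution on an unbounded half-line is legitimate here because $W$ is bounded on finite strips, so Phragm\'en--Lindel\"of type growth conditions hold. If $T_0$ is smaller than the threshold in part~(i), I would first propagate on the fixed finite interval using \eqref{front:kernel-bounded} and $e^{\int|\theta_0|}$-type bounds, with constants depending only on $T_0$.

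\textbf{Step 2: positive cutoff and part~(ii).} Shift to $\mathcal P_W(\sigma,y)=w(\sigma,y+d)$. This has drift $\vartheta=-\rho d+\frac32e^{-\sigma/2}$, boundary datum bounded by $Md$ from Step 1, and potential $A=t(1-c)$. On $x\ge\ell(t)$ the hypothesis and \eqref{eq:uniform-leading-bound} give
\[
 |A|\le CMt\bigl(u^{\beta_1}+u^{\beta_2}\bigr)\le Ct^2e^{-t^\rho},
\]
uniformly in $\beta_1,\beta_2$. This is of the form $Me^{M\sigma-ce^{\rho\sigma}}$. Step 1 gives the common $\|\cdot\|_{1/6}$ bound after the shift. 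Part~(ii) of Lemma~\ref{lem:linear-front}, started at a common time $\max(\log T_0,\sigma_*)+1$, then yields a real $\alpha_W$ with the moment and profile estimates. Its constants are explicitly common and require no derivative of the boundary datum. Positivity is not needed, since $W$ is signed. Finally, unwind the variables:
\[
 \int_0^\infty y\,\mathcal P_W\,dy=e^{-2t}\int_{\ell}^\infty (x-\ell)e^xW\,dx ,
\]
and $P_W$ as defined in the statement coincides with $\mathcal P_W$. This gives \eqref{front:signed-moment} and the profile bound, with $|\alpha_W|\le C$ from the common weighted bound.

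\textbf{Step 3: intermediate scale.} For \eqref{front:signed-intermediate}, unshift the profile estimate and evaluate at $\eta=t^{\gamma-1/2}\ge2d$. The error relative to $\alpha_W\eta$ is $O(t^{\rho-\gamma})$, uniformly over the family.

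\textbf{Main obstacle.} I expect the main obstacle to be Step 1's two-sided comparison together with the initial interval. One must check that dropping the absorption is valid for a signed solution: it is, since $|Z_W|$ is a subsolution in the viscosity sense, or equivalently one compares $\pm Z_W$ with $Q$ separately. One must also make sure that the constant in part~(i), which depends on $\|Q(\sigma_0)\|_{1/5}$ and on $\sigma_0$ being large, stays common over the family.
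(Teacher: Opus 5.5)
Your proposal is correct and follows essentially the same route as the paper: a common linear supersolution from Lemma~\ref{lem:linear-front}(i) at the negative cutoff, including the preliminary propagation on a finite interval, then part~(ii) at the positive cutoff with $A=t(1-c)$ controlled by \eqref{eq:uniform-leading-bound}, and then the moment identity and the intermediate-scale evaluation. The differences are minor technical variants and both work. You compare $\pm Z_W$ separately with the nonnegative $Q$, which is a supersolution of the absorbing equation because $A\ge0$ and $Q\ge0$, whereas the paper uses Kato's inequality to make $|W|$ a subsolution. You also start from $|Z_W(\log T_0,\cdot)|$ raised at the corner rather than the paper's common envelope $h(\log T_0)e^{-y^2/5}$; this is harmless because part~(i) depends only on an upper bound for the initial $X_{1/5}$ norm.
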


\begin{proof}
To obtain Kato's inequality, use the smooth convex approximation
$\Phi_\delta(s)=\sqrt{s^2+\delta^2}$. The equation for $W$ gives
\[
 (\partial_t-\partial_{xx})\Phi_\delta(W)
 =cW\Phi_\delta'(W)-\Phi_\delta''(W)|W_x|^2
 \le cW\Phi_\delta'(W).
\]
Letting $\delta\downarrow0$ against nonnegative compactly supported test
functions yields
$\partial_t|W|-\partial_{xx}|W|\le c|W|\le|W|$ in distributions.
The same argument for an additional bounded source $F$ gives the upper
bound $c|W|+|F|$, a form used in Section~\ref{sec:small-beta}.
At the negative cutoff, prescribe the smooth upper envelope
\[
 h(\sigma)=M_1\exp(-c_0e^{\rho\sigma}),\qquad
 |h|+|h'|\le C\exp\bigl(-(c_0/2)e^{\rho\sigma}\bigr).
\]
Choose $M_1$ large enough, in terms of the common data, that $h(\log T_0)\ge M$. Then $h(\log T_0)e^{-y^2/5}$ is a common dominating initial profile in the shifted negative-cutoff coordinates, compatible with $h$ at $y=0$. The linear supersolution construction used in the preceding subsection applies to $|W|$. If necessary, propagate this common supersolution to a later fixed time before applying Lemma~\ref{lem:linear-front}(i). On this finite interval, the kernel estimates and a Volterra Gronwall estimate with the integrable singularity $(\sigma-s)^{-1/2}$ preserve a common $X_{1/5}$ bound. Thus the bound at the later starting time depends only on the listed common data. After a further one-unit waiting interval, its gradient bound over the distance $2d$ between the cutoffs gives
\[
 |P_W(\sigma,0)|\le Cd(\sigma),\qquad
 \|P_W(\sigma,\cdot)\|_{1/6}\le C.
\]
No derivative estimate for the actual boundary trace of $W$ is needed.

In the positive-cutoff coordinates, direct calculation gives
\[
 (P_W)_\sigma+\mathcal LP_W
 +\left(-\rho d+\frac32e^{-\sigma/2}\right)(P_W)_y
 +A_WP_W=0,
 \qquad A_W(\sigma,y)=t(1-c(t,x)).
\]
The coefficient assumption and the preceding nonlinear estimates imply
\[
 0\le A_W\le Mt\bigl(u^{\beta_1}+u^{\beta_2}\bigr)
 \le Ct^{3/2}e^{-t^\rho},\qquad x\ge\ell(t).
\]
Thus all bounds in Lemma~\ref{lem:linear-front}(ii) hold with common constants, and positive-time continuity gives compatible initial and boundary traces at the chosen starting time. That lemma yields the asserted profile estimate and a bounded family of real amplitudes. The identity
\[
 \int_0^\infty yP_W(\log t,y)\,dy
 =e^{-2t}\int_{\ell(t)}^\infty(x-\ell(t))e^xW(t,x)\,dx
\]
gives \eqref{front:signed-moment}. Finally, for $\gamma>\rho$, set $y=t^{\gamma-1/2}-d$. The profile error, after division by $t^{\gamma-1/2}$, is $O(t^{\rho-\gamma})$, while $y/t^{\gamma-1/2}\to1$ and $\phi(y)/y\to1$. This proves \eqref{front:signed-intermediate} uniformly.
\end{proof}

If the hypotheses hold for every sufficiently small $\rho>0$, the amplitudes obtained with different cutoffs agree by evaluating the profile limit at a fixed positive self-similar coordinate. One may then choose $\rho<\gamma$ to obtain \eqref{front:signed-intermediate} for every $0<\gamma<1/3$; the common bounds may depend on that choice of $\rho$.

Corollary~\ref{cor:signed-amplitudes} will be applied to the weak-barrier
difference quotient in Section~\ref{sec:small-beta}, where we establish
the common boundary and initial weighted bounds and verify the coefficient
hypotheses. Combined with finite-time differentiability, its uniform moment
estimate will identify the derivative of the amplitude. We first use the
positive nonlinear amplitude to determine the wave phase for each fixed
finite resistance.

\section{Phase and far-field time delay for a fixed resistance}
\label{sec:phase}

Throughout this section, Assumptions~\ref{hyp:f}--\ref{hyp:data} hold and the resistance $0\le\beta<\infty$ is fixed. We prove the phase and arrival-time conclusions of Theorem~\ref{thm:phase} without assuming concavity of $f$. The corresponding homogeneous convergence results are discussed in~\cite{HNRR2013,NRR2017}.

Lemma~\ref{lem:leading-edge} provides a positive leading-edge amplitude $\alpha_\beta$. Since the translated wave has the tail
\[
 U_*(z-\xi)\sim A_*e^\xi z e^{-z}\qquad(z\to+\infty),
\]
matching its leading coefficient with $\alpha_\beta$ determines the candidate phase $\xi_\beta=\log(\alpha_\beta/A_*)$. We will prove that this candidate indeed gives convergence on the entire receiving side:
\begin{equation}\label{phase:uniform-wave}
 \lim_{t\to\infty}\sup_{x\ge0}
 \left|u^\beta(t,x)-U_*(x-m(t)-\xi_\beta)\right|=0,
 \qquad
 \xi_\beta=\log\frac{\alpha_\beta}{A_*}.
\end{equation}
As before, the value at $x=0$ is the right-hand trace. For the first-arrival times defined in \eqref{eq:arrival}, we will then show that $T_\eta^\beta(R)<\infty$ for every $R>0$ and $\eta\in(0,1)$, and that
\begin{equation}\label{phase:delay-formula}
 \lim_{R\to\infty}\left[T_\eta^\beta(R)-T_\eta^0(R)\right]
 =\frac{\xi_0-\xi_\beta}{2}
 =\frac12\log\frac{\alpha_0}{\alpha_\beta}.
\end{equation}
In particular, the limiting delay is independent of the arrival level.

After recalling the spatial monotonicity and finite-time tail estimates,
the proof proceeds in three steps. We first match the leading-edge amplitude to the interior wave profile on a moving interval lying to the right of the interface. Spatial monotonicity then extends convergence on bounded intervals in the moving frame to the entire receiving half-line. Finally, a finite-time tail bound excludes early arrivals at distant points, allowing the phase convergence to yield \eqref{phase:delay-formula} without temporal monotonicity.

\subsection{Spatial monotonicity and finite-time tail bounds}

We recall two consequences of Section~\ref{sec:preliminaries}. By \eqref{eq:right-monotone}, $u^\beta(t,\cdot)$ is strictly decreasing on the receiving side for every $t>0$. Together with $0\le u^\beta\le1$, this will control both tails once convergence near the front has been established.

For every finite $T>0$, \eqref{eq:uniform-gaussian} gives
\begin{equation}\label{phase:finite-time-tail}
 \sup_{0<t\le T}u^\beta(t,x)
 \le e^T\operatorname{erfc}\!\left(\frac{x}{2\sqrt T}\right),\qquad x>0.
\end{equation}
Indeed, both $e^t$ and $\erfc(x/(2\sqrt t))$ are increasing in $t$ for fixed $x>0$. The bound is independent of the finite resistance, and hence
\[
 \lim_{x\to+\infty}\sup_{0\le\beta<\infty}\sup_{0<t\le T}
 u^\beta(t,x)=0.
\]
Thus, for each arrival level $\eta\in(0,1)$ and each fixed $T$, sufficiently distant points cannot reach that level during $0<t\le T$, uniformly in $\beta$. In particular, the first-arrival times tend to infinity as $R\to\infty$, with $+\infty$ still allowed at this stage. Their finiteness will follow from phase convergence.

These two properties will be used after we obtain local convergence in the moving frame. We next establish the interior matching result needed for that step.

\subsection{Renormalized energy on a moving interval}

Fix $0<\gamma<1/3$ and set $R(t)=t^\gamma$.
The right boundary lies in the leading edge and prescribes its amplitude. The next lemma shows that this amplitude selects the interior wave profile, while the left boundary data contribute only an integrable error after exponential weighting. It therefore allows the trace of the actual solution at the left boundary to be used without assuming convergence behind the front in advance.

\begin{lemma}\label{matching:energy-lemma}
Suppose that Assumption~\ref{hyp:f} holds. Given $\alpha>0$ and $T\ge1$, let $h\in C^1([T,\infty))$ satisfy
$0\le h\le1$ and $\sup_{t\ge T}|h'(t)|<\infty$.
Let $q_T$ be a measurable function on $(-R(T),R(T))$ with $0\le q_T\le1$. On $-R(t)<z<R(t)$, let $q_{\mathrm{aux}}$ be the bounded solution of
\begin{equation}\label{matching:auxiliary-equation}
 \begin{cases}
 (q_{\mathrm{aux}})_t=(q_{\mathrm{aux}})_{zz}
  +\left(2-\dfrac3{2t}\right)(q_{\mathrm{aux}})_z
  +f(q_{\mathrm{aux}}),\\[1mm]
 q_{\mathrm{aux}}(t,-R(t))=h(t),\\
 q_{\mathrm{aux}}(t,R(t))=\alpha R(t)e^{-R(t)},\\
 q_{\mathrm{aux}}(T,z)=q_T(z).
 \end{cases}
\end{equation}
The equation and boundary conditions hold for $t>T$. The solution is understood in the mild sense, is classical in the interior for $t>T$, and attains the lateral boundary data continuously away from the initial corners. Its initial trace is $q_T$ locally in $L^p((-R(T),R(T)))$ for $1\le p<\infty$; no compatibility at the initial corners is required. Assume that $T$ is large enough that $\alpha R(t)e^{-R(t)}\le1$ for all $t\ge T$. Then, for every $0<K<\infty$,
\begin{equation}\label{matching:auxiliary-limit}
 \lim_{t\to\infty}\sup_{|z|\le K}
 \left|q_{\mathrm{aux}}(t,z)
 -U_*\!\left(z-\log\frac\alpha{A_*}\right)\right|=0.
\end{equation}
\end{lemma}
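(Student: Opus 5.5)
The plan is to follow the renormalized-energy / steady-state matching argument of \cite{NRR2017}. We compare $q_{\mathrm{aux}}$ with a stationary wave adapted to the moving interval and show that the difference tends to zero in an exponentially weighted norm. Write $\xi=\log(\alpha/A_*)$ and $\Psi(z)=U_*(z-\xi)$.

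\textbf{Step 1: weighted conjugation.} Remove the drift by setting $q_{\mathrm{aux}}=e^{-(1-3/(4t))z}\,r$. Modulo $O(t^{-1})$ coefficients, $r$ satisfies
\[
 r_t=r_{zz}-\frac{g(q_{\mathrm{aux}})}{q_{\mathrm{aux}}}\,r .
\]
Here the linear part $r_{zz}+(1-(1-3/4t)^2)r$ contributes only $O(1/t)$. The right boundary value becomes $r(t,R)\approx\alpha R$. This matches $e^{R}\Psi(R)=\alpha R+A_*e^{\xi}(B_*/A_*-\xi)+o(1)$ to leading order; the $O(1)$ discrepancy is irrelevant after dividing by $R$.

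\textbf{Step 2: comparison sub- and supersolutions.} To conclude \eqref{matching:auxiliary-limit}, I would trap $q_{\mathrm{aux}}$ between
\[
 \Psi\bigl(z\mp\epsilon(t)\bigr)\pm\delta(t)\,e^{-z}\bigl(\text{correction}\bigr),
\]
with shifts $\epsilon(t)\to0$. The right boundary amplitude is $\alpha$ up to a relative error $O(1/R)$, so a shift of size $O(1/R)=O(t^{-\gamma})$ in the wave makes the ordering correct at $z=R(t)$.

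At the left boundary the data $h$ are arbitrary in $[0,1]$, while $\Psi(-R)\to1$. This is exactly where the exponential weighting helps. A perturbation $\zeta$ at $z=-R$ of the linearized drift operator $\zeta_{zz}+2\zeta_z+f'(\Psi)\zeta$ behind the front decays like $e^{-\lambda(z+R)}$ with $\lambda>0$, since $f'(1)<0$ there. Hence at bounded $z$ it contributes $O(e^{-\lambda R})$, which is integrable in $t$.

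Concretely, I would build the supersolution as
\[
 \min\Bigl\{1,\ \Psi\bigl(z-\epsilon(t)\bigr)+C\,e^{-\lambda(z+R(t))}+\kappa(t)\,\phi_1(z)\Bigr\},
\]
and the subsolution symmetrically, where $\phi_1$ handles the initial discrepancy. Because no compatibility is assumed and $q_T$ is arbitrary, I would first wait a finite time to use the smoothing of the mild solution. I would then invoke the convergence-to-equilibrium property of the frozen problem: on the fixed interval at large time, the boundary values are nearly $h$ and $\alpha Re^{-R}$.

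\textbf{Step 3: the main obstacle.} The hard part is the initial-data independence. The interval grows, and the zero mode of the linearization around $\Psi$, namely $\Psi'$, is neutral, so any initial perturbation could in principle shift the phase. The remedy is that the right Dirichlet condition pins the amplitude, i.e.\ it kills the translation mode. In weighted variables, the operator $\partial_{zz}-g(q)/q$ on $(-R,R)$ with Dirichlet data has principal eigenvalue at most $-c/R^2$. Deviations therefore decay like $\exp\!\bigl(-c\int^t R(s)^{-2}\,ds\bigr)=\exp(-ct^{1-2\gamma})$, which tends to zero since $\gamma<1/2$.

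I would implement this as an energy estimate for the difference $r-e^{z}\Psi$ in $L^2(-R,R)$ with weight $e^{-(z+R)}$ near the left end. The estimate uses Poincaré with constant $R^{-2}$ and the nonnegativity of $\partial_s(g(s)/s)\cdot s$-type terms only up to $O(q)$ errors. Those errors are controlled because the estimate is applied on the region where $q$ is small, or else by monotonicity of $f$-differences bounded via $\|f'\|_\infty$ on the bounded back region, where $f'(\approx1)<0$ gives stability. The error sources are the boundary mismatches $O(1/R)$, $O(e^{-\lambda R})$ and the $O(1/t)$ coefficients. Combining them yields a differential inequality $E'\le -cR^{-2}E+o(R^{-2})$, so $E\to0$.

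Parabolic regularity on bounded $|z|\le K$ then upgrades this to the uniform convergence \eqref{matching:auxiliary-limit}. The delicate point I expect to fight with is controlling the nonlinear term uniformly in the transition zone, where neither linearization is sign-definite. There I would use the sandwich from Step 2 to first localize $q_{\mathrm{aux}}$ to within $o(1)$ shifts of $\Psi$.
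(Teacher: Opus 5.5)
There is a genuine gap, and it is in Step~3, the only step that actually produces convergence. The operator you invoke, $\partial_{zz}-g(q)/q$, is the coefficient in the equation for $r$ itself; $g(s)/s\ge0$ holds for every KPP reaction, so its Dirichlet gap is fine. The deviation $\delta V=e^zq_{\mathrm{aux}}-e^z\Psi$, however, solves $\delta V_t=\delta V_{zz}-c\,\delta V+\cdots$ with the secant coefficient $c=\int_0^1 g'\bigl(\theta q_{\mathrm{aux}}+(1-\theta)\Psi\bigr)\,d\theta$, where $g'=1-f'$.

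The lemma assumes only Assumption~\ref{hyp:f}. The condition $g'\ge0$ amounts to $f'\le1$, which concavity guarantees but a general KPP reaction need not satisfy. For such $f$, even $g'(\Psi)$ is negative on a bounded interval near the front, by an amount that does not shrink with $R$. The plain Poincar\'e bound $\pi^2/(4R^2)$ is then swamped, and $E'\le -cR^{-2}E+o(R^{-2})$ does not follow. Around the exact wave an $R^{-2}$ gap does exist. It comes from the positive solution $-e^z\Psi'$ of $\varphi''=g'(\Psi)\varphi$ through a ground-state transform, which your argument does not use. Moreover, it controls the secant coefficient only once $q_{\mathrm{aux}}$ is already uniformly close to $\Psi$ in the transition zone.

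Your source for that closeness, the Step~2 sandwich, is circular. Barriers $\Psi(z\mp\epsilon(t))\pm\cdots$ with $\epsilon(t)\to0$ can only be started when the solution already lies near the wave family. Here $q_T$ is an arbitrary measurable function and $h$ an arbitrary $C^1$ function with values in $[0,1]$. For instance, $h\equiv0$ produces an $O(1)$ nonlinear boundary layer at $z=-R$, not a small perturbation of $1$ decaying like $e^{-\lambda(z+R)}$. There is no frozen problem whose equilibrium you can invoke, because $h$, $R(t)$ and the drift all depend on time; proving that convergence is exactly the content of the lemma.

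Even for concave $f$ your error budget does not close on the full range. The residual of $e^z\Psi$ under the drift is about $-\frac{3\alpha}{2t}z$ on $(0,R)$, whose $L^2(-R,R)$ norm is of order $t^{-1}R^{3/2}$. Against the $R^{-2}$ gap this yields only $\|\delta V\|_{L^2}\lesssim t^{-1}R^{7/2}$, which tends to zero only for $\gamma<2/7$, while the lemma is stated for every $\gamma<1/3$.

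The missing idea is the gradient structure of the weighted equation, which needs neither concavity nor closeness. With $V=e^zq$, the problem reads $V_t-V_{zz}+a(t)(V_z-V)+P_V(z,V)=0$, where $a=3/(2t)$ and $P(z,V)=e^{2z}G(e^{-z}V)\ge0$. Up to the small drift, this is the $L^2$ gradient flow of $\int(\frac12V_z^2+P)\,dz$.

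The paper subtracts the divergent part $\frac{\alpha^2}{2}R(t)$ forced by $V(t,R)=\alpha R$. Completing the square, using only $q\le1$, gives $E+\alpha\ge\frac12\int_0^R(V_z-\alpha)^2\,dz\ge0$. The moving-boundary fluxes are integrable; the left one is $O(e^{-R})$ for every $h$, thanks to the weight. The drift is absorbed into the dissipation at cost $t^{3\gamma-2}$, which is exactly where $\gamma<1/3$ enters. Hence $\sup_tE<\infty$ and $\int\!\!\int V_t^2<\infty$.

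Consequently, time translates converge along subsequences to stationary limits $V_\infty$ with $V_\infty''=e^zg(e^{-z}V_\infty)$ and $\int_0^\infty(V_\infty'-\alpha)^2\,dz<\infty$. Since $V_\infty'$ is nondecreasing, this pins the slope $\alpha$ at $+\infty$. A phase-plane argument then identifies $e^{-z}V_\infty=U_*(z-\xi)$ with $\xi=\log(\alpha/A_*)$. Thus the right boundary fixes the phase through an energy bound rather than a spectral gap, and arbitrary $q_T$ and $h$ cost nothing.
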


\begin{proof}
Write $q=q_{\mathrm{aux}}$. Standard parabolic theory, after mapping the moving interval to a fixed interval, gives the bounded solution and positive-time regularity; see~\cite{Lieberman1996}. Comparison gives $0\le q\le1$. All energy calculations below start at $T+1$, so initial corner incompatibilities cause no difficulty. The identities may first be derived for smooth data and then justified by approximation on time intervals separated from $T$.
Recall $g(s)=s-f(s)$ from~\eqref{eq:g}, and set
\[
 G(s)=\int_0^s g(r)\,dr,\qquad V(t,z)=e^zq(t,z).
\]
By \eqref{eq:g}, $0\le g(s)\le Cs^2$, hence $0\le G(s)\le Cs^3$.
The function $V$ satisfies
\begin{equation}\label{matching:weighted-equation}
 V_t-V_{zz}+a(t)(V_z-V)+P_V(z,V)=0,
 \qquad a(t)=\frac3{2t},
\end{equation}
where
\[
 P(z,V)=e^{2z}G(e^{-z}V),\qquad
 P_V(z,V)=e^zg(e^{-z}V)\ge0.
\]
The right boundary condition becomes $V(t,R(t))=\alpha R(t)$, corresponding to the linear leading-edge growth $V\sim\alpha z$. Its gradient energy on the positive half of the interval has the leading contribution $\alpha^2R(t)/2$. Subtracting this contribution, define
\begin{equation}\label{matching:renormalized-energy}
 E(t)=\int_{-R(t)}^{R(t)}
 \left(\frac12V_z^2+P(z,V)\right)\,dz
 -\frac{\alpha^2}2R(t).
\end{equation}
Using that boundary condition and completing the square gives
\begin{align}\label{matching:coercive-identity}
 E(t)+\alpha
 ={}&\frac12\int_{-R(t)}^0V_z^2\,dz
 +\frac12\int_0^{R(t)}(V_z-\alpha)^2\,dz \notag\\
 &+\int_{-R(t)}^{R(t)}P(z,V)\,dz
 +\alpha[1-V(t,0)].
\end{align}
Since $V(t,0)=q(t,0)\le1$, every term on the right is nonnegative.
In particular, $E\ge-\alpha$.

The energy also controls the norm needed to estimate the drift term.
On the negative half of the interval we use $V\le e^z$; on the positive half we use
\[
 V(t,z)=V(t,0)+\alpha z
 +\int_0^z[V_z(t,s)-\alpha]\,ds.
\]
The Cauchy--Schwarz inequality and~\eqref{matching:coercive-identity} give
\begin{equation}\label{matching:drift-control}
 \int_{-R(t)}^{R(t)}(V_z-V)^2\,dz
 \le CR(t)^2[E(t)+\alpha+1]+CR(t)^3.
\end{equation}
The constants below may depend on $\alpha,T,\gamma,f$ and $\|h'\|_{L^\infty([T,\infty))}$, but not on $t$. The resulting energy and dissipation bounds may additionally depend on the initial profile through the finite energy $E(T+1)$.

Differentiate~\eqref{matching:renormalized-energy} and use
\eqref{matching:weighted-equation} to eliminate $V_{zz}-P_V$.
The interior contribution is
\[
 -\int_{-R}^R V_t^2\,dz
 -a(t)\int_{-R}^R(V_z-V)V_t\,dz.
\]
At the moving right endpoint, $V_t+R'V_z=\alpha R'$.
Thus the right boundary contribution, including the renormalization term, is exactly
\begin{equation}\label{matching:right-boundary-flux}
 -\frac{R'}2[V_z(t,R)-\alpha]^2
 +R'P(R,\alpha R).
\end{equation}
Since $P(R,\alpha R)\le C\alpha^3R^3e^{-R}$, its positive part is integrable.
At the left endpoint, write $b(t)=e^{-R(t)}h(t)$. Differentiation along this moving endpoint gives
\[
 V_t(t,-R)-R'V_z(t,-R)=b'(t).
\]
The corresponding boundary contribution is
\begin{equation}\label{matching:left-boundary-flux}
 -V_z(t,-R)b'(t)-\frac{R'}2V_z(t,-R)^2
 +R'P(-R,b).
\end{equation}
Since $R'(t)>0$, Young's inequality gives directly
\[
 -V_z(t,-R)b'-\frac{R'}2V_z(t,-R)^2
 \le\frac{|b'|^2}{2R'}.
\]
Here $b'=e^{-R}(h'-R'h)$ and $P(-R,b)=e^{-2R}G(h)$. Hence the positive part of \eqref{matching:left-boundary-flux} is bounded by
\[
 \frac{|b'|^2}{2R'}+R'P(-R,b)
 \le C\bigl(t^{1-\gamma}+t^{\gamma-1}\bigr)e^{-2t^\gamma},
\]
which is integrable. This polynomial factor can be absorbed into a weaker exponential bound $Ce^{-R(t)}$.

Use Young's inequality to absorb the drift term into the dissipation.
Together with~\eqref{matching:drift-control}, this gives
\begin{align}\label{matching:energy-dissipation}
 E'(t)\le{}&-\frac12\int_{-R(t)}^{R(t)}V_t^2\,dz
 +Ct^{2\gamma-2}[E(t)+\alpha+1] \notag\\
 &+Ct^{3\gamma-2}+C(1+R(t)^3)e^{-R(t)}.
\end{align}
Both $t^{2\gamma-2}$ and $t^{3\gamma-2}$ are integrable, the latter being the term that requires $\gamma<1/3$; the exponential remainder is integrable as well. Apply Gronwall's inequality to $E+\alpha+1\ge1$,
and then integrate~\eqref{matching:energy-dissipation}, to obtain
\begin{equation}\label{matching:finite-dissipation}
 \sup_{t\ge T+1}E(t)<\infty,\qquad
 \int_{T+1}^\infty\int_{-R(t)}^{R(t)}V_t^2\,dz\,dt<\infty.
\end{equation}

We next identify the limits of time translates. Take any sequence $t_n\to\infty$. On fixed compact subsets of space-time, $0\le q\le1$ and interior parabolic estimates give local compactness of $V(t_n+s,z)$. After passing to a subsequence, these functions and their first spatial derivatives converge uniformly on compact subsets. For fixed $K,S>0$, the expanding domain eventually contains $[-K,K]$ throughout $[t_n-S,t_n+S]$, and \eqref{matching:finite-dissipation} yields
\[
 \int_{t_n-S}^{t_n+S}\int_{-K}^K|V_t(t,z)|^2\,dz\,dt
 \longrightarrow0.
\]
Thus every subsequential limit is independent of $s$. Passing to the limit in the equation in the distributional sense, using $a(t_n+s)\to0$ uniformly for bounded $s$, and then applying local regularity, we find that its stationary profile $V_\infty$ satisfies
\begin{equation}\label{matching:stationary-limit}
 V_\infty''=e^zg(e^{-z}V_\infty),\qquad
 0\le e^{-z}V_\infty\le1.
\end{equation}
By~\eqref{matching:coercive-identity} and the uniform energy bound, we may first pass to the limit on $[0,K]$ and then let $K\to\infty$ to obtain
\begin{equation}\label{matching:tail-slope-control}
 \int_0^\infty[V_\infty'(z)-\alpha]^2\,dz<\infty.
\end{equation}
Equation~\eqref{matching:stationary-limit} shows that $V_\infty'$ is nondecreasing.
Consequently,~\eqref{matching:tail-slope-control} forces
\begin{equation}\label{matching:identified-slope}
 \lim_{z\to\infty}V_\infty'(z)=\alpha,
 \qquad V_\infty(z)=\alpha z+o(z).
\end{equation}
In particular, $V_\infty\not\equiv0$.

It remains to identify this stationary profile as a translated wave. Set $Q=e^{-z}V_\infty$. Then
\begin{equation}\label{matching:wave-ode}
 Q''+2Q'+f(Q)=0,\qquad 0\le Q\le1,\qquad Q(+\infty)=0.
\end{equation}
To identify the limit at the other end, set $F(s)=\int_0^s f(r)\,dr$.
Local ODE estimates give a uniform bound on the derivative of the bounded
solution $Q$, whose mechanical energy satisfies
\[
 \frac d{dz}\left(\frac12Q'^2+F(Q)\right)=-2Q'^2.
\]
The energy has finite limits at both ends, so $Q'\in L^2(\mathbb R)$.
The boundedness of $Q''=-2Q'-f(Q)$ makes $Q'$ uniformly continuous,
and therefore $Q'(\pm\infty)=0$. The boundedness of
$Q'''=-2Q''-f'(Q)Q'$ also makes $Q''$ uniformly continuous. Together with the finite limits of $Q'$ at both ends, this implies $Q''(\pm\infty)=0$: otherwise, uniform continuity would force changes of a fixed size in $Q'$ along intervals escaping to one of the two ends. The ODE therefore gives $f(Q(z))\to0$. For every $0<\delta<1/2$, positivity of $f$ on $[\delta,1-\delta]$ implies that $Q$ eventually avoids this interval at either end. Continuity prevents repeated transitions between the neighborhoods of $0$ and $1$ on either tail. Letting $\delta\downarrow0$ shows that each end limit exists and is either $0$ or $1$. If $Q(-\infty)=0$, both limiting
energies vanish, and the energy identity forces $Q'\equiv0$, contradicting
\eqref{matching:identified-slope}. Thus $Q(-\infty)=1$. The strong maximum
principle gives $0<Q<1$. At any point where $Q'=0$, one has
$Q''=-f(Q)<0$, so every critical point would be a strict local maximum. If $Q'$ were positive somewhere, it would remain positive to the left, since a transition from negative to positive would require a local minimum. This contradicts $Q(-\infty)=1$ and $Q<1$. Thus $Q'\le0$, and a zero of $Q'$ would itself force positive values immediately to its left. Hence $Q'<0$.
By the classical uniqueness recalled with~\eqref{eq:wave}, $Q(z)=U_*(z-\xi)$.

The expansion of the wave tail gives
\[
 \lim_{z\to\infty}\frac{e^zU_*(z-\xi)}z=A_*e^\xi.
\]
Comparison with~\eqref{matching:identified-slope} yields
$\xi=\log(\alpha/A_*)$. Every subsequence of time translates has the
same stationary limit. Hence the full family converges locally uniformly,
as asserted in~\eqref{matching:auxiliary-limit}.
\end{proof}

\subsection{Comparison with the actual solution and uniqueness of the phase}

We apply the matching lemma with the actual initial profile and left trace, and with two nearby right-hand amplitudes.

\begin{proof}[Proof of the phase statement in Theorem~\ref{thm:phase}]
Fix a finite $\beta\ge0$, choose $0<\gamma<1/3$, and set $R(t)=t^\gamma$. Given $0<\varepsilon<\alpha_\beta$, Lemma~\ref{lem:leading-edge} allows us to choose $T$ sufficiently large that
\begin{equation}\label{phase:front-amplitude-bracket}
 (\alpha_\beta-\varepsilon)R(t)e^{-R(t)}
 \le u^\beta(t,m(t)+R(t))
 \le(\alpha_\beta+\varepsilon)R(t)e^{-R(t)},
 \qquad t\ge T.
\end{equation}
Increase $T$ if necessary so that the moving interval stays a fixed positive distance from the interface and $(\alpha_\beta+\varepsilon)R(t)e^{-R(t)}\le1$ for all $t\ge T$.

Set $\widehat u_\beta(t,z)=u^\beta(t,m(t)+z)$. Since $m'(t)=2-3/(2t)$, this function satisfies the same equation as the auxiliary solutions in \eqref{matching:auxiliary-equation}. We apply Lemma~\ref{matching:energy-lemma} with amplitudes $\alpha_\beta\pm\varepsilon$, common initial data
\[
 q_{\mathrm{aux}}^\pm(T,z)=\widehat u_\beta(T,z),\qquad |z|<R(T),
\]
and common left boundary data
\[
 h(t)=u^\beta(t,m(t)-R(t)).
\]
These data lie in $[0,1]$. Moreover,
\[
 h'(t)=u_t^\beta(t,m(t)-R(t))
 +(m'(t)-R'(t))u_x^\beta(t,m(t)-R(t)).
\]
Interior parabolic estimates away from the interface bound $u_t^\beta$ and $u_x^\beta$ along this curve, while $m'-R'$ is bounded. Thus $h\in C^1([T,\infty))$ has bounded derivative, with constants allowed to depend on the fixed $\beta$. All hypotheses of the matching lemma are satisfied. Possible incompatibilities at the initial right corners are covered by its formulation.

The two auxiliary problems have the same initial profile and left boundary value as $\widehat u_\beta$, and their right boundary values bracket its trace by \eqref{phase:front-amplitude-bracket}. Comparison therefore gives
\[
 q_{\mathrm{aux}}^-(t,z)
 \le\widehat u_\beta(t,z)
 \le q_{\mathrm{aux}}^+(t,z),\qquad t>T,\quad |z|\le R(t).
\]
For every fixed $K>0$, the matching lemma gives uniform convergence of these two bounds on $[-K,K]$ to
\[
 U_*\!\left(z-\log\frac{\alpha_\beta-\varepsilon}{A_*}\right)
 \quad\hbox{and}\quad
 U_*\!\left(z-\log\frac{\alpha_\beta+\varepsilon}{A_*}\right),
\]
respectively. Both profiles tend uniformly on $[-K,K]$ to $U_*(z-\xi_\beta)$ as $\varepsilon\downarrow0$. Taking $t\to\infty$ first and then $\varepsilon\downarrow0$ in the comparison yields
\begin{equation}\label{phase:local-wave-limit}
 u^\beta(t,m(t)+z)\longrightarrow U_*(z-\xi_\beta),
 \qquad \xi_\beta=\log\frac{\alpha_\beta}{A_*},
\end{equation}
locally uniformly in $z$.

We extend this convergence to the entire receiving half-line using \eqref{eq:right-monotone}. Fix $K>0$ and take $t$ large enough that $m(t)>K$. On $0\le x\le m(t)-K$, their deviations from $1$ are controlled by the values at $x=m(t)-K$, since
\[
 u^\beta(t,m(t)-K)\le u^\beta(t,x)\le1,
 \qquad U_*(-K-\xi_\beta)\le U_*(x-m(t)-\xi_\beta)\le1.
\]
At $x=0$, these inequalities are understood using the right-hand trace. On $x\ge m(t)+K$, their values are controlled by those at $x=m(t)+K$, since
\[
 0\le u^\beta(t,x)\le u^\beta(t,m(t)+K),
 \qquad 0\le U_*(x-m(t)-\xi_\beta)\le U_*(K-\xi_\beta).
\]
On the intervening interval, \eqref{phase:local-wave-limit} gives uniform convergence. Consequently,
\[
 \limsup_{t\to\infty}\sup_{x\ge0}
 |u^\beta(t,x)-U_*(x-m(t)-\xi_\beta)|
 \le\max\{1-U_*(-K-\xi_\beta),\,U_*(K-\xi_\beta)\}.
\]
Letting $K\to\infty$ proves \eqref{phase:uniform-wave}.

Finally, suppose that another phase $\widetilde\xi_\beta$ gives the same convergence. Evaluating both limits at $x=m(t)+z$ for any fixed $z$ yields
\[
 U_*(z-\xi_\beta)=U_*(z-\widetilde\xi_\beta).
\]
Strict monotonicity of $U_*$ implies $\widetilde\xi_\beta=\xi_\beta$, proving uniqueness.
\end{proof}

\subsection{First-arrival bounds without temporal monotonicity}

We combine phase convergence, spatial monotonicity, and the finite-time tail bound to bracket the first-arrival times.

\begin{proof}[Proof of the arrival-time statement in Theorem~\ref{thm:phase}]
Fix $\beta\in[0,\infty)$ and $\eta\in(0,1)$, and set $z_\eta=U_*^{-1}(\eta)$.
For every fixed $R>0$,~\eqref{phase:uniform-wave} and $m(t)\to\infty$ imply
$u^\beta(t,R)\to1$. Thus the set in~\eqref{eq:arrival} is nonempty, and
$T_\eta^\beta(R)<\infty$ for every $R>0$.

The same convergence gives $u^\beta(t,0^+)\to1$, whereas
\eqref{phase:finite-time-tail} implies $u^\beta(t,x)\to0$ as $x\to\infty$
for each fixed $t>0$. Continuity and strict spatial monotonicity therefore
provide a unique positive level-set position $X_\eta^\beta(t)$ for all
sufficiently large $t$. To locate it, fix $\delta>0$. Since
\[
 U_*(z_\eta-\delta)>\eta>U_*(z_\eta+\delta),
\]
\eqref{phase:uniform-wave} gives, for all sufficiently large $t$,
\[
 u^\beta(t,m(t)+\xi_\beta+z_\eta-\delta)>\eta,
 \qquad
 u^\beta(t,m(t)+\xi_\beta+z_\eta+\delta)<\eta.
\]
Both spatial arguments are then positive. Strict spatial monotonicity places
$X_\eta^\beta(t)$ between them. As $\delta>0$ is arbitrary, this proves
\begin{equation}\label{phase:level-location}
 \begin{gathered}
 u^\beta(t,X_\eta^\beta(t))=\eta,\\
 X_\eta^\beta(t)=m(t)+\xi_\beta+z_\eta+o(1).
 \end{gathered}
\end{equation}

For a fixed $\delta>0$, choose $T_\delta$ large enough that $m$ is strictly
increasing on $[T_\delta,\infty)$ and
\[
 |X_\eta^\beta(t)-m(t)-\xi_\beta-z_\eta|<\delta,
 \qquad t\ge T_\delta.
\]
For sufficiently large $R$,~\eqref{phase:finite-time-tail} gives
$u^\beta(t,R)<\eta$ for $0<t\le T_\delta$. Since $m(t)\to\infty$, the equations
\[
 m(t_-(R))+\xi_\beta+z_\eta=R-\delta,
 \qquad
 m(t_+(R))+\xi_\beta+z_\eta=R+\delta
\]
have unique roots in $[T_\delta,\infty)$, with
$T_\delta<t_-(R)<t_+(R)$, once $R$ is sufficiently large.
For $T_\delta\le t\le t_-(R)$, we have
\[
 X_\eta^\beta(t)<m(t)+\xi_\beta+z_\eta+\delta\le R,
\]
hence $u^\beta(t,R)<\eta$. At $t=t_+(R)$, we instead have
$X_\eta^\beta(t)>m(t)+\xi_\beta+z_\eta-\delta=R$, hence
$u^\beta(t,R)>\eta$. Together with the exclusion of arrivals before
$T_\delta$, these inequalities yield
\begin{equation}\label{phase:arrival-bracket}
 t_-(R)\le T_\eta^\beta(R)\le t_+(R).
\end{equation}
Applying the increasing function $m$ to this bracket gives
\[
 R-\xi_\beta-z_\eta-\delta
 \le m(T_\eta^\beta(R))
 \le R-\xi_\beta-z_\eta+\delta.
\]
Since this holds for every $\delta>0$ and all sufficiently large $R$, we obtain
\begin{equation}\label{phase:arrival-inverse}
 m(T_\eta^\beta(R))=R-\xi_\beta-z_\eta+o(1).
\end{equation}

Apply the same argument with $\beta=0$ and subtract the two identities.
The mean value theorem gives a point $\theta_R$ between the two arrival
times such that
\[
 m'(\theta_R)\big(T_\eta^\beta(R)-T_\eta^0(R)\big)
 =\xi_0-\xi_\beta+o(1).
\]
If the arrival times coincide, take $\theta_R$ equal to their common value.
By~\eqref{phase:finite-time-tail}, both arrival times tend to infinity as
$R\to\infty$. Hence $\theta_R\to\infty$ and
$m'(\theta_R)=2-3/(2\theta_R)\to2$, proving~\eqref{phase:delay-formula}.
The level-dependent term $z_\eta$ cancels upon subtraction, so the limiting
delay is independent of $\eta$.
\end{proof}

The phase and delay formulas reduce the remaining parameter-dependence questions to continuity and strict ordering of $\alpha_\beta$, which we address next.

\section{Continuous dependence on the resistance and strict monotonicity}
\label{order:sec}

This section completes the continuity assertion of Theorem~\ref{thm:phase} and proves Theorem~\ref{thm:monotonicity}. Throughout, Assumptions~\ref{hyp:f}--\ref{hyp:data} hold. They suffice for continuity and the endpoint limits below; Assumption~\ref{hyp:concave} is added for the subsequent strict comparison. All solutions start from the same initial datum, which need not be monotone in the transition region. By the phase and delay formulas established in Section~\ref{sec:phase}, it suffices to study the leading-edge amplitude $\alpha_\beta$, since
\begin{equation}
 \tau_\beta=\frac12\log\frac{\alpha_0}{\alpha_\beta}.
 \label{order:tau-alpha}
\end{equation}

\subsection{Continuity and endpoint limits}

Recall the reflected variables $p_\beta,q_\beta,S_\beta,D_\beta$ from
\eqref{eq:fold}. By~\eqref{eq:fold-bc}, $S_\beta$ satisfies a Neumann condition,
whereas $D_\beta$ satisfies a Robin condition with coefficient
$k_\beta=2/\beta$ for $\beta>0$, and a Dirichlet condition for $\beta=0$.
For use here and in the comparison argument below, write the reaction terms in
\eqref{eq:fold-pde} as
\begin{equation}
 F(S,D)=\frac{f(S+D)+f(S-D)}2,\qquad
 H(S,D)=\frac{f(S+D)-f(S-D)}2.
 \label{order:FH}
\end{equation}
All reflected solutions lie in the convex set
$\{(S,D):0\le S-D\le S+D\le1\}$ by Lemma~\ref{lem:reflection-comparison}.
On this set, $F,H$ are bounded and Lipschitz with constants depending only on $f$.

Recall the Gaussian kernel $G_t$ from Section~\ref{sec:preliminaries}.
For $k>0$ and $\beta=2/k$, the antisymmetric part of the transmission kernel
\eqref{eq:barrier-kernel} gives the Robin kernel:
\[
 K_k(t,x,y)=P_\beta(t,x,y)-P_\beta(t,x,-y),\qquad x,y>0.
\]
The corresponding sum is $G_t(x-y)+G_t(x+y)$, the Neumann kernel governing
$S_\beta$. Substituting $r=ks$ in the integral representation of $P_\beta$
therefore gives the heat kernel for $v_x(0)=kv(0)$ as
\begin{equation}
 K_k(t,x,y)=G_t(x-y)+G_t(x+y)
 -2k\int_0^\infty e^{-ks}G_t(x+y+s)\,ds,
 \qquad k\ge0.
 \label{order:robin-kernel}
\end{equation}
Write $K_D(t,x,y)=G_t(x-y)-G_t(x+y)$ for the Dirichlet kernel.
For $k_n\to k>0$, integration in $y$ gives
\[
 \sup_{x\ge0}\int_0^\infty|K_{k_n}(t,x,y)-K_k(t,x,y)|\,dy
 \le 2\int_0^\infty|k_ne^{-k_ns}-ke^{-ks}|\,ds\longrightarrow0,
\]
uniformly in $t>0$. For the Dirichlet limit, use
\[
 K_k-K_D
 =2\int_0^\infty ke^{-ks}
 [G_t(x+y)-G_t(x+y+s)]\,ds.
\]
The Gaussian translation bound
\[
 \int_0^\infty|G_t(x+y)-G_t(x+y+s)|\,dy
 \le C\min\{1,s/\sqrt t\}
\]
holds uniformly for $x\ge0$. Consequently,
\begin{equation}
 \sup_{x\ge0}\int_0^\infty|K_k(t,x,y)-K_D(t,x,y)|\,dy
 \le C\min\{1,(k\sqrt t)^{-1}\},\qquad k>0.
 \label{order:dirichlet-limit}
\end{equation}
These integral estimates apply to bounded initial data without any spatial integrability assumption.

To pass to the nonlinear solutions, let $\mathsf N(t)$ denote the Neumann
heat semigroup, and let $\mathsf R_\beta(t)$ denote the Robin semigroup with
coefficient $2/\beta$ for $\beta>0$, or the Dirichlet semigroup for $\beta=0$.
These semigroups are contractions on $L^\infty(0,\infty)$. Given
$\beta_n\to\beta<\infty$, set
\[
 \varepsilon_n(t)=\|\mathsf R_{\beta_n}(t)-\mathsf R_\beta(t)\|_{L^\infty\to L^\infty}.
\]
The kernel estimates show that $\varepsilon_n\le2$ and
$\varepsilon_n\to0$ uniformly on every compact time interval contained in $(0,\infty)$.
The common initial profiles are
$S_{\mathrm{in}}(x)=D_{\mathrm{in}}(x)=u_0(-x)/2$.
The Duhamel formulas for~\eqref{eq:fold-pde} read
\[
 \begin{split}
 S_\beta(t)&=\mathsf N(t)S_{\mathrm{in}}
 +\int_0^t\mathsf N(t-s)F(S_\beta(s),D_\beta(s))\,ds,\\
 D_\beta(t)&=\mathsf R_\beta(t)D_{\mathrm{in}}
 +\int_0^t\mathsf R_\beta(t-s)H(S_\beta(s),D_\beta(s))\,ds.
 \end{split}
\]
For a fixed finite $T>0$, subtracting these formulas and using the boundedness
and Lipschitz continuity of $F,H$ gives, for $0<t\le T$,
\[
 \begin{split}
 E_n(t)&:=\|S_{\beta_n}(t)-S_\beta(t)\|_\infty
       +\|D_{\beta_n}(t)-D_\beta(t)\|_\infty\\
 &\le A_n(t)+L\int_0^t E_n(s)\,ds,\qquad
 A_n(t)=C\varepsilon_n(t)+C\int_0^t\varepsilon_n(s)\,ds.
 \end{split}
\]
Here $C,L$ depend only on the common data bounds and $f$, and are independent
of $n$ and the resistance values. Gronwall's inequality yields
$E_n(t)\le A_n(t)+L\int_0^t e^{L(t-s)}A_n(s)\,ds$.
Dominated convergence gives $\int_0^T A_n(s)\,ds\to0$, while
$A_n\to0$ uniformly on $[\delta,T]$ for every $0<\delta<T$.
Since $q_\beta=S_\beta-D_\beta$, we conclude that
\begin{equation}
 \sup_{\delta\le t\le T}\sup_{x\ge0}
 |q_{\beta_n}(t,x)-q_\beta(t,x)|\longrightarrow0,
 \qquad \beta_n\to\beta<\infty.
 \label{order:finite-time-cont}
\end{equation}
Boundary values here are right-hand traces. In particular, this proves finite-time continuous dependence at zero resistance as well as at every positive finite resistance.

We now transfer this continuity to the amplitude. Fix $0<\rho<1/3$ and
use the moment $\mathcal J_\beta(T)$ from~\eqref{front:uniform-moment}, with
$\ell(T)=m(T)+T^\rho$. Choose $T$ above the common threshold in
Lemma~\ref{lem:leading-edge}, so that also $\ell(T)>0$.
The Gaussian bound~\eqref{eq:uniform-gaussian} supplies an integrable majorant
for $(x-\ell(T))e^xq_\beta(T,x)$ on $[\ell(T),\infty)$, independent of $\beta$.
Thus~\eqref{order:finite-time-cont} and dominated convergence imply
$\mathcal J_{\beta_n}(T)\to\mathcal J_\beta(T)$ for each such fixed $T$.
The uniform moment estimate gives
\begin{equation}
 2\sqrt\pi\,|\alpha_{\beta_n}-\alpha_\beta|
 \le |\mathcal J_{\beta_n}(T)-\mathcal J_\beta(T)|
       +2C_\rho T^{\rho-1/2}.
 \label{order:amplitude-cont-bound}
\end{equation}
First let $n\to\infty$ and then $T\to\infty$. This proves continuity of
$\alpha_\beta$ at every finite parameter value. Since $\alpha_\beta>0$,
\eqref{order:tau-alpha} proves continuity of $\tau_\beta$ on $[0,\infty)$;
in particular,
\begin{equation}
 \lim_{\beta\downarrow0}\tau_\beta=0.
 \label{order:zero-endpoint}
\end{equation}

The same uniform moment estimate also determines the strong-barrier endpoint,
without using concavity or parameter ordering. For $\beta>0$, the flux defined
in~\eqref{eq:reflection} satisfies
\begin{equation}
 0\le j_\beta(t)
 =\frac{p_\beta(t,0)-q_\beta(t,0)}\beta\le\frac1\beta,
 \qquad t>0.
 \label{order:flux-bound}
\end{equation}
Since $q_\beta(0,x)=0$ for $x>0$ and $f(q_\beta)\le q_\beta$, comparison
with the linear equation $v_t=v_{xx}+v$ with zero initial data and constant
boundary input $-v_x(t,0)=1/\beta$ yields
\begin{align}
 q_\beta(T,x)
 &\le\frac1\beta\int_0^T
 \frac{e^{T-s}}{\sqrt{\pi(T-s)}}
 \exp\left(-\frac{x^2}{4(T-s)}\right)\,ds \notag\\
 &\le\frac{2e^T\sqrt{T/\pi}}\beta
 \exp\left(-\frac{x^2}{4T}\right).
 \label{order:neumann-upper}
\end{align}
Integrating this bound against the weight defining $\mathcal J_\beta(T)$
shows that $\mathcal J_\beta(T)\to0$ as $\beta\to\infty$ for each fixed,
sufficiently large $T$. Since the error in~\eqref{front:uniform-moment}
is uniform over all finite $\beta$, we obtain
\[
 0\le\limsup_{\beta\to\infty}\alpha_\beta
 \le\frac{C_\rho}{2\sqrt\pi}T^{\rho-1/2}.
\]
Now let $T\to\infty$ and use~\eqref{order:tau-alpha} to conclude that
\begin{equation}
 \alpha_\beta\longrightarrow0,\qquad
 \tau_\beta\longrightarrow+\infty
 \qquad(\beta\to\infty).
 \label{order:infinite-endpoint}
\end{equation}

We next establish strict ordering, starting with a pointwise comparison of the solutions.

\subsection{Pointwise comparison under concavity}

We now impose concavity in addition to the assumptions used for continuity.
The reflected system makes its role explicit: it produces nonnegative coupling
coefficients, while the difference in resistance supplies a strictly positive boundary input.

\begin{proposition}\label{prop:pointwise-order}
Under Assumptions~\ref{hyp:f}--\ref{hyp:concave}, solutions with the same initial datum satisfy
\begin{equation}
 q_{\beta_1}(t,x)>q_{\beta_2}(t,x)
 \qquad(t>0,\ x>0),\qquad 0\le\beta_1<\beta_2<\infty.
 \label{order:pointwise}
\end{equation}
\end{proposition}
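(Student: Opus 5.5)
The plan is to work in the reflected variables \eqref{eq:fold}, where the two solutions have interior equations coupled only through the reaction, and the difference in resistance enters only through the Robin condition for $D$. Write $k_i=2/\beta_i$, with the convention that $\beta_1=0$ means a Dirichlet condition for $D_{\beta_1}$. Set
\[
 s=S_{\beta_1}-S_{\beta_2},\qquad d=D_{\beta_2}-D_{\beta_1},
\]
so that $q_{\beta_1}-q_{\beta_2}=s+d$. Both $s$ and $d$ vanish initially, because the initial profiles are common.

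\textbf{Boundary conditions.} The Neumann condition gives $s_x(t,0)=0$. For $\beta_1>0$, \eqref{eq:fold-bc} gives
\[
 -d_x(t,0)+k_1d(t,0)=(k_1-k_2)D_{\beta_2}(t,0).
\]
The right-hand side is strictly positive by Lemma~\ref{lem:reflection-comparison}, since $k_1>k_2$ and $D_{\beta_2}(t,0)>0$. If $\beta_1=0$, it is replaced by the Dirichlet condition $d(t,0)=D_{\beta_2}(t,0)>0$.

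\textbf{Interior equations.} Writing the reaction differences in \eqref{order:FH} as line integrals along the segment joining $(S_{\beta_2},D_{\beta_2})$ to $(S_{\beta_1},D_{\beta_1})$, I get a linear system
\[
 s_t=s_{xx}+a_{11}s+a_{12}d,\qquad d_t=d_{xx}+a_{21}s+a_{22}d.
\]
Here $a_{12}=-\overline{F_D}$ and $a_{21}=-\overline{H_S}$, and both partial derivatives equal $\tfrac12\bigl(f'(S+D)-f'(S-D)\bigr)$. On the convex set $\{0\le S-D\le S+D\le1\}$ we have $D\ge0$, so Assumption~\ref{hyp:concave} makes this quantity nonpositive. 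Averages over a segment in that convex set keep the sign, so $a_{12},a_{21}\ge0$ and the system is cooperative. All coefficients are bounded by $\|f'\|_\infty$ and are continuous for $t>0$.

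\textbf{Nonnegativity, then strictness.} First I show $s,d\ge0$ by a comparison principle for cooperative systems with a Neumann condition on $s$ and the inhomogeneous Robin (or Dirichlet) condition on $d$. Strictness then follows in two steps.
\begin{enumerate}
\item The function $d$ satisfies $d_t-d_{xx}-a_{22}d=a_{21}s\ge0$, has zero initial data, and has a boundary input that is strictly positive for $t>0$. Hence $d\not\equiv0$ on every time interval $(0,t_0]$, and the strong maximum principle gives $d>0$ for $t>0$, $x>0$.
\item Since $s\ge0$, this yields $q_{\beta_1}-q_{\beta_2}=s+d>0$, which is \eqref{order:pointwise}. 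The inequality $s>0$ also follows from $a_{12}d\ge0$, but it is not needed.
\end{enumerate}

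\textbf{Main obstacle.} The hard step is to justify this comparison at positive times when the initial data are only measurable, so that the coefficients are not controlled as $t\downarrow0$. I would avoid a classical maximum principle near $t=0$. Instead I would run the Duhamel/Picard argument from Proposition~\ref{prop:well-posedness} directly on the pair $(s,d)$.
\begin{enumerate}
\item Add $L$ to both diagonal terms, with $L\ge\|f'\|_\infty$, so that every coefficient of the shifted system is nonnegative.
\item Use the damped Neumann kernel for $s$ and the damped Robin kernel \eqref{order:robin-kernel} for $d$. Both kernels are positive.
\item Represent the boundary input through the Robin Poisson term $(k_1-k_2)\int_0^tK_{k_1}(t-\tau,x,0)D_{\beta_2}(\tau,0)\,d\tau\ge0$. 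For $\beta_1=0$, use the Dirichlet Poisson kernel instead.
\end{enumerate}
Picard iteration then preserves nonnegativity and converges in $L^\infty$ on short intervals, and these intervals can be iterated. One point needs checking: the reflected Duhamel formulas must reproduce exactly this linearized representation for the difference. I would verify it by subtracting the two nonlinear Duhamel formulas, after first rewriting the $D_{\beta_2}$ formula with the Robin semigroup of coefficient $k_1$, so that the mismatch appears as the positive boundary source above.
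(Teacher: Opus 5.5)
Your proposal is correct and follows essentially the paper's proof. It uses the same reflected differences, the same concavity-based cooperative linearization, the same damped positive-semigroup Picard argument for nonnegativity, and the strong maximum principle driven by a strictly positive boundary input. The only difference is cosmetic: you write the Robin input with coefficient $k_1$ and source $(k_1-k_2)D_{\beta_2}(t,0)$, and for $\beta_1=0$ the Dirichlet data $D_{\beta_2}(t,0)>0$. The paper uses coefficient $k_2$ with source $(k_1-k_2)D_{\beta_1}(t,0)$, and for $\beta_1=0$ the Hopf input $(D_0)_x(t,0)>0$; both boundary identities are valid.
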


\begin{proof}
First suppose $0<\beta_1<\beta_2$ and put $k_i=2/\beta_i$, so $k_1>k_2$.
Use the subscript $i$ for the reflected quantities in~\eqref{eq:fold}.
Lemma~\ref{lem:reflection-comparison} gives
\begin{equation}
 D_i(t,x)>0\qquad(t>0,\ x\ge0).
 \label{order:D-positive}
\end{equation}
Set
\begin{equation}
 r=S_1-S_2,\qquad h=D_2-D_1.
 \label{order:rh}
\end{equation}
Both states and the line segment joining them lie in the convex set
$\{(S,D):0\le S-D\le S+D\le1\}$. On this set concavity gives
\begin{equation}
 F_S=H_D=\frac{f'(S+D)+f'(S-D)}2,\qquad
 F_D=H_S=\frac{f'(S+D)-f'(S-D)}2\le0.
 \label{order:jacobian}
\end{equation}
At each $(t,x)$, write
$(S_\theta,D_\theta)=(1-\theta)(S_2,D_2)+\theta(S_1,D_1)$ and define
\[
 A=\int_0^1 F_S(S_\theta,D_\theta)\,d\theta,
 \qquad B=-\int_0^1 F_D(S_\theta,D_\theta)\,d\theta.
\]
These coefficients are bounded and $B\ge0$. The identities in
\eqref{order:jacobian} give the cooperative system
\begin{equation}
 \begin{cases}
 r_t=r_{xx}+Ar+Bh,\\
 h_t=h_{xx}+Br+Ah.
 \end{cases}
 \label{order:cooperative}
\end{equation}
The common initial datum gives zero initial data for $(r,h)$, and the boundary conditions are
\begin{equation}
 r_x(t,0)=0,\qquad
 -h_x(t,0)+k_2h(t,0)=(k_1-k_2)D_1(t,0)>0.
 \label{order:rh-bc}
\end{equation}
For the cooperative comparison, choose $L\ge\|A\|_\infty$ and use the
damped scalar Neumann and Robin semigroups. Both preserve positivity;
the nonnegative Robin boundary input also has a nonnegative response by
scalar comparison. In the Duhamel system the remaining coefficients are
$A+L\ge0$ and $B\ge0$, so successive approximation preserves
nonnegativity and gives $r,h\ge0$. This mild argument applies to the
measurable initial datum without initial compatibility conditions.
For positive times, $h_t-h_{xx}-Ah=Br\ge0$.
The strictly positive boundary input in~\eqref{order:rh-bc} excludes an
identically zero solution on any positive time interval. The strong maximum
principle therefore gives $h(t,x)>0$ for $t>0,x>0$.
Since $q_{\beta_1}-q_{\beta_2}=r+h$, this proves~\eqref{order:pointwise}
when $\beta_1>0$. Strict concavity is not needed: strictness comes from the
boundary input, even if the coupling coefficient $B$ vanishes.

If $\beta_1=0$, use $r=S_0-S_2$ and $h=D_2-D_0$.
The same coefficient construction gives~\eqref{order:cooperative}.
Here $D_0(t,0)=0$ and $D_0>0$ in the interior by
Lemma~\ref{lem:reflection-comparison}; the Hopf lemma gives
$(D_0)_x(t,0)>0$. Thus the boundary condition for $h$ becomes
\begin{equation}
 -h_x(t,0)+k_2h(t,0)=(D_0)_x(t,0)>0.
 \label{order:zero-rh-bc}
\end{equation}
The same mild comparison and positive-time strong maximum principle give
$r\ge0$ and $h>0$, completing the proof.
\end{proof}

\subsection{Strict ordering of the phases}

Proposition~\ref{prop:pointwise-order} and the amplitude limits in
Lemma~\ref{lem:leading-edge} give only
$\alpha_{\beta_1}\ge\alpha_{\beta_2}$.
To obtain strict ordering, we establish a positive lower bound for the
normalized difference in the leading edge and then use the existence of its
limit. The following auxiliary lemma does not require concavity and will
also be used in the weak-barrier analysis.

\begin{lemma}
\label{lem:positive-tail}
Let $W(t,x)>0$ be a classical solution for $t>0,x>0$, bounded on
$[t_0,T_1]\times(0,\infty)$ for every $0<t_0<T_1<\infty$, and satisfying
\begin{equation}
 W_t=W_{xx}+c(t,x)W,
 \label{order:W-pde}
\end{equation}
where $c$ is bounded. Let $m$ be the reference position defined in~\eqref{eq:m}.
Assume that, for all sufficiently large $t$ and all $z\geq0$,
\begin{equation}
 c(t,m(t)+z)\geq1-Ct e^{-z}.
 \label{order:potential-tail}
\end{equation}
Then, for every $0<\gamma<1/3$,
\begin{equation}
 \liminf_{t\to\infty}
 \frac{e^{t^\gamma}W(t,m(t)+t^\gamma)}{t^\gamma}>0.
 \label{order:positive-amplitude}
\end{equation}
The positive lower bound may depend on $W$, $\gamma$, and the coefficient
bounds and starting time in~\eqref{order:potential-tail}; no uniform lower
bound over a family of positive solutions is asserted. No initial trace at
$t=0$ or boundary condition at $x=0$ is required.
\end{lemma}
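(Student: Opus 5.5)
The plan is to build a positive subsolution in the leading-edge self-similar variables. It will run the positivity argument from the proof of Lemma~\ref{lem:leading-edge} in reverse, with the potential now acting as a lower bound. Fix $0<\gamma<1/3$ and choose $0<\rho<\gamma$. We work on the region $z\ge t^\rho$, i.e.\ to the right of the positive cutoff $\ell(t)=m(t)+t^\rho$. Set
\[
 P(\log t,y)=t\,e^{x-2t}W(t,x),\qquad x=\ell(t)+\sqrt t\,y,\quad y\ge0.
\]
The same direct computation as for $P_W$ in Corollary~\ref{cor:signed-amplitudes} gives
\[
 P_\sigma+\mathcal L P+\vartheta(\sigma)P_y+A(\sigma,y)P=0,
\]
with $\vartheta=-\rho d+\tfrac32e^{-\sigma/2}$ and $A=t(1-c)$. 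The hypothesis \eqref{order:potential-tail} gives $A\le Ct^2e^{-t^\rho}$ on this region. The lower bound on $c$ is all that matters: $A$ may be very negative, which only helps. We write $A^+=\max\{A,0\}$, so that $\|A^+(\sigma,\cdot)\|_\infty\le Me^{M\sigma-ce^{\rho\sigma}}$ is integrable in $\sigma$. The boundary value $P(\sigma,0)$ is positive because $W>0$, and we will simply discard it.

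The comparison function is as follows. Choose $\psi\in C_c^\infty((1,2))$ with $\psi\ge0$ and $\psi\not\equiv0$. Let $H$ solve
\[
 H_\sigma+\mathcal LH+\vartheta H_y+A^+H=0,\qquad H(\sigma,0)=0,\qquad H(\sigma_1)=\varepsilon\psi.
\]
Since $W>0$ and is continuous, $P(\sigma_1,\cdot)$ is positive on $[1,2]$, so we can choose $\varepsilon>0$ with $\varepsilon\psi\le P(\sigma_1,\cdot)$. The comparison step uses $P_\sigma+\mathcal LP+\vartheta P_y+A^+P=(A^+-A)P\ge0$ together with $P(\sigma,0)>0$, and should give $0\le H\le P$ for $\sigma\ge\sigma_1$. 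Two technical points must be checked here. First, $A^+$ is only bounded, not continuous; we can replace it by a smooth majorant with the same integrable bound. Second, the comparison is on an unbounded half-line, and this needs $W$ bounded on finite time strips, which is assumed.

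Next, take $\sigma_1$ large, depending on $W$ and the coefficient bounds, exactly as in the positivity part of Lemma~\ref{lem:leading-edge}. The drift contribution is absorbed via \eqref{front:drift-absorption} and the potential via \eqref{front:kernel-bounded}, giving $\sup_{\sigma\ge\sigma_1}\|H(\sigma)\|_{1/6}\le C\varepsilon\|\psi\|_{1/6}$. The moment identity \eqref{front:moment-identity} with zero boundary data then shows that the first moment of $H$ stays above $\tfrac\varepsilon2\int y\psi$. Lemma~\ref{lem:linear-front}(ii), applied to $H$, gives an amplitude $\alpha_H>0$ and the profile estimate $\|H-\alpha_H\phi\|_{1/6}\le Cd$.

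Finally, we evaluate at the intermediate scale $y=t^{\gamma-1/2}-d$, as in the last step of Corollary~\ref{cor:signed-amplitudes}. Dividing by $t^{\gamma-1/2}$, the profile error is $O(t^{\rho-\gamma})$ and $\phi(y)/y\to1$, so
\[
 \liminf_{t\to\infty}\frac{e^{t^\gamma}W(t,m(t)+t^\gamma)}{t^\gamma}\ \ge\ \lim_{t\to\infty}\frac{H(\log t,y)}{t^{\gamma-1/2}}=\alpha_H>0.
\]
The relation $P\leftrightarrow W$ at $x=m(t)+t^\gamma$ reproduces exactly this normalization. The main obstacle I expect is the comparison step: $A$ is unbounded below, and $W$ carries no Gaussian weighted bound. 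For that reason the argument must use only the upper Duhamel estimates for $H$, never for $P$, and it relies on the maximum principle for $P-H$ using boundedness of $W$ on finite strips.
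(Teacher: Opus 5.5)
Your proof is correct, but it takes a different route from the paper's proof of this lemma. In effect, you transplant the positivity mechanism from the proof of Lemma~\ref{lem:leading-edge}:
\begin{itemize}
\item you truncate the potential to $A^+$, which is genuinely needed here because $A=t(1-c)$ can be of order $-t$ and so falls outside the hypotheses of Lemma~\ref{lem:linear-front}(ii);
\item you seed a linear minorant $H$ at a late time $\sigma_1$;
\item you keep its first moment positive through the Duhamel absorption and the identity \eqref{front:moment-identity};
\item you read off $\alpha_H>0$ from \eqref{front:linear-profile}.
\end{itemize}

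The paper does not use the weighted-space machinery at all for this lemma. It builds an explicit subsolution in the frame shifted by $t^\kappa$, namely $a(t)e^{-b(t)y/2}y\,r(t)^{-3/2}e^{-y^2/(4r(t))}$. The heat clock is slightly slowed, $r'=1-t^{\kappa-1/2}$, so that the residual becomes a negative square that absorbs the $b'(t)y$ term. The normalization $a(t)$ has a logarithmic derivative that collects the integrable losses $\omega+b^2/4+\dots$. The initial ordering comes from a seed evolved under the Dirichlet heat equation with potential $-\|c\|_\infty$, and comparison is carried out on the moving half-line in the original variables.

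The paper's route is self-contained: it uses only Gaussian algebra and the maximum principle, with no semigroup estimates in $X_\theta$. Yours is shorter given Section~\ref{front:section}, and it also yields a full profile lower bound $P\ge\alpha_H\phi-Cd\,e^{-y^2/6}$ rather than just the value at the intermediate scale.

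One point in your version should be stated explicitly rather than only flagged: how the comparison $H\le P$ is justified. Pull $H$ back to the original variables. The resulting function is bounded on finite strips, by the $X_{1/6}$ bound together with $e^{-\sqrt t\,y}\le1$. It is nonnegative, and it is a subsolution of $\partial_t-\partial_{xx}-c$, since its coefficient satisfies $1-A^+/t\le c$. Bounded-class comparison on $x>\ell(t)$ against $W$ then gives $H\le P$. You correctly identified that only upper Duhamel bounds on $H$, never on $P$, may enter.
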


\begin{proof}
Fix $0<\gamma<1/3$, choose $0<\kappa<\gamma$, and set
\begin{equation}
 B(t)=t^\kappa,\qquad
 P(t,z)=t^{-3/2}e^zW(t,m(t)+z),\qquad
 \widetilde P(t,y)=P(t,B(t)+y).
 \label{order:W-transform}
\end{equation}
A direct calculation gives
\begin{equation}
 \widetilde P_t=\widetilde P_{yy}+b(t)\widetilde P_y
 +(c(t,m(t)+B(t)+y)-1)\widetilde P,
 \qquad b(t)=B'(t)-\frac3{2t}.
 \label{order:W-transformed-pde}
\end{equation}
For all sufficiently large $t$ and $y>0$, the potential is bounded below by
$-\omega(t)$, where
\begin{equation}
 \omega(t)=Ct e^{-t^\kappa}.
 \label{order:omega}
\end{equation}
This function is integrable at infinity.

Choose $T>1$ sufficiently large that $m(t)+B(t)>0$, the potential bound
\eqref{order:potential-tail} holds, $b(t)\ge0$, and
$0<t^{\kappa-1/2}\le1/2$ for every $t\ge T$. Fix this $T$ before choosing
the initial seed and the normalization below.
We construct a subsolution from the Dirichlet heat profile
$yr^{-3/2}e^{-y^2/(4r)}$. The exponential factor $e^{-b(t)y/2}$ removes
the first spatial derivative from the residual. Slowing the heat time
slightly below unit speed creates a negative quadratic term that absorbs
the term involving $b'(t)y$. Accordingly, define
\begin{equation}
 \varepsilon(t)=t^{\kappa-1/2},\qquad
 r(T)=1,\qquad r'(t)=1-\varepsilon(t).
 \label{order:time-change}
\end{equation}
Then $r>0$ and $r(t)/t\to1$. Define
\begin{equation}
 Q(t,y)=a(t)e^{-b(t)y/2}
 y\,r(t)^{-3/2}\exp\left(-\frac{y^2}{4r(t)}\right),
 \label{order:subsolution}
\end{equation}
The normalization $a(T)>0$ will be chosen from an initial Gaussian lower bound for $W$. For now, prescribe
\begin{equation}
 \frac{a'(t)}{a(t)}=-\left[
 \omega(t)+\frac{b(t)^2}{4}
 +\frac{3\varepsilon(t)}{2r(t)}
 +\frac{r(t)^2b'(t)^2}{4\varepsilon(t)}\right].
 \label{order:a-choice}
\end{equation}
Substitution gives the exact identity
\begin{equation}
 \frac{Q_t-Q_{yy}-bQ_y+\omega Q}{Q}
 =-\left(\frac{\sqrt\varepsilon\,y}{2r}
 +\frac{rb'}{2\sqrt\varepsilon}\right)^2\leq0.
 \label{order:subsolution-residual}
\end{equation}
Since
\begin{equation}
 b(t)^2=O(t^{2\kappa-2}),\qquad
 \frac{\varepsilon(t)}{r(t)}
 +\frac{r(t)^2b'(t)^2}{\varepsilon(t)}
 =O(t^{\kappa-3/2}),
 \label{order:integrable-losses}
\end{equation}
and $2\kappa-2<-1$, $\kappa-3/2<-1$, all terms in~\eqref{order:a-choice}
are integrable on $[T,\infty)$. Thus, for any $a(T)>0$, the limit
$a_\infty$ equals $a(T)$ times the exponential of minus the finite integral
of the nonnegative loss terms, and is strictly positive.

To establish the initial comparison, fix $x_0=m(T)+B(T)>0$.
Positivity and continuity of $W(T-1,\cdot)$ allow us to choose a nonzero,
nonnegative smooth seed supported in $(x_0+1,x_0+2)$ and lying below
$W(T-1,\cdot)$.
Evolve this seed on the fixed half-line $x>x_0$ under the heat equation
with potential $-\|c\|_\infty$ and zero boundary data.
The comparison principle and the explicit Dirichlet kernel give
\begin{equation}
 W(T,x_0+y)\geq C_Ty e^{-y^2/4}\qquad(y>0).
 \label{order:initial-lower}
\end{equation}
Indeed, for $1\leq s\leq2$,
\begin{equation}
 G_1(y-s)-G_1(y+s)
 =\frac{2}{\sqrt{4\pi}}e^{-(y^2+s^2)/4}\sinh\frac{ys}{2}
 \geq c\,y e^{-y^2/4}.
 \label{order:dirichlet-seed-kernel}
\end{equation}
By~\eqref{order:initial-lower},
$\widetilde P(T,y)\geq C'_Ty e^{-y^2/4+y}$.
Since $r(T)=1$ and $b(T)\ge0$, choosing $0<a(T)\le C'_T$ ensures
$Q(T,y)\le\widetilde P(T,y)$ for all $y>0$. This fixes $a$ and its positive
limit $a_\infty$.

To justify comparison on the unbounded moving domain, return to the original variables and set
\[
 \underline W(t,x)=t^{3/2}e^{-(x-m(t))}
 Q\bigl(t,x-m(t)-B(t)\bigr),\qquad x>m(t)+B(t).
\]
By~\eqref{order:potential-tail} and~\eqref{order:subsolution-residual},
$\underline W_t-\underline W_{xx}-c\underline W\le0$.
The initial inequality holds by the choice of $a(T)$, and
$\underline W(t,m(t)+B(t))=0<W(t,m(t)+B(t))$.
The moving boundary is smooth and stays in $x>0$. Moreover, $b\ge0$ and
the Gaussian factor in $Q$ ensure that $\underline W$ is bounded on every
strip $T\le t\le T_1<\infty$, as is $W$ by assumption.
The comparison principle for bounded solutions on this moving half-line
therefore gives $\underline W\le W$, equivalently $Q\le\widetilde P$.
No boundary condition at spatial infinity is required in this bounded class.

Finally, set $y=t^\gamma-t^\kappa$. Since $\kappa<\gamma<1/3$,
\begin{equation}
 \frac{y}{t^\gamma}\to1,\qquad b(t)y\to0,\qquad
 \frac{y^2}{r(t)}\to0,\qquad \frac{r(t)}t\to1.
 \label{order:matching-limits}
\end{equation}
Equations~\eqref{order:subsolution} and~\eqref{order:W-transform} yield
\[
 \liminf_{t\to\infty}
 \frac{e^{t^\gamma}W(t,m(t)+t^\gamma)}{t^\gamma}
 \geq a_\infty>0.
\]
\end{proof}

\begin{proof}[Proof of Theorem~\ref{thm:monotonicity}]
Fix $0\le\beta_1<\beta_2<\infty$ and set $W=q_{\beta_1}-q_{\beta_2}$.
Proposition~\ref{prop:pointwise-order} gives $0<W\le1$ on $t>0,x>0$.
Positive-time interior regularity makes $W$ a classical solution of
\begin{equation}
 W_t=W_{xx}+c(t,x)W,
 \qquad
 c(t,x)=\int_0^1f'\bigl(q_{\beta_2}(t,x)+\theta W(t,x)\bigr)\,d\theta.
 \label{order:difference-equation}
\end{equation}
The arguments of $f'$ lie in $[0,1]$, so $|c|\le\|f'\|_{L^\infty([0,1])}$.
Moreover, $f'(0)=1$ and $f\in C^2([0,1])$ imply
\[
 |f'(s)-1|\le\|f''\|_{L^\infty([0,1])}s,\qquad 0\le s\le1.
\]
Together with~\eqref{eq:uniform-leading-bound}, this gives, for all sufficiently
large $t$ and all $z\ge0$,
\begin{equation}
 q_{\beta_i}(t,m(t)+z)\leq Ct e^{-z},\qquad
 c(t,m(t)+z)\geq1-Ct e^{-z}.
 \label{order:difference-potential}
\end{equation}
All hypotheses of Lemma~\ref{lem:positive-tail} are therefore satisfied.
Here concavity enters only through the pointwise comparison giving $W>0$.
Fix any $0<\gamma<1/3$. Combining the positive lower bound with the amplitude
limits in Lemma~\ref{lem:leading-edge}, we obtain
\begin{equation}
 \alpha_{\beta_1}-\alpha_{\beta_2}
 =\lim_{t\to\infty}
 \frac{e^{t^\gamma}
 \bigl(q_{\beta_1}-q_{\beta_2}\bigr)(t,m(t)+t^\gamma)}{t^\gamma}
 >0.
 \label{order:strict-alpha}
\end{equation}
The phase formula $\xi_\beta=\log(\alpha_\beta/A_*)$ now gives
\begin{equation}\label{order:strict-phase}
 \xi_{\beta_1}-\xi_{\beta_2}
 =\log\frac{\alpha_{\beta_1}}{\alpha_{\beta_2}}>0.
\end{equation}
Thus the phase is strictly decreasing with resistance, and
\eqref{order:tau-alpha} yields
\begin{equation}
 \tau_{\beta_2}-\tau_{\beta_1}
 =\frac12\log\frac{\alpha_{\beta_1}}{\alpha_{\beta_2}}>0
 \qquad(0\leq\beta_1<\beta_2<\infty).
 \label{order:strict-tau}
\end{equation}
Taking $\beta_1=0$ and using $\tau_0=0$ gives $\tau_\beta>0$ for every $\beta>0$.
\end{proof}

These results determine the strict direction of the barrier's effect on the
phase and delay. The next section quantifies its first-order behavior near
zero resistance.

\section{The weak-barrier limit and the first variation}\label{sec:small-beta}

This section proves Theorem~\ref{thm:small-beta} and characterizes its coefficient through a linear evolution problem. We write $U=u^0$, while $U_*$ continues to denote the normalized minimal-speed traveling wave. Assumptions~\ref{hyp:f}--\ref{hyp:data} suffice for the finite-time first variation. We add Assumption~\ref{hyp:concave} in the subsequent analysis to obtain the uniform estimates needed to pass to the far-field amplitude.

\subsection{The first variation on finite time intervals}

\begin{proposition}\label{small:finite-variation}
Under Assumptions~\ref{hyp:f}--\ref{hyp:data}, there exists a unique first variation $w$, bounded on $[0,T]\times(\mathbb R\setminus\{0\})$ for every $T>0$, such that
\begin{equation}\label{small:finite-expansion}
 \sup_{0\le t\le T}\|u^\beta(t,\cdot)-U(t,\cdot)-\beta w(t,\cdot)\|_{L^\infty(\mathbb R\setminus\{0\})}=o(\beta)
 \qquad(\beta\downarrow0).
\end{equation}
The function $w$ is classical in the interior of each half-line for positive times, has one-sided traces of $w$ and $w_x$ at the interface, and satisfies
\begin{equation}\label{small:variation-problem}
 \begin{cases}
 w_t=w_{xx}+f'(U)w,&x\ne0,\ t>0,\\
 [w](t)=U_x(t,0),&t>0,\\
 [w_x](t)=0,&t>0,\\
 w(0,x)=0,&x\ne0.
 \end{cases}
\end{equation}
The initial condition is attained in $L^\infty(\mathbb R\setminus\{0\})$ as $t\downarrow0$. This problem uniquely characterizes $w$ in the stated bounded solution class.
\end{proposition}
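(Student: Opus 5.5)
We will work throughout in the reflected variables \eqref{eq:fold}, where the interface law becomes a pair of scalar half-line boundary conditions. Writing $S_\beta,D_\beta$ for the sum and difference parts and $S_0,D_0$ for those of $U$, the candidate first variation in these variables is
\[
 \sigma=\partial_\beta S_\beta|_{0^+},\qquad \delta=\partial_\beta D_\beta|_{0^+}.
\]
Linearizing \eqref{eq:fold-pde} gives the system
\[
 \sigma_t=\sigma_{xx}+F_S\sigma+F_D\delta,\qquad
 \delta_t=\delta_{xx}+H_S\sigma+H_D\delta,
\]
with coefficients evaluated at $(S_0,D_0)$ and zero initial data. The boundary conditions are $\sigma_x(t,0)=0$ and
\[
 \delta(t,0)=\tfrac12(D_0)_x(t,0)=-\tfrac12 U_x(t,0).
\]
The second one is obtained by differentiating $D_\beta(t,0)=\tfrac\beta2(D_\beta)_x(t,0)$ at $\beta=0$. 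Unfolding, $w(t,\pm x)$ corresponds to $\sigma\mp\delta$. Then $[w]=-2\delta(t,0)=U_x(t,0)$, and $[w_x]=0$ follows from $\sigma_x(t,0)=0$ together with the symmetry of the folding. This recovers \eqref{small:variation-problem}.

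\emph{Existence and uniqueness.} We first solve the linear problem in mild form. The $\sigma$-equation uses the Neumann semigroup $\mathsf N$. The $\delta$-equation uses the Dirichlet semigroup plus a Dirichlet boundary-lifting term
\[
 \int_0^t \partial_y K_D(t-s,x,0)\,\bigl(-\tfrac12U_x(s,0)\bigr)\,ds.
\]
This is a fixed point in $L^\infty((0,T)\times(0,\infty))^2$, and its solvability requires the boundary datum $U_x(s,0)$ to be bounded on $[0,T]$ with $U_x(s,0)\to0$ as $s\downarrow0$. Both properties follow from the kernel estimate in the proof of Lemma~\ref{lem:reflection-comparison}: $u_0$ vanishes on $[b,0]$, so $|U_x(s,0)|\le Cs^{-1/2}e^{-c/s}+C\sqrt s$. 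The bounded Dirichlet data then give a bounded response that is $o(1)$ in $L^\infty$ as $t\downarrow0$. Uniqueness in the bounded class comes from Gronwall applied to the difference of two mild solutions, after checking that bounded classical solutions coincide with mild ones via the Duhamel representation on $[\varepsilon,T]$. Interior classical regularity and one-sided traces follow by parabolic regularity, as in Proposition~\ref{prop:well-posedness}.

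\emph{The expansion \eqref{small:finite-expansion}: Robin-to-Dirichlet kernel expansion.} We compare Duhamel formulas. The $S$-equation has the same Neumann semigroup for all $\beta$, so only $D_\beta$ involves the parameter, through $\mathsf R_\beta$. The key linear input is a first-order expansion of the Robin semigroup at $k=2/\beta\to\infty$ in the form
\[
 \mathsf R_\beta(t)g=\mathsf R_0(t)g+\beta\,\mathcal B[\tfrac12\partial_x(\mathsf R_0 g)(\cdot,0)](t)+o(\beta).
\]
Here $\mathcal B$ is the Dirichlet boundary-lifting operator. We would prove this from \eqref{order:robin-kernel}: expand $2k\int_0^\infty e^{-ks}G_t(x+y+s)\,ds$ as
\[
 2G_t(x+y)+\tfrac2k\,\partial G_t(x+y)+\dots
\]
via integration by parts. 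It is more robust, however, to use a direct function-level argument. We set $e_\beta=(D_\beta-D_0-\beta\delta)/\beta$ and write its equation. It has Lipschitz terms acting on $e_\beta$ and on $(S_\beta-S_0-\beta\sigma)/\beta$, plus quadratic remainders $O(\|D_\beta-D_0\|^2+\|S_\beta-S_0\|^2)/\beta$. Its Robin-type boundary datum is
\[
 e_\beta(t,0)=\tfrac12\bigl[(D_\beta)_x(t,0)-(D_0)_x(t,0)\bigr].
\]
For the quadratic remainders, we first need the Lipschitz bound $\|u^\beta-U\|=O(\beta)$ on $[0,T]$. This comes from the same Gronwall scheme used for \eqref{order:finite-time-cont}, with the kernel bound sharpened to
\[
 \|\mathsf R_\beta(t)-\mathsf R_0(t)\|\le C\beta/\sqrt t,
\]
which follows from \eqref{order:dirichlet-limit}.

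\emph{The main obstacle.} This is the boundary term: we need $(D_\beta)_x(t,0)\to(D_0)_x(t,0)$ uniformly on $[0,T]$, including near $t=0$. We would get it by representing $(D_\beta)_x(t,0)$ through the differentiated kernel. Near $t=0$ we use the data-free neighborhood $[b,0]$, which makes all interface derivatives $O(\sqrt t)$ uniformly in $\beta$. For $t\ge\varepsilon$ we use uniform-in-$\beta$ local boundary regularity of the Robin problem, since $u^\beta$ stays in $[0,1]$ and $k\beta$ is fixed. Combined with the finite-time continuity \eqref{order:finite-time-cont} and interpolation, this gives the required convergence of the gradient traces. Gronwall then gives $\|e_\beta\|+\|(S_\beta-S_0-\beta\sigma)/\beta\|\to0$ uniformly on $[0,T]$, which is \eqref{small:finite-expansion} after unfolding.
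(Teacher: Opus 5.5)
Your folded linearization is correct. The conditions $\sigma_x(t,0)=0$ and $\delta(t,0)=-\tfrac12U_x(t,0)$, together with the unfolding $w(t,\pm x)=\sigma\mp\delta$, reproduce \eqref{small:variation-problem}. They also agree with the system \eqref{small:cooperative-response} that the paper writes later for $R=-\sigma$, $H=\delta$. The linear well-posedness is fine too.

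Your reduction of \eqref{small:finite-expansion} is sound. Since $e_\beta$ has zero initial data and Dirichlet datum $\tfrac12[(D_\beta)_x(t,0)-(D_0)_x(t,0)]$, everything hinges on $u^\beta_x(t,0)\to U_x(t,0)$ uniformly on $[0,T]$. This convergence is also necessary, because $e_\beta$ must vanish uniformly up to $x=0$.

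\textbf{Gap 1: flux convergence for $t\ge\varepsilon$.} You invoke ``uniform-in-$\beta$ local boundary regularity of the Robin problem,'' but this does not follow from $0\le u^\beta\le1$ and $k\beta=2$. Boundary Schauder or oblique-derivative estimates depend on the Robin coefficient $k=2/\beta$. After normalizing the condition as $k^{-1}v_x-v=0$, they depend instead on the obliqueness constant $k^{-1}$. Either way they degenerate as $\beta\downarrow0$. Robin-to-Dirichlet is a singular limit, and uniform $C^{1,\alpha}$ control up to the boundary is exactly the point that needs proof.

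\textbf{Gap 2: the $O(\beta)$ bound for the quadratic remainders.} This does not follow from $\|\mathsf R_\beta(t)-\mathsf R_0(t)\|\le C\beta/\sqrt t$ alone. Applied to the initial term, that bound gives only $C\beta t^{-1/2}$. The remainder is then of size $\beta/t$, which is not integrable at $t=0$. You must use that $u_0$ vanishes near the interface here as well, not only in the gradient step.

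\textbf{Repair via the explicit kernel.} From \eqref{order:robin-kernel}, since $ke^{-ks}\,ds$ is a probability measure,
\[
 \sup_{x\ge0}\int_0^\infty|\partial_x^jK_k(t,x,y)|\,dy\le4\|\partial^jG_t\|_{L^1(\R)}\le C_jt^{-j/2},\qquad j=1,2,
\]
uniformly in $k$. Restarting Duhamel at $\varepsilon/2$ then gives a $\beta$-independent modulus of continuity $Ch(1+|\log h|)$ for $(D_\beta)_x(t,\cdot)$ on $[\varepsilon,T]$. Interpolation with \eqref{order:finite-time-cont} yields the trace convergence.

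\textbf{Comparison with the paper.} Once you do this, you are back on the kernel route you set aside as less robust, and the paper shows it is the more economical one. The paper differentiates the full-line kernel at $\beta=0$, obtaining $\mathcal B_t(x,y)=-\tfrac12\sgn(x)\sgn(y)G_t'(|x|+|y|)$. With $\mathcal D_\beta=(P_\beta-G_t)/\beta$, it proves $\|\mathcal D_\beta(t)\|,\|\mathcal B_t\|\le(4\pi t)^{-1/2}$ and $\|\mathcal D_\beta(t)-\mathcal B_t\|\le C\beta/t$. It adds the bound $Ct^{-1/2}e^{-d^2/(4t)}$ on the initial term. It then gets $W_\beta\to w$ by Gronwall, with $w$ defined by a linear Volterra equation. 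No convergence of gradient traces is ever needed; the interface conditions are read off afterwards from $[\mathcal B_r\phi]=\partial_x(G_r*\phi)(0)$ and $[\partial_x\mathcal B_r\phi]=0$. Your formulation has the merit of making the Neumann/Dirichlet structure with datum $-\tfrac12U_x(t,0)$ explicit from the start. However, it concentrates the whole difficulty in uniform control of $u^\beta_x(t,0)$, which the kernel-quotient formulation avoids.
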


\begin{proof}
Use the transmission kernel~\eqref{eq:barrier-kernel}, with $P_0(t)=G_t$.
We use the same notation for these kernels and their integral operators.
Differentiating its integral representation from the right at $\beta=0$ gives
\begin{equation}\label{small:kernel-variation}
 \mathcal B_t(x,y)=-\frac{\operatorname{sgn}(x)\operatorname{sgn}(y)}2G'_t(|x|+|y|).
\end{equation}
At the interface, signs are interpreted through one-sided limits. Write
$\mathcal D_\beta(t)=(P_\beta(t)-G_t)/\beta$ for $\beta>0$.
The mean-value formula and integration of $|G'_t|$ give
\begin{equation}\label{small:kernel-quotient}
 \|\mathcal D_\beta(t)\|_{L^\infty\to L^\infty},\quad
 \|\mathcal B_t\|_{L^\infty\to L^\infty}
 \le\frac1{\sqrt{4\pi t}}.
\end{equation}
Subtracting $G'_t(|x|+|y|)$ from its translated value in the mean-value formula also gives
\[
 \|\mathcal D_\beta(t)-\mathcal B_t\|_{L^\infty\to L^\infty}
 \le C\beta\|G''_t\|_{L^1(\mathbb R)}\le C\beta/t.
\]
In particular, convergence holds in operator norm uniformly on $[\delta,T]$ for every $0<\delta<T$.

The initial term admits a stronger bound near $t=0$. With $d=-b>0$ from
Assumption~\ref{hyp:data}, $u_0$ is supported in $(-\infty,-d]$.
Integrating $|G'_t|$ over this region yields
\begin{equation}\label{small:initial-kernel-bound}
 \|\mathcal D_\beta(t)u_0\|_\infty+\|\mathcal B_tu_0\|_\infty
 \le Ct^{-1/2}e^{-d^2/(4t)},\qquad \beta>0.
\end{equation}
Splitting $[0,T]$ at a small positive time and then using operator-norm
convergence away from zero shows that
\[
 \sup_{0<t\le T}\|(\mathcal D_\beta(t)-\mathcal B_t)u_0\|_\infty\longrightarrow0.
\]
For the reaction term, split the time integral according to whether
$t-s<\delta$ or $t-s\ge\delta$. The first part is bounded by $C\sqrt\delta$
uniformly for $t\le T$, by~\eqref{small:kernel-quotient}; the second tends to
zero by operator-norm convergence. Hence
\begin{equation}\label{small:forcing-convergence}
 \sup_{0\le t\le T}\left\|
 \int_0^t(\mathcal D_\beta(t-s)-\mathcal B_{t-s})f(U(s))\,ds
 \right\|_\infty\longrightarrow0.
\end{equation}

Fix $\beta_*>0$ and put $W_\beta=(u^\beta-U)/\beta$, with $W_\beta(0)=0$.
Subtracting the two nonlinear Duhamel formulas gives
\[
 W_\beta(t)=\mathcal D_\beta(t)u_0
 +\int_0^t\mathcal D_\beta(t-s)f(U(s))\,ds
 +\int_0^t P_\beta(t-s)\frac{f(u^\beta(s))-f(U(s))}{\beta}\,ds.
\]
The contraction bound for $P_\beta$, the Lipschitz continuity of $f$, and
\eqref{small:initial-kernel-bound} imply, by Gronwall's inequality,
\begin{equation}\label{small:finite-quotient-bound}
 \sup_{0<\beta\le\beta_*}\sup_{0\le t\le T}
 \|W_\beta(t)\|_\infty\le C_T.
\end{equation}
Here and below the constants may depend on $f,u_0,T$, but not on
$\beta\in(0,\beta_*]$.
Define $w$ as the unique bounded solution, on each finite time interval, of
\begin{equation}\label{small:variation-duhamel}
 \begin{split}
 w(t)={}&\mathcal B_tu_0+\int_0^t\mathcal B_{t-s}f(U(s))\,ds\\
 &+\int_0^tG_{t-s}\bigl[f'(U(s))w(s)\bigr]\,ds.
 \end{split}
\end{equation}
Existence and uniqueness follow by successive approximation for this linear
Volterra equation; uniqueness makes these solutions consistent as $T$ varies.
The first two terms tend to zero in $L^\infty$ as $t\downarrow0$, by
\eqref{small:initial-kernel-bound} and~\eqref{small:kernel-quotient}, so
$\|w(t)\|_\infty\to0$ as well.

Taylor's formula gives
\[
 \left\|\frac{f(u^\beta)-f(U)}\beta-f'(U)W_\beta\right\|_\infty
 \le\frac12\|f''\|_\infty\beta\|W_\beta\|_\infty^2.
\]
When the integral equations for $W_\beta$ and $w$ are subtracted, the
initial and reaction forcing errors tend to zero uniformly on $[0,T]$
by the estimates above. The additional semigroup error is bounded by
\[
 \left\|\int_0^t(P_\beta(t-s)-G_{t-s})[f'(U(s))w(s)]\,ds\right\|_\infty
 \le C_T\beta\sqrt T.
\]
The Taylor remainder contributes at most $C_T\beta T$, and the remaining
term is bounded by $\|f'\|_\infty\int_0^t\|W_\beta(s)-w(s)\|_\infty\,ds$.
Another application of Gronwall's inequality proves uniform convergence
$W_\beta\to w$ on $[0,T]$, establishing~\eqref{small:finite-expansion}.

We next record a spatial bound needed for the later amplitude analysis.
The same kernel representation gives
\[
 \int_{\mathbb R}|\mathcal D_\beta(t,x,y)|\,dy\le G_t(x),
 \qquad 0\le P_\beta(t,x,y)\le2G_t(x-y).
\]
With $M=\|f'\|_\infty$, the difference-quotient equation therefore yields
\[
 |W_\beta(t,x)|\le CG_t(x)+C\int_0^tG_r(x)\,dr
 +2M\int_0^t(G_{t-s}*|W_\beta(s)|)(x)\,ds.
\]
Iterating this inequality and using the Gaussian convolution identity gives
\[
 |W_\beta(T,x)|\le Ce^{2MT}G_T(x)
 +C\int_0^T e^{2Mr}G_r(x)\,dr.
\]
Since $e^{-x^2/(4r)}\le e^{-x^2/(4T)}$ for $r\le T$ and
$\int_0^T r^{-1/2}\,dr=2\sqrt T$, we conclude that
\begin{equation}\label{small:finite-gaussian}
 |W_\beta(T,x)|\le C_Te^{-x^2/(4T)},
 \qquad 0<\beta\le\beta_*,\quad x\ne0.
\end{equation}
The same bound holds for $w(T,x)$ by passage to the limit.
For any fixed, sufficiently large starting time $T_0$ and fixed cutoff
exponent $\rho$, the negative-cutoff normalization $Z_{W_\beta}$ in
Corollary~\ref{cor:signed-amplitudes} consequently satisfies
$\sup_{0<\beta\le\beta_*}\|Z_{W_\beta}(\log T_0,\cdot)\|_{1/5}<\infty$.
Indeed, after that change of variables the bound is a Gaussian with quadratic
exponent $-y^2/4$ and a linear term, which is absorbed by the weaker weight
$e^{y^2/5}$. This is an initial weighted bound at the fixed time $T_0$.

It remains to identify the equation and interface traces of $w$.
For a bounded Lipschitz function $\phi$, direct integration of
\eqref{small:kernel-variation} gives, for $r>0$,
\begin{equation}\label{small:kernel-traces}
 [\mathcal B_r\phi]=\partial_x(G_r*\phi)(0),\qquad
 [\partial_x\mathcal B_r\phi]=0.
\end{equation}
These trace identities also hold for bounded measurable $\phi$ when $r>0$
is fixed. To justify taking traces under the time integral in
\eqref{small:variation-duhamel}, fix $t>0$ and split it at $s=t/2$.
For $s\le t/2$, all kernel derivatives involved are integrable with bounds
away from $r=t-s=0$. For $s\ge t/2$, $f(U(s,\cdot))$ has a common spatial
Lipschitz bound. In fact, writing $a=|x|$ gives on either side of the interface
\[
 \partial_x\mathcal B_r\phi(x)
 =-\frac12\int_0^\infty G''_r(a+y)[\phi(y)-\phi(-y)]\,dy,
\]
and thus
\[
 \sup_{x\ne0}|\partial_x\mathcal B_r\phi(x)|
 \le C\operatorname{Lip}(\phi)r^{-1/2}.
\]
This integrable bound permits passage to both derivative traces; the traces
of $\mathcal B_r\phi$ itself are controlled by~\eqref{small:kernel-quotient}.
The last, ordinary heat-potential term in~\eqref{small:variation-duhamel}
has continuous value and first spatial derivative across zero, since its
source is bounded and $\|G'_r\|_1\le Cr^{-1/2}$.
Taking jumps in~\eqref{small:variation-duhamel} and using the Duhamel formula
for $U$ therefore gives
\[
 [w](t)=\partial_x(G_t*u_0)(0)
 +\int_0^t\partial_x(G_{t-s}*f(U(s)))(0)\,ds=U_x(t,0),
 \qquad [w_x](t)=0.
\]
Away from the interface, $\mathcal B_r$ solves the heat equation and its
initial action vanishes locally as $r\downarrow0$. The integral equation
and interior parabolic regularity then give $w_t=w_{xx}+f'(U)w$ on each
half-line for positive times. Equivalently, in distributions,
\begin{equation}\label{small:distributional-variation}
 w_t=w_{xx}+f'(U)w-U_x(t,0)\delta'_0.
\end{equation}
Indeed, the distributional second derivative contributes
$[w_x]\delta_0+[w]\delta'_0$ in addition to the piecewise second derivative.
The difference of two solutions in the stated class has zero jumps, solves
the homogeneous full-line equation with bounded potential $f'(U)$, and has
zero initial data. Full-line uniqueness therefore proves the asserted
uniqueness for~\eqref{small:variation-problem}.
\end{proof}

Let $\mathcal K(t,x;s,y)$ be the full-line fundamental solution of
$\partial_t-\partial_{xx}-f'(U(t,x))$, for $0<s<t$.
The distributional source in~\eqref{small:distributional-variation} gives
\begin{equation}\label{small:variation-green}
 w(t,x)=\int_0^tU_x(s,0)\partial_y\mathcal K(t,x;s,0)\,ds,\qquad x\ne0.
\end{equation}
The sign follows from the action of $-U_x(s,0)\delta'_0$ on the source
variable $y$. The interface traces are obtained by taking $x\to0^\pm$
after time integration; the singularity at the upper endpoint produces
the jump in~\eqref{small:variation-problem}.

These finite-time estimates allow constants to grow with $T$, so they do
not by themselves justify interchanging $\beta\downarrow0$ and $t\to\infty$.
We next establish uniform decay of the interface flux to supply the
long-time control needed for the amplitude analysis.

\subsection{Flux decay uniformly in the parameter}

For the remainder of Section~\ref{sec:small-beta}, Assumptions~\ref{hyp:f}--\ref{hyp:concave} hold.
Fix any $\beta_*>0$ and consider $0<\beta\le\beta_*$.
Recall the reflected variables from~\eqref{eq:fold} and denote the common
interface derivative by
\[
 d_\beta(t)=u_x^\beta(t,0^\pm)=-(D_\beta)_x(t,0)=-j_\beta(t).
\]

Thus $d_\beta$ is the signed derivative, opposite to the positive incoming
flux $j_\beta$ defined in~\eqref{eq:reflection}.

\begin{lemma}\label{small:uniform-flux}
Under Assumptions~\ref{hyp:f}--\ref{hyp:concave}, $d_\beta\in C^1((0,\infty))$
for each $\beta>0$. For any fixed $\beta_*>0$, there exist $T,C,\omega>0$,
depending on $f,u_0,\beta_*$ but independent of $\beta\in(0,\beta_*]$, such that
\begin{equation}\label{small:flux-decay}
 |d_\beta(t)|+|d'_\beta(t)|\le Ce^{-\omega t},\qquad t\ge T.
\end{equation}
\end{lemma}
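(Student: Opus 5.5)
The plan is to work entirely on the reflected half-line and to control the Robin difference $D_\beta$ near the interface, uniformly in $\beta\in(0,\beta_*]$. Recall that $d_\beta(t)=-(D_\beta)_x(t,0)=-\frac2\beta D_\beta(t,0)$. Writing $q_\beta=S_\beta-D_\beta$ and $p_\beta=S_\beta+D_\beta$, it suffices to show that near $x=0$ both $p_\beta$ and $q_\beta$ approach $1$ exponentially fast, with a rate and constants uniform in $\beta$.

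The first step is uniform invasion. By Proposition~\ref{prop:pointwise-order}, concavity gives $q_{\beta}\ge q_{\beta_*}$ for $\beta\le\beta_*$. In addition, $p_\beta\ge q_\beta$ by Lemma~\ref{lem:reflection-comparison}. Applying Theorem~\ref{thm:phase} to $\beta_*$ gives $q_{\beta_*}(t,x)\to1$ uniformly for $0\le x\le t$, since the front sits at $m(t)\sim 2t$. Hence there is a common time $T_1$ after which $p_\beta,q_\beta\ge1-\delta$ on $[0,t]$ for all $\beta\le\beta_*$. Here $\delta$ is chosen so that $f'(s)\le -\mu<0$ on $[1-\delta,1]$, using $f'(1)<0$.

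The second step is exponential relaxation near $1$. Set $a=1-S_\beta\ge0$. It satisfies a Neumann problem $a_t=a_{xx}-\frac{f(S+D)+f(S-D)}2$. Concavity gives $f(s)\ge \mu'(1-s)$ near $1$, so the reaction term is at most $-\mu' a$. Comparison on the moving interval $[0,t/2]$ with boundary value bounded by $\delta$ at $x=t/2$ then yields $a\le Ce^{-\omega t}$ on $[0,1]$, uniformly in $\beta$. The barrier supersolution is $e^{-\mu' (t-T_1)}+\delta\cosh(\lambda x)/\cosh(\lambda t/2)$, and it works with a suitable small $\lambda$ after reducing $\omega$. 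For $D_\beta$ I use its linear equation $(D_\beta)_t=(D_\beta)_{xx}+c_\beta D_\beta$, whose coefficient $c_\beta$ is an average of $f'$ over $[q_\beta,p_\beta]\subset[1-\delta,1]$ and so satisfies $c_\beta\le-\mu$. Together with the Robin condition (which only helps, since $D_\beta\ge0$ and the boundary flux pushes mass out) and the bound $0\le D_\beta\le\frac12$, the same kind of barrier gives $0\le D_\beta\le Ce^{-\omega t}$ on $[0,1]$ uniformly. This already bounds $q_{\beta,x}$ in an integrated sense, but not yet the trace of $(D_\beta)_x$.

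The third step, which I expect to be the main obstacle, is converting these sup bounds into a bound on $(D_\beta)_x(t,0)$ and its time derivative that is uniform as $\beta\downarrow0$. For $\beta\to0$ the Robin coefficient $2/\beta$ blows up, so standard Robin boundary estimates degenerate. I would first try to avoid this by using that the problem interpolates between Robin and Dirichlet. On $[0,1]$, the Duhamel formula with the Robin kernel \eqref{order:robin-kernel} started at time $t-1$ gives $(D_\beta)_x(t,0)$ in terms of $\partial_xK_k$. Since $|\partial_xK_k(1,0,y)|$ is integrable uniformly in $k\ge0$ (it converges to the Dirichlet kernel derivative, cf.~\eqref{order:dirichlet-limit}), and the data $D_\beta(t-1,\cdot)$ and the source $c_\beta D_\beta$ are $O(e^{-\omega t})$ on $[0,1]$ (with Gaussian smallness from farther points handled by splitting off $y\ge1$), we obtain $|d_\beta(t)|\le Ce^{-\omega t}$. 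For $d'_\beta$, the plan is to differentiate in time: $D_t=(D_\beta)_t$ satisfies the same Robin problem with an extra source $(c_\beta)_tD_\beta$. Interior and boundary Schauder regularity with the boundary operator $\partial_x-k$ (the coefficient having the favorable sign) bounds $(c_\beta)_t$ independently of $k$. Applying the same Duhamel kernel argument to $D_t$ then gives $|d'_\beta|\le Ce^{-\omega t}$. That $d_\beta\in C^1$ follows from this positive-time regularity. If uniform-in-$k$ Schauder control proves delicate, the fallback is to estimate $D_t$ directly from the mild formula with time-differentiated kernels, using $\|\partial_t\partial_xK_k(r)\|\le Cr^{-3/2}$ uniformly in $k$ on a window of length one.
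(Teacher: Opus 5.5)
Your first two steps and your bound for $d_\beta$ itself are sound and are essentially the paper's argument. The paper screens $1-u^\beta$ on the full line with the even function $\Psi$ rather than treating $1-S_\beta$ and $D_\beta$ separately, but the two are equivalent. Since $K_k$ satisfies the Robin condition in its first variable, $\partial_xK_k(s,0,y)=kK_k(s,0,y)$, which is exactly the trace kernel the paper uses.

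Two corrections are needed in the $d_\beta$ part.
\begin{itemize}
\item Uniformity in $k$ comes from the explicit bound $\int_0^\infty kK_k(s,0,y)\,dy\le(\pi s)^{-1/2}$, valid for every $k$ and every $s\in(0,1]$. It does not come from convergence to the Dirichlet kernel, and the source term needs the bound for all such $s$, not only $s=1$.
\item Splitting at $y=1$ does not give exponential smallness, because the kernel mass on $y\ge1$ is of order one at $s=1$. You need $D_\beta$ small on $[0,R_t]$ with $R_t\to\infty$; your $\cosh$ barrier already gives this on $[0,t/4]$. The Gaussian tail $e^{-R_t^2/(4s)}$ then handles the remainder.
\end{itemize}

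There is a genuine gap in the bound for $d'_\beta$.

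Your primary route claims that Schauder theory for the boundary operator $\partial_x-k$ bounds $(c_\beta)_t$, hence $p_t,q_t$, up to $x=0$ independently of $k$. Standard oblique-derivative estimates have constants depending on the size of the Robin coefficient. They are therefore not uniform as $k=2/\beta\to\infty$, which is exactly the regime $\beta\downarrow0$ at stake, and you give no substitute.

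Even granting a uniform bound on $(c_\beta)_t$, applying ``the same Duhamel kernel argument'' to $D_t$ only gives $|d'_\beta|\le C$. Decay requires $D_t$ itself to be exponentially small on $[0,R_t]\times[t-1,t]$, and you never establish this. A local estimate $|D_t|\lesssim\sup|D|$ would supply it, but that is again a $k$-uniform boundary estimate.

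The fallback does not close as stated either, since $\int_0^1r^{-3/2}\,dr=\infty$. A time-differentiated trace kernel can only be used after subtracting the terminal source value and invoking a uniform temporal H\"older modulus of the source with exponent above $1/2$.

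The paper supplies the missing mechanism without any Schauder theory, in three steps.
\begin{enumerate}
\item The transmission semigroup satisfies $\|\partial_r\mathsf S_\beta(r)\|_{\infty\to\infty}\le C/r$ uniformly in $\beta$, because the averaging weight $e^{-s}\,ds$ has mass one. This gives a uniform modulus $Ch(1+|\log h|)$. Through the cancellation formula \eqref{small:time-cancellation} it then gives a global bound $\|u_t^\beta\|_\infty\le C$ for $t\ge2$.
\item The function $v_\beta=u_t^\beta$ solves the linearized transmission problem, and $f'(u^\beta)\le-a_*$ behind the front. Comparing $\pm v_\beta$ with $\Psi$ yields $|u_t^\beta|\le Ce^{-\omega t}$ on $|x|\le c_{\rm in}t$.
\item $\partial_tD_\beta$ solves the Robin problem with source $M_\beta$, and $|M_\beta|\le C(|p_t|+|q_t|)$. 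The same $kK_k$ trace estimate then gives decay of $k\,\partial_tD_\beta(t,0)=-d'_\beta$, with no bound on $(c_\beta)_t$ needed.
\end{enumerate}
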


\begin{proof}
The parameter order and the reflection comparison give
\[
 q_\beta\ge q_{\beta_*},\qquad p_\beta\ge q_\beta.
\]
Fix $0<c_{\rm in}<c_{\rm out}<2$. Spatial monotonicity gives
\[
 u^\beta(t,x)\ge q_{\beta_*}(t,|x|)
 \ge q_{\beta_*}(t,c_{\rm out}t),\qquad |x|\le c_{\rm out}t,
\]
with one-sided traces at zero. Since $c_{\rm out}<2$, phase convergence for
the fixed parameter $\beta_*$ makes the last expression tend to $1$.
Thus, for every sufficiently small $\delta>0$, there is a common starting
time beyond which
\[
 1-\delta\le u^\beta(t,x)\le1\qquad (|x|\le c_{\rm out}t).
\]
Choose $a_*>0$ so that
\[
 f(s)\ge a_*(1-s),\qquad f'(s)\le-a_*\qquad (1-\delta\le s\le1).
\]
Thus $1-u^\beta$ satisfies $(\partial_t-\partial_{xx}+a_*)(1-u^\beta)\le0$ in this interval. Choose $b_*>0$ such that $b_*^2+b_*c_{\rm out}<a_*$. The even function
\begin{equation}\label{small:saturation-screen}
 \Psi(t,x)=C e^{-a_*(t-T)}+C e^{-b_*c_{\rm out}t}\cosh(b_*x)
\end{equation}
satisfies
\[
 (\partial_t-\partial_{xx}+a_*)\Psi
 =C(a_*-b_*c_{\rm out}-b_*^2)e^{-b_*c_{\rm out}t}\cosh(b_*x)\ge0.
\]
It is continuous at zero with $\Psi_x(t,0)=0$, so it satisfies the interface
conditions for every $\beta$. Choose $T$ above the common starting time.
The first term controls the initial section $t=T$, while
$e^{-b_*c_{\rm out}t}\cosh(b_*c_{\rm out}t)\ge1/2$ at each lateral endpoint.
Thus a common choice of $C$ bounds $1-u^\beta$ on the parabolic boundary.
On $|x|\le c_{\rm in}t$, the spatial term is at most
$Ce^{-b_*(c_{\rm out}-c_{\rm in})t}$. The comparison principle gives
\begin{equation}\label{small:inner-saturation}
 1-u^\beta(t,x)\le Ce^{-\omega t},\qquad |x|\le c_{\rm in}t.
\end{equation}

Let $\mathsf S_\beta(r)$ denote the transmission heat operator, and write
$F_\beta(t)=f(u^\beta(t))$. Differentiating~\eqref{eq:barrier-kernel}
in the heat time, each Gaussian term is controlled after spatial integration
by $\|\partial_rG_r\|_{L^1(\mathbb R)}\le C/r$.
The translated Gaussian terms have the same bound, and their averaging
weight has total mass $\int_0^\infty e^{-s}\,ds=1$.
Consequently, for $r,h>0$,
\begin{equation}\label{small:heat-time-bounds}
 \|\partial_r\mathsf S_\beta(r)\|_{\infty\to\infty}\le \frac Cr,
 \qquad
 \|\mathsf S_\beta(r+h)-\mathsf S_\beta(r)\|_{\infty\to\infty}
 \le C\min\{1,h/r\},
\end{equation}
uniformly in $\beta$. For the second estimate, integrate the first when
$h\le r$, and otherwise use the contraction bound. Apply Duhamel's formula
on $[t-1,t+h]$, keeping the lower endpoint fixed. Boundedness of $u^\beta$
and $F_\beta$ gives, for $t\ge1$ and $0<h\le1$,
\begin{equation}\label{small:uniform-time-modulus}
 \begin{split}
 \|u^\beta(t+h)-u^\beta(t)\|_\infty
 &\le Ch+C\int_0^1\min\{1,h/r\}\,dr\\
 &\le Ch(1+|\log h|)\le C_\theta h^\theta,
 \qquad 0<\theta<1.
 \end{split}
\end{equation}
The Lipschitz continuity of $f$ gives the same temporal modulus for
$F_\beta$. To differentiate at time $t\ge2$, again treat $t-1$ as a fixed
starting time and subtract the terminal reaction value. The resulting
cancellation is
\begin{equation}\label{small:time-cancellation}
 \begin{split}
 u_t^\beta(t)={}&(\partial_r\mathsf S_\beta)(1)u^\beta(t-1)
              +\mathsf S_\beta(1)F_\beta(t)\\
 &+\int_0^1(\partial_r\mathsf S_\beta)(r)
       [F_\beta(t-r)-F_\beta(t)]\,dr.
 \end{split}
\end{equation}
The last integrand has norm at most $Cr^{\theta-1}$. Hence
\begin{equation}\label{small:uniform-time-derivative}
 \sup_{0<\beta\le\beta_*}\sup_{t\ge2}
       \|u_t^\beta(t)\|_\infty\le C.
\end{equation}
These estimates use the invariant bound $0\le u^\beta\le1$ and the boundedness
and Lipschitz continuity of $f$ on $[0,1]$. They require neither spatial
regularity of the initial datum nor concavity.

For each fixed $\beta$, the positive-time differences
$(u^\beta(t+h)-u^\beta(t))/h$ satisfy the same homogeneous interface law,
with reaction coefficient
$\int_0^1f'((1-\theta)u^\beta(t)+\theta u^\beta(t+h))\,d\theta$.
Passing to the limit at positive times yields the linear evolution for
$v_\beta=u_t^\beta$:
\[
 (v_\beta)_t=(v_\beta)_{xx}+f'(u^\beta)v_\beta,\qquad
 [v_\beta]=\beta(v_\beta)_x(t,0^\pm),\qquad [(v_\beta)_x]=0.
\]
The identities hold first in the mild sense on intervals separated from
zero; fixed-parameter parabolic regularity with $f\in C^2$ then gives the
one-sided traces. This regularity is used only to justify the equation and
boundary identities. Since $f'(u^\beta)\le-a_*$ on the outer moving interval,
$\Psi$ from~\eqref{small:saturation-screen} is also a supersolution for
$\partial_t-\partial_{xx}-f'(u^\beta)$ there. Compare both $v_\beta$ and
$-v_\beta$ with $\Psi$. The common global bound
in~\eqref{small:uniform-time-derivative} controls the initial data and
lateral endpoints. Thus, after increasing the common starting time,
\begin{equation}\label{small:inner-time-decay}
 |u_t^\beta(t,x)|\le Ce^{-\omega t},\qquad |x|\le c_{\rm in}t.
\end{equation}
The constants are obtained by comparison and do not depend on a
boundary regularity estimate as $\beta\downarrow0$.

We next control the Robin traces directly. Let $k=2/\beta$, let
$\mathsf T_k(s)$ be the half-line heat operator with condition
$v_x(0)=kv(0)$, and denote its kernel by $K_k$. The
formula~\eqref{order:robin-kernel} and integration by parts give
\begin{equation}\label{small:robin-trace-kernel}
 kK_k(s,0,y)=-2k\int_0^\infty e^{-kr}G'_s(y+r)\,dr\ge0,
 \qquad
 \int_0^\infty kK_k(s,0,y)\,dy\le\frac1{\sqrt{\pi s}}.
\end{equation}
More precisely, for every $R\ge0$,
\begin{equation}\label{small:robin-trace-tail}
 \int_R^\infty kK_k(s,0,y)\,dy
 =2k\int_0^\infty e^{-kr}G_s(R+r)\,dr
 \le\frac{e^{-R^2/(4s)}}{\sqrt{\pi s}}.
\end{equation}
Here $ke^{-kr}\,dr$ is a probability measure, so the bounds are
independent of $k$.
Set
\[
 N_\beta=\frac{f(p_\beta)-f(q_\beta)}2,\qquad
 M_\beta=\frac{f'(p_\beta)(p_\beta)_t-f'(q_\beta)(q_\beta)_t}2.
\]
Time derivatives are written explicitly as $\partial_t D_\beta$; the dot in $\dot\alpha_0$ denotes the right derivative with respect to resistance.
The folded equations and their positive-time difference quotients yield
\[
 \begin{aligned}
 (D_\beta)_t&=(D_\beta)_{xx}+N_\beta,
 & (D_\beta)_x(t,0)&=kD_\beta(t,0),\\
 (\partial_t D_\beta)_t&=(\partial_t D_\beta)_{xx}+M_\beta,
 & (\partial_t D_\beta)_x(t,0)&=k\partial_t D_\beta(t,0).
 \end{aligned}
\]
In particular, on the most recent unit time interval,
\begin{equation}\label{small:differentiated-robin-mild}
 \begin{split}
 D_\beta(t)&=\mathsf T_k(1)D_\beta(t-1)
       +\int_0^1\mathsf T_k(s)N_\beta(t-s)\,ds,\\
 \partial_t D_\beta(t)&=\mathsf T_k(1)\partial_t D_\beta(t-1)
       +\int_0^1\mathsf T_k(s)M_\beta(t-s)\,ds.
 \end{split}
\end{equation}
The definitions give
\[
 |N_\beta|\le\|f'\|_\infty|D_\beta|,\qquad
 |\partial_t D_\beta|+|M_\beta|
 \le C\bigl(|(p_\beta)_t|+|(q_\beta)_t|\bigr).
\]
Thus $D_\beta,N_\beta,\partial_t D_\beta,M_\beta$ are bounded by a common constant
on the half-line for sufficiently large times. Moreover,
$|D_\beta|\le((1-p_\beta)+(1-q_\beta))/2$.
By~\eqref{small:inner-saturation} and~\eqref{small:inner-time-decay}, all four
functions are bounded by $Ce^{-\omega t}$ on
$0\le y\le R_t:=c_{\rm in}(t-1)$ throughout $[t-1,t]$, once this interval
lies beyond the common decay threshold.
Multiply the boundary traces in~\eqref{small:differentiated-robin-mild}
by $k$ and split the integrals at $R_t$. Equations
\eqref{small:robin-trace-kernel}--\eqref{small:robin-trace-tail} give
\[
 \begin{split}
 k|D_\beta(t,0)|+k|\partial_t D_\beta(t,0)|
 &\le Ce^{-\omega t}\left(1+\int_0^1s^{-1/2}\,ds\right)
       +Ce^{-R_t^2/4}\\
 &\quad+C\int_0^1s^{-1/2}e^{-R_t^2/(4s)}\,ds
 \le Ce^{-\omega't}.
 \end{split}
\]
For each fixed $\beta$, the positive-time regularity used above also
permits differentiation of the boundary value $D_\beta(t,0)$ in time,
with derivative $\partial_t D_\beta(t,0)$. The Robin condition therefore gives
\[
 d_\beta(t)=-kD_\beta(t,0),\qquad
 d'_\beta(t)=-k\partial_t D_\beta(t,0).
\]
In particular $d_\beta\in C^1((0,\infty))$. The trace estimates prove
\eqref{small:flux-decay} after reducing the common decay rate if necessary;
no uniform estimate for $u_{tx}^\beta$ is needed.
\end{proof}

For the difference quotient $W_\beta$, the interface law
gives $[W_\beta]=d_\beta$. Removing this jump by a localized spatial lifting
produces source terms involving both $d_\beta$ and $d'_\beta$.
The lemma makes these terms uniformly exponentially small at large times,
which will be used to control the difference quotient in the next subsection.

\subsection{Leading-edge control of the difference quotient and interchange of limits}

Retain $W_\beta=(u^\beta-U)/\beta$ and define the secant coefficient
\begin{equation}\label{small:quotient-coefficient}
 c_\beta(t,x)=\int_0^1f'\bigl(U(t,x)+\theta(u^\beta(t,x)-U(t,x))\bigr)\,d\theta.
\end{equation}
Then $(W_\beta)_t=(W_\beta)_{xx}+c_\beta W_\beta$ away from the interface, with
\begin{equation}\label{small:quotient-jump}
 [W_\beta]=d_\beta,\qquad [(W_\beta)_x]=0.
\end{equation}
Concavity gives $c_\beta\le1$. We verify the common bounds required by
Corollary~\ref{cor:signed-amplitudes} to obtain a moment error that remains
uniform after division by $\beta$. This is the estimate needed to transfer
finite-time differentiability to the leading-edge amplitude.

\begin{lemma}\label{small:scaled-moment}
Under Assumptions~\ref{hyp:f}--\ref{hyp:concave}, fix $\beta_*>0$ and
$0<\rho<1/3$, and use $\mathcal J_\beta$ from~\eqref{front:uniform-moment}
with $\ell(t)=m(t)+t^\rho$.
There are constants $C,T$, depending on $f,u_0,\beta_*,\rho$ but not on
$0<\beta\le\beta_*$, such that, for $t\ge T$,
\begin{equation}\label{small:moment-uniform-error}
 \left|\frac{\mathcal J_\beta(t)-\mathcal J_0(t)}\beta
 -2\sqrt\pi\frac{\alpha_\beta-\alpha_0}\beta\right|
 \le Ct^{\rho-1/2}.
\end{equation}
\end{lemma}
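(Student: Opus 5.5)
We plan to apply Corollary~\ref{cor:signed-amplitudes} to the family $W=W_\beta$, $0<\beta\le\beta_*$, on the receiving side $x>0$. By linearity of the moment, $(\mathcal J_\beta-\mathcal J_0)/\beta$ is exactly the moment in \eqref{front:signed-moment} for $W_\beta$. By linearity of the profile limits, the amplitude $\alpha_{W_\beta}$ equals $(\alpha_\beta-\alpha_0)/\beta$; this follows from comparing the profile statements of Lemma~\ref{lem:leading-edge} and Corollary~\ref{cor:signed-amplitudes} at a fixed positive self-similar coordinate. Then \eqref{front:signed-moment} with common constants is precisely \eqref{small:moment-uniform-error}. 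So the whole task is to verify the hypotheses of the corollary with constants independent of $\beta$.

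The hypotheses are checked as follows.
\begin{enumerate}
\item \emph{Equation.} On $x>0$ we have $(W_\beta)_t=(W_\beta)_{xx}+c_\beta W_\beta$, with $c_\beta$ from \eqref{small:quotient-coefficient}. Concavity gives $c_\beta\le1$.
\item \emph{Coefficient bound.} Since $f\in C^2$ and $f'(0)=1$, we get $|1-c_\beta|\le\|f''\|_\infty(U+u^\beta)$, which is the required hypothesis with $\beta_1=0$ and $\beta_2=\beta$.
\item \emph{Initial weighted bound.} For a fixed $T_0$, \eqref{small:finite-gaussian} gives $\sup_\beta\|Z_{W_\beta}(\log T_0,\cdot)\|_{1/5}<\infty$, as already recorded after that estimate.
\end{enumerate}

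The remaining hypothesis, and the main obstacle, is the superexponentially small bound on $Z_{W_\beta}$ at the negative cutoff $x=m(t)-t^\rho$, uniform in $\beta$. The difficulty is that the naive bound $|W_\beta|\le C_T$ grows with $t$. We will first obtain a uniform bound $\sup_{t\ge T_0}\sup_{x>0}|W_\beta(t,x)|\le C$ on the bulk region, and for this we remove the interface jump. Let $\chi$ be a smooth cutoff equal to $1/2$ near $0^+$, $-1/2$ near $0^-$ (so its jump is $-1$), compactly supported in $|x|\le1$. Set $\widetilde W=W_\beta+d_\beta(t)\chi(x)$. Then $\widetilde W$ is continuous across $0$ with continuous derivative, and it satisfies
\[
 \widetilde W_t=\widetilde W_{xx}+c_\beta\widetilde W+F_\beta,
\]
with the source $F_\beta=d_\beta'\chi-d_\beta\chi''-c_\beta d_\beta\chi$. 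By Lemma~\ref{small:uniform-flux}, $|F_\beta|\le Ce^{-\omega t}$ uniformly. Kato's inequality (noted in the proof of Corollary~\ref{cor:signed-amplitudes}) gives $\partial_t|\widetilde W|-\partial_{xx}|\widetilde W|\le c_\beta|\widetilde W|+|F_\beta|$.

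Behind the front, on $|x|\le c_{\rm in}t$, we have $c_\beta\le-a_*$ by \eqref{small:inner-saturation}. Comparison with $\Psi$ from \eqref{small:saturation-screen} plus $Ce^{-\omega t}$ therefore gives exponential decay of $W_\beta$ there. On $x\ge c_{\rm in}t$, we use the crude bound $|W_\beta|\le|u^\beta-U|/\beta$. We write $u^\beta-U$ through the Duhamel formula with the kernel-quotient operators $\mathcal D_\beta$ of \eqref{small:kernel-quotient}. The jump input at the interface is $\int_0^t d_\beta(s)\,\partial_y\mathcal K(t,x;s,0)\,ds$, as in \eqref{small:variation-green}. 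With $c_\beta\le1$ this is dominated by $\int_0^t|d_\beta(s)|e^{t-s}|G'_{t-s}(x)|\,ds$. At $x=m(t)-t^\rho$ and $x\ge c_{\rm in}t$, the exponential decay of $d_\beta$ combined with the Gaussian factor yields a bound $C\,t\,e^{-x}e^{2t}\cdot e^{-\omega' \min(s,t-s)}$-type, which after the normalization in $Z_W$ is controlled.

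The cutoff estimate actually needed is milder. Since $Z_W(\sigma,0)=t^{-1/2}e^{-t^\rho}W(t,m(t)-t^\rho)$, a uniform polynomial bound $|W_\beta(t,m(t)-t^\rho)|\le Ct^N$ suffices: it gives $|Z_W(\sigma,0)|\le Ct^N e^{-t^\rho}\le Me^{-c_0e^{\rho\sigma}}$ with $c_0=1/2$. To get this polynomial bound, we compare $|\widetilde W|$ on the full line with $\Phi(t,x)=A\,t\,e^{x-2t}$-free barriers. Concretely, we use the supersolution $C\bigl(1+\int_0^t e^{-\omega s}\,ds\bigr)e^{\lambda(t)}$, restricted by the screened inner decay, and then the Gaussian-type bound for $x\ge c_{\rm in}t$ obtained from $\partial_t|\widetilde W|\le|\widetilde W|_{xx}+|\widetilde W|+Ce^{-\omega t}\mathbf 1_{|x|\le1}$. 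The source sits at the origin and is exponentially small, so its contribution at $x=m(t)-t^\rho$ is at most $\int_0^t e^{-\omega s}e^{t-s}G_{t-s}(x)\,ds\le Ct^{N}e^{-t^\rho}$. Indeed $e^{t}G_t(m(t)-t^\rho)\approx t\,e^{-t}e^{t^\rho}$, and the exponent's dependence on $s$ is favorable. The contribution of the initial data at $T_0$ is handled by \eqref{small:finite-gaussian}.

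This source computation is where we expect the delicate bookkeeping, namely tracking that the $e^{-\omega s}$ gain beats the loss from starting the Gaussian at the interface rather than at the front. If it fails near $s\approx t$, we will instead first propagate the bound only to $x=c_{\rm in}t$ and treat the region beyond with the corollary's own supersolution argument.
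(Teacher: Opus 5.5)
Your reduction is the right one: linearity of the moment, $\alpha_{W_\beta}=(\alpha_\beta-\alpha_0)/\beta$, $c_\beta\le1$, $0\le1-c_\beta\le C(u^\beta+U)$, the $X_{1/5}$ bound from \eqref{small:finite-gaussian}, and the jump lifting. You also correctly isolate the negative-cutoff trace as the crux. However, the argument you give for that trace fails, so there is a genuine gap.

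\textbf{Why the trace estimate fails.} Any comparison that uses only $c_\beta\le1$ on $c_{\rm in}t\le x\le m(t)-t^\rho$ propagates with the free kernel $e^{t-s}G_{t-s}$. At the cutoff,
\[
 e^{t}G_t\bigl(m(t)-t^\rho\bigr)=\frac{1+o(1)}{\sqrt{4\pi}}\,t\,e^{t^\rho},
\]
not $t\,e^{-t}e^{t^\rho}$; you dropped the factor $e^t$. Consequently:
\begin{itemize}
\item The data at $T_0$ contribute order $t\,e^{t^\rho}$ to $W_\beta(t,m(t)-t^\rho)$.
\item The exponentially small interface source contributes the same order, since near $s=T_0$ its integrand is about $e^{-(2+\omega)s}\,t\,e^{t^\rho}$.
\item Hence $|Z_{W_\beta}(\sigma,0)|=t^{-1/2}e^{-t^\rho}|W_\beta|$ is only bounded by $C\sqrt t$, not by $Me^{-c_0t^\rho}$. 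The sign of $t^\rho$ in your claimed bound $Ct^Ne^{-t^\rho}$ is reversed. This is the same reason \eqref{eq:uniform-leading-bound} is useless at $z=-t^\rho$.
\end{itemize}

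\textbf{Why your other steps do not rescue it.}
\begin{itemize}
\item The $\Psi$-screen of Lemma~\ref{small:uniform-flux} reaches only $|x|\le c_{\rm in}t$, far behind $m(t)-t^\rho\approx2t$. Applying it to $W_\beta$ would also need uniform-in-time lateral bounds at $|x|=c_{\rm out}t$. You do not have these; for $u_t^\beta$ they came from \eqref{small:uniform-time-derivative}.
\item The fallback of running the corollary's supersolution from $x=c_{\rm in}t$ fails for the same reason: that supersolution discards the absorption, which is exactly what is needed there.
\item Domination of $\partial_y\mathcal K$ by $e^{t-s}|G'_{t-s}|$ does not follow from $c_\beta\le1$. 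Comparison controls kernels, not their derivatives, and $c_\beta$ jumps at the source point.
\end{itemize}

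\textbf{The missing idea.} You need absorption up to a fixed distance behind the front, uniformly in $\beta$.
\begin{enumerate}
\item Combine parameter ordering (Proposition~\ref{prop:pointwise-order}, a second use of concavity), phase convergence at $\beta_*$, reflection comparison, and a Dirichlet heat lower bound on the donor side. Together these give $c_\beta\le-a_*$ for all $x\le m(t)-K_*$, donor side included.
\item In the moving frame, use the $C^1$ screen $\psi(z)=e^{\lambda(z+K_*)}$ for $z<-K_*$ and $\psi(z)=1+\lambda(z+K_*)$ for $z\ge-K_*$. Take $\lambda>1$ with $\lambda^2<1+a_*$ and $2(\lambda-1)<\omega$.
\item Compare $|V_\beta|$ on the full line, via Kato's inequality with source, against $A(t)e^{2t-x}\psi(x-m(t))$ with $A$ bounded. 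The Gaussian bound at the starting time handles both tails.
\end{enumerate}
This yields $|W_\beta(t,m(t)-t^\rho)|\le Ct^{3/2}e^{-(\lambda-1)t^\rho}$, hence $|Z_{W_\beta}(\log t,0)|\le Cte^{-\lambda t^\rho}$, which is what the corollary requires.

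A minor point: with your $\chi$, $[\chi]=+1$, so the lift should be $W_\beta-d_\beta\chi$, not $W_\beta+d_\beta\chi$.
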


\begin{proof}
We first establish a negative potential behind the front. Choose
$\delta,a_*>0$ so that $f'(s)\le-a_*$ on $[1-\delta,1]$.
Parameter ordering and phase convergence for $\beta_*$ allow a fixed
$K_*>0$ and a common starting time such that
$q_\beta(t,x)\ge1-\delta$ for $0\le x\le m(t)-K_*$ and
$0\le\beta\le\beta_*$. Reflection comparison gives the same lower bound
for $p_\beta$ in this range.
To cover the remaining donor-side region, put $L=-a>0$, where $a$ is from
Assumption~\ref{hyp:data}, and consider the Dirichlet heat solution
\[
 v(t,y)=\int_L^\infty[G_t(y-s)-G_t(y+s)]\,ds,\qquad y>0.
\]
Its initial datum lies below $p_\beta(0,\cdot)$, its boundary value is zero,
and $f(p_\beta)\ge0$, so $p_\beta\ge v$ for every $0\le\beta\le\beta_*$.
For $y\ge L$,
\[
 1-v(t,y)=\frac12\erfc\!\left(\frac{y-L}{2\sqrt t}\right)
          +\frac12\erfc\!\left(\frac{y+L}{2\sqrt t}\right).
\]
Since $m(t)-K_*\sim2t$, this deficit is at most $Ce^{-ct}$ uniformly for
$y\ge m(t)-K_*$ at large times. Thus both $u^\beta$ and $U$ lie in
$[1-\delta,1]$ whenever $x\le m(t)-K_*$. The entire secant segment lies
in that interval, proving
\begin{equation}\label{small:negative-bulk-potential}
 c_\beta(t,x)\le-a_*\qquad(x\le m(t)-K_*),
\end{equation}
for all sufficiently large times, uniformly in $0<\beta\le\beta_*$.

Take an even $\chi\in C_c^\infty(\mathbb R)$ equal to one near zero and set
\begin{equation}\label{small:jump-lift}
 V_\beta=W_\beta-\frac12d_\beta(t)\operatorname{sgn}(x)\chi(x).
\end{equation}
By~\eqref{small:quotient-jump}, $V_\beta$ and its first spatial derivative
are continuous across zero. Its full-space equation holds in distributions:
\begin{equation}\label{small:lifted-equation}
 (V_\beta)_t=(V_\beta)_{xx}+c_\beta V_\beta+\mathcal F_\beta,
 \quad
 \mathcal F_\beta=\frac{\operatorname{sgn}(x)}2
 \bigl(d_\beta\chi''+c_\beta d_\beta\chi-d'_\beta\chi\bigr).
\end{equation}
There are no singular interface terms because the two jumps vanish.
Lemma~\ref{small:uniform-flux} gives $|\mathcal F_\beta|\le Ce^{-\omega t}$,
with support in the fixed compact set $\operatorname{supp}\chi$.

In the coordinate $z=x-m(t)$, set $r_\beta(t,z)=e^{x-2t}V_\beta(t,x)$.
Then
\begin{equation}\label{small:weighted-lift-equation}
 (r_\beta)_t=(r_\beta)_{zz}-\frac3{2t}(r_\beta)_z
 -(1-c_\beta)r_\beta+e^{x-2t}\mathcal F_\beta,
\end{equation}
where the coefficients on the right are evaluated at $x=m(t)+z$.
Choose $\lambda>1$ close enough to one that
$\lambda^2<1+a_*$ and $2(\lambda-1)<\omega$, and define
\begin{equation}\label{small:psi-screen}
 \psi(z)=\begin{cases}
 e^{\lambda(z+K_*)},&z<-K_*,\\
 1+\lambda(z+K_*),&z\ge-K_*.
 \end{cases}
\end{equation}
For $\mathscr Q_\beta=\partial_t-\partial_{zz}
+\frac3{2t}\partial_z+(1-c_\beta)$, we have
$\mathscr Q_\beta\psi\ge0$ at large times. On the left this follows from
\eqref{small:negative-bulk-potential} and $1+a_*-\lambda^2>0$;
on the right it follows from $c_\beta\le1$ and $\psi'\ge0$.
The weight is $C^1$, so its second derivative has no measure at the join.

Choose a common $T$ large enough that these estimates hold and
$\operatorname{supp}\chi\subset(-\infty,m(t)-K_*)$ for $t\ge T$.
On the support of the forcing,
\[
 \sup_x\frac{e^{x-2t}|\mathcal F_\beta(t,x)|}{\psi(x-m(t))}
 \le C e^{-[\omega-2(\lambda-1)]t}t^{-3\lambda/2}.
\]
Accordingly, choose the same bounded increasing function for all parameters,
\[
 A(t)=A_0+C\int_T^t
 e^{-[\omega-2(\lambda-1)]s}s^{-3\lambda/2}\,ds,
\]
with $C$ large enough to dominate this forcing bound.
The Gaussian estimate~\eqref{small:finite-gaussian} and the compact support
of the lifting allow a common $A_0$ such that
$|V_\beta(T,x)|\le A_0e^{2T-x}\psi(x-m(T))$ for all $x$.
To apply comparison, return to the original variables and set
\[
 \mathcal V(t,x)=A(t)e^{2t-x}\psi(x-m(t)).
\]
Then $(\partial_t-\partial_{xx}-c_\beta)\mathcal V\ge|\mathcal F_\beta|$
in distributions. The function $\mathcal V$ is bounded on each strip
$[T,T_1]\times\mathbb R$, $T_1>T$: as $x\to-\infty$ it decays like
$e^{(\lambda-1)x}$, and as $x\to+\infty$ like a linear factor times $e^{-x}$.
The same boundedness holds uniformly in $\beta$ for $V_\beta$ by
\eqref{small:finite-quotient-bound} and~\eqref{small:flux-decay}.
Although $c_\beta$ may jump at zero, it is a bounded measurable coefficient
on the full line. The convex approximation used in the proof of
Corollary~\ref{cor:signed-amplitudes}, now including the source, gives
\[
 (\partial_t-\partial_{xx}-c_\beta)|V_\beta|\le|\mathcal F_\beta|
\]
in distributions. The approximation is valid across zero because
$V_\beta$ and $(V_\beta)_x$ are continuous there, so no interface measure
is generated. To justify full-space comparison, set
$Z=|V_\beta|-\mathcal V$ and apply the positive-part version of the same
inequality. With $M=\|c_\beta\|_\infty\le\|f'\|_\infty$, heat-kernel
testing of the bounded distributional subsolution gives
\[
 \|Z_+(t)\|_\infty\le M\int_T^t\|Z_+(s)\|_\infty\,ds,
 \qquad Z_+(T)=0.
\]
The testing can be performed first with smooth spatial cutoffs and then
on the whole line; boundedness permits removal of the cutoffs against the
Gaussian kernel. Gronwall's inequality yields $Z_+=0$, hence
$|V_\beta|\le\mathcal V$. Since $A$ is uniformly bounded,
\begin{equation}\label{small:weighted-quotient-screen}
 |r_\beta(t,z)|\le C\psi(z).
\end{equation}

We now check the hypotheses of Corollary~\ref{cor:signed-amplitudes}.
Choose a common $T_0\ge T$ large enough that $x=m(t)-t^\rho$ lies to the
right of $\operatorname{supp}\chi$ and $-t^\rho<-K_*$ for $t\ge T_0$.
At this negative cutoff, $V_\beta=W_\beta$ and the normalized trace satisfies
\[
 |Z_{W_\beta}(\log t,0)|
 =|t e^{x-2t}W_\beta(t,x)|
 \le Ct e^{-\lambda t^\rho}
 \le C e^{-(\lambda/2)t^\rho}.
\]
The equality uses $m(t)=2t-\frac32\log t$.
At the fixed time $T_0$,~\eqref{small:finite-gaussian} supplies the common
$X_{1/5}$ bound for $Z_{W_\beta}(\log T_0,\cdot)$ established in the first
subsection. The solutions $W_\beta$ are classical on the right half-line,
continuous up to $T_0$, and uniformly bounded on every finite time strip.
Their coefficients are continuous there, and concavity and $f'(0)=1$ give
\[
 c_\beta\le1,\qquad
 0\le1-c_\beta\le C(u^\beta+U).
\]
With the cutoff $\chi$ fixed once and for all, the quantities
$K_*,\lambda,T,T_0$ and the constants in these bounds can all be chosen
in terms of $f,u_0,\beta_*,\rho$, independently of
$\beta\in(0,\beta_*]$. Thus all hypotheses of the corollary hold with
common constants and with the two background solutions $u^\beta,u^0$.
It yields signed amplitudes $\alpha_{W_\beta}$ and a common moment error.
For each fixed $\beta>0$, linearity of the moment and
Lemma~\ref{lem:leading-edge} identify
\[
 \alpha_{W_\beta}=\frac{\alpha_\beta-\alpha_0}{\beta}.
\]
Substituting this identity in~\eqref{front:signed-moment} proves
\eqref{small:moment-uniform-error}.
\end{proof}

\begin{proposition}\label{small:amplitude-derivative}
Under Assumptions~\ref{hyp:f}--\ref{hyp:concave}, the right derivative
$\dot\alpha_0$ of $\alpha_\beta$ at zero exists and is finite.
For every $0<\theta<1/3$,
\begin{equation}\label{small:response-amplitude}
 \dot\alpha_0=\lim_{t\to\infty}
 \frac{e^{t^\theta}w(t,m(t)+t^\theta)}{t^\theta}.
\end{equation}
Consequently,
\begin{equation}\label{small:delay-expansion}
 \tau_\beta=K\beta+o(\beta)\qquad(\beta\downarrow0),\qquad K=-\frac{\dot\alpha_0}{2\alpha_0}.
\end{equation}
\end{proposition}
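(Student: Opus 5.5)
The plan is to combine the uniform moment estimate of Lemma~\ref{small:scaled-moment} with the finite-time convergence $W_\beta\to w$ from Proposition~\ref{small:finite-variation}. We interchange the limits $\beta\downarrow0$ and $t\to\infty$ in a Cauchy-type argument.

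First fix $0<\rho<1/3$ and a large time $t\ge T$. Define the finite-time first moment of $w$,
\[
 \mathcal J_w(t)=e^{-2t}\int_{\ell(t)}^\infty(x-\ell(t))e^xw(t,x)\,dx .
\]
By~\eqref{small:finite-expansion} we have $W_\beta(t,\cdot)\to w(t,\cdot)$ uniformly. The Gaussian bound~\eqref{small:finite-gaussian}, with a constant depending on $t$ but not on $\beta$, gives an integrable majorant for $(x-\ell)e^xW_\beta(t,x)$. Dominated convergence then yields
\[
 \frac{\mathcal J_\beta(t)-\mathcal J_0(t)}{\beta}\to\mathcal J_w(t)\qquad(\beta\downarrow0),
\]
for each fixed $t$. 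Now write $a_\beta=(\alpha_\beta-\alpha_0)/\beta$. Estimate~\eqref{small:moment-uniform-error} gives $|2\sqrt\pi a_\beta-\mathcal J_\beta^{\rm dq}(t)|\le Ct^{\rho-1/2}$ uniformly in $\beta$, where $\mathcal J_\beta^{\rm dq}$ denotes the difference quotient of the moments. Hence for $\beta,\beta'$ small,
\[
 2\sqrt\pi|a_\beta-a_{\beta'}|\le 2Ct^{\rho-1/2}+|\mathcal J^{\rm dq}_\beta(t)-\mathcal J^{\rm dq}_{\beta'}(t)|.
\]
Letting $\beta,\beta'\downarrow0$ and then $t\to\infty$ shows that $a_\beta$ is Cauchy. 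So $\dot\alpha_0$ exists, is finite (the $a_\beta$ are bounded by Corollary~\ref{cor:signed-amplitudes}), and satisfies $|2\sqrt\pi\dot\alpha_0-\mathcal J_w(t)|\le Ct^{\rho-1/2}$.

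Next we identify $\dot\alpha_0$ through~\eqref{small:response-amplitude}. The cleanest route is to apply Corollary~\ref{cor:signed-amplitudes} directly to $w$. On $x>0$, $w$ solves $w_t=w_{xx}+f'(U)w$ with $c=f'(U)\le1$ by concavity, and $|1-c|\le CU$. Its negative-cutoff trace and initial weighted bound follow by passing to the limit in the corresponding bounds for $W_\beta$: the screen~\eqref{small:weighted-quotient-screen} gives $|t e^{x-2t}W_\beta|\le C\psi$ uniformly in $\beta$, so the same holds for $w$. The Gaussian bound~\eqref{small:finite-gaussian} gives the $X_{1/5}$ bound at $T_0$. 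The corollary then yields an amplitude $\alpha_w$ with
\[
 \left|\mathcal J_w(t)-2\sqrt\pi\alpha_w\right|\le Ct^{\rho-1/2},
\]
and also the intermediate-scale limit $e^{t^\theta}w(t,m(t)+t^\theta)/t^\theta\to\alpha_w$ for $\rho<\theta<1/3$. Comparing the two moment estimates as $t\to\infty$ gives $\alpha_w=\dot\alpha_0$. Since $\rho$ may be taken arbitrarily small (the hypotheses hold for every small $\rho$, with amplitudes agreeing as noted after the corollary), the limit holds for every $0<\theta<1/3$.

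Finally, the delay expansion is immediate. Since $\alpha_\beta=\alpha_0+\dot\alpha_0\beta+o(\beta)$ and $\alpha_0>0$, the formula~\eqref{order:tau-alpha} gives
\[
 \tau_\beta=-\tfrac12\log\bigl(1+\tfrac{\dot\alpha_0}{\alpha_0}\beta+o(\beta)\bigr)=-\tfrac{\dot\alpha_0}{2\alpha_0}\beta+o(\beta).
\]

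The main obstacle is the verification of the corollary's hypotheses for $w$ itself, since $w$ has no finite-$\beta$ nonlinear background. I expect it to go through by passing to the limit $\beta\downarrow0$ in the $\beta$-uniform screens of Lemma~\ref{small:scaled-moment}, which hold on each finite time strip. If that is delicate, the Cauchy argument of the first paragraph already gives existence of $\dot\alpha_0$. The profile estimate $\|P_{W_\beta}-a_\beta\phi\|_{1/6}\le Ct^{\rho-1/2}$, uniform in $\beta$, then passes to the limit pointwise to give $\|P_w-\dot\alpha_0\phi\|_{1/6}\le Ct^{\rho-1/2}$. Evaluating this at $y=t^{\theta-1/2}-d$ yields~\eqref{small:response-amplitude} directly.
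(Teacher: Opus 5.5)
Your proposal is correct and follows the paper's proof essentially step for step. It uses dominated convergence of the moment difference quotients at fixed $t$, then the uniform error \eqref{small:moment-uniform-error} to get existence of $\dot\alpha_0$; your Cauchy argument is equivalent to the paper's $\limsup$/$\liminf$ argument. It then applies Corollary~\ref{cor:signed-amplitudes} to $w$, with $u^0$ as both backgrounds, to identify $\alpha_w=\dot\alpha_0$ and obtain the intermediate-scale limit. One minor slip: the screen \eqref{small:weighted-quotient-screen} bounds $e^{x-2t}V_\beta$, not $t e^{x-2t}W_\beta$, by $C\psi$, but the extra factor $t$ at the negative cutoff is absorbed by $\psi(-t^\rho)\sim e^{-\lambda t^\rho}$. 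Your fallback, passing the $\beta$-uniform profile estimate to the limit, is also valid.
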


\begin{proof}
Fix $0<\rho<1/3$. For each fixed, sufficiently large $t$, finite-time
convergence from Proposition~\ref{small:finite-variation} and the common
Gaussian bound~\eqref{small:finite-gaussian} permit dominated convergence:
\[
 \frac{\mathcal J_\beta(t)-\mathcal J_0(t)}\beta
 \longrightarrow
 \mathcal J_w(t):=e^{-2t}\int_{\ell(t)}^\infty
 (x-\ell(t))e^xw(t,x)\,dx
 \qquad(\beta\downarrow0).
\]
At one fixed such time, Lemma~\ref{small:scaled-moment} therefore bounds
$(\alpha_\beta-\alpha_0)/\beta$ for small $\beta$.
Applying its uniform error estimate at any sufficiently large $t$ gives
\[
 2\sqrt\pi\left(
 \limsup_{\beta\downarrow0}\frac{\alpha_\beta-\alpha_0}{\beta}
 -\liminf_{\beta\downarrow0}\frac{\alpha_\beta-\alpha_0}{\beta}\right)
 \le 2Ct^{\rho-1/2}.
\]
Letting $t\to\infty$ shows that the two finite limits coincide, proving
existence of $\dot\alpha_0$. Passing $\beta\downarrow0$ in the same estimate
now yields the quantitative bound
\begin{equation}\label{small:response-moment-bound}
 |\mathcal J_w(t)-2\sqrt\pi\dot\alpha_0|\le Ct^{\rho-1/2}.
\end{equation}
In particular,
\begin{equation}\label{small:response-moment-limit}
 \dot\alpha_0=\frac1{2\sqrt\pi}\lim_{t\to\infty}\mathcal J_w(t).
\end{equation}

The moment $\mathcal J_w$ uses the fixed cutoff exponent $\rho$, but
\eqref{small:response-moment-limit} identifies its limiting amplitude with
the same derivative $\dot\alpha_0$ for every $0<\rho<1/3$.
The common negative-cutoff trace bound passes to $w$ by pointwise
convergence at each fixed time. At $T_0$, the common Gaussian bound
\eqref{small:finite-gaussian} passes to $w$ as well and gives the initial
$X_{1/5}$ estimate: equivalently, take the pointwise limit in the common
weighted bound for $Z_{W_\beta}$ and then the supremum over $y\ge0$.
Its equation has coefficient $f'(U)$, which is continuous, bounded, and satisfies
\[
 f'(U)\le1,\qquad 0\le1-f'(U)\le\|f''\|_\infty U.
\]
Together with the regularity and finite-strip boundedness from
Proposition~\ref{small:finite-variation}, these verify the hypotheses of
Corollary~\ref{cor:signed-amplitudes} for $w$, using $u^0$ as both background
solutions. The corollary gives an amplitude $\alpha_w$;
\eqref{small:response-moment-limit} identifies it as $\dot\alpha_0$.
For any fixed $0<\theta<1/3$, choose $0<\rho<\theta$ and use
\eqref{front:signed-intermediate} to obtain~\eqref{small:response-amplitude}.
Finally, substituting $\alpha_\beta=\alpha_0+\beta\dot\alpha_0+o(\beta)$
into~\eqref{order:tau-alpha} proves~\eqref{small:delay-expansion}.
\end{proof}

We have established the first-order expansion and identified its coefficient
through the linear variation $w$. The next subsection gives a further linear
characterization and proves a strict lower bound for this coefficient.

\subsection{A strict lower bound and a linear characterization of the coefficient}

We retain Assumptions~\ref{hyp:f}--\ref{hyp:concave} and take all background
quantities from $U=u^0$. The first variation from
Proposition~\ref{small:finite-variation} determines the responses of the
folded mean and difference. For $x>0$, set
\[
 p(t,x)=U(t,-x),\qquad q(t,x)=U(t,x),
\]
and define
\[
 R(t,x)=-\frac{w(t,-x)+w(t,x)}2,
 \qquad H(t,x)=\frac{w(t,-x)-w(t,x)}2.
\]
By~\eqref{small:finite-expansion}, these functions equal
$-\partial_\beta S_\beta|_{0^+}$ and
$\partial_\beta D_\beta|_{0^+}$, respectively, with the folded variables
from~\eqref{eq:fold}. Let
\[
 A=\frac{f'(p)+f'(q)}2,\qquad
 B=\frac{f'(q)-f'(p)}2.
\]
Reflection comparison and concavity give $B\ge0$, and $A,B$ are bounded.
The equations for $w$ on the two half-lines give
\begin{equation}\label{small:cooperative-response}
 \begin{cases}
 R_t=R_{xx}+AR+BH,&x>0,\\
 H_t=H_{xx}+AH+BR,&x>0,\\
 R_x(t,0)=0,\\
 H(t,0)=-\dfrac12U_x(t,0),\\
 R(0,x)=H(0,x)=0.
 \end{cases}
\end{equation}
Indeed, the interface conditions in~\eqref{small:variation-problem} imply
\[
 R_x(t,0)=\frac{w_x(t,0^-)-w_x(t,0^+)}2=0,
 \qquad H(t,0)=-\frac{[w](t)}2=-\frac12U_x(t,0).
\]
The functions $R,H$ are bounded on every finite time strip, classical in
the interior for positive times, and have the displayed boundary traces;
their zero initial data are attained uniformly. These properties also
specify the unique bounded solution of the cooperative linear problem.

\begin{proposition}\label{small:strict-coefficient}
Under Assumptions~\ref{hyp:f}--\ref{hyp:concave}, let $Z$ be the unique
solution, bounded on every finite time strip, of
\begin{equation}\label{small:positive-correction}
 \begin{cases}
 Z_t=Z_{xx}+f'(U)Z,&x>0,\\
 Z(t,0)=R(t,0),&t>0,\\
 Z(0,x)=0,&x>0.
 \end{cases}
\end{equation}
Here the equation holds classically in the interior for positive times,
the boundary condition holds by continuity, and the initial condition is
attained uniformly. Then, for every $0<\theta<1/3$, the limit
\begin{equation}\label{small:zeta-definition}
 \zeta=\lim_{t\to\infty}\frac{e^{t^\theta}Z(t,m(t)+t^\theta)}{t^\theta}
\end{equation}
exists, is independent of $\theta$, is strictly positive, and satisfies
\begin{equation}\label{small:coefficient-formula}
 \dot\alpha_0=-\frac{\alpha_0}{2}-\zeta,
 \qquad K=\frac14+\frac{\zeta}{2\alpha_0}>\frac14.
\end{equation}
\end{proposition}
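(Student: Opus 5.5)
The plan is to read off the receiving-side part of $w$ as a difference of two solutions of the same linear half-line problem, one of which is explicit. For $x>0$ the definitions of $R$ and $H$ give $R+H=-w(t,x)$, so $w(t,x)=-R(t,x)-H(t,x)$ on the receiving side. On $x>0$, $w$ solves $w_t=w_{xx}+f'(q)w$ with $f'(q)=f'(U)$, zero initial data, and boundary trace
\[
 w(t,0^+)=-R(t,0)-H(t,0)=-R(t,0)+\tfrac12U_x(t,0).
\]
Now consider $v=q_x=U_x|_{x>0}$. The proof of Lemma~\ref{lem:reflection-comparison}, at $\beta=0$, shows that $v$ solves the same linear equation on $x>0$, has boundary value $U_x(t,0)$, and has initial value zero, attained uniformly as justified there from the kernel. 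By linearity and uniqueness of bounded solutions of the Dirichlet problem on finite strips, I would conclude
\[
 w(t,x)=\tfrac12 q_x(t,x)-Z(t,x),\qquad x>0.
\]
The boundedness of $R(t,0)$ on finite strips and its continuity for $t>0$ come from Proposition~\ref{small:finite-variation}, which also gives existence and uniqueness of $Z$.

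Next I identify the amplitudes. By the gradient limit in \eqref{front:intermediate} with $\beta=0$, $e^{t^\theta}q_x(t,m(t)+t^\theta)/t^\theta\to-\alpha_0$ for every $0<\theta<1/3$. By Proposition~\ref{small:amplitude-derivative}, the corresponding normalized limit for $w$ is $\dot\alpha_0$. Hence the limit \eqref{small:zeta-definition} for $Z=\frac12q_x-w$ exists for every $\theta$, equals $-\alpha_0/2-\dot\alpha_0$, and is independent of $\theta$ because both other limits are. This gives $\dot\alpha_0=-\alpha_0/2-\zeta$. Then $K=-\dot\alpha_0/(2\alpha_0)=\frac14+\zeta/(2\alpha_0)$.

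It remains to show $\zeta>0$, and this is the main obstacle.

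\textbf{Step 1: $H,R\ge0$.} Apply the cooperative comparison used in Proposition~\ref{prop:pointwise-order} to \eqref{small:cooperative-response}. Here $B\ge0$, $R$ has a Neumann condition, and $H$ has Dirichlet data $H(t,0)=-\frac12U_x(t,0)=\frac12j_0(t)>0$ by Lemma~\ref{lem:reflection-comparison}. Damped Duhamel iteration then gives $H\ge0$ and $R\ge0$, and the strong maximum principle gives $H>0$ in the interior.

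\textbf{Step 2: $R>0$ up to $x=0$.} Here non-strict concavity must be handled. Since $R_t-R_{xx}-AR=BH\ge0$, it suffices to show that $BH\not\equiv0$ on every time interval $(0,t)$. We have $B=(f'(q)-f'(p))/2$, with $p>q$ in the interior by Lemma~\ref{lem:reflection-comparison}, and $f'$ is nonincreasing. So $B>0$ wherever $f'$ is not constant on $[q,p]$. Because $f'(0)=1$ and $f'(1)<0$, $f'$ is nonconstant on any interval $[s_1,s_2]$ with $s_1$ close to $0$ and $s_2$ close to $1$, whereas $f'$ is constant on $[q,p]$ only if $q,p$ lie in a common flat piece of $f'$. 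I would show that $[q,p]$ cannot remain inside one flat piece on a whole open set $\{x>0,\ 0<t<\varepsilon\}$. At short times $p(t,x)=U(t,-x)$ is close to $u_0(-x)$, which equals $1$ for $x\ge-a$, while $q(t,x)$ is small for $x>0$. So near $x=-a$, $t\downarrow0$, the pair $(q,p)$ spans almost all of $[0,1]$, forcing $B>0$ there. Once $BH>0$ on an open set, the strong maximum principle for the Neumann problem gives $R>0$ for $x>0$ at all later times. The Hopf lemma with $R_x(t,0)=0$ excludes $R(t,0)=0$, so $R(t,0)>0$ for $t>0$.

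\textbf{Step 3: $\zeta>0$.} Since $R(t,0)>0$, comparison gives $Z\ge0$, and the strong maximum principle gives $Z>0$ for $t>0$, $x>0$. Write $Z_t=Z_{xx}+c\,Z$ with $c=f'(U)$. This $c$ is bounded, and $c\ge1-\|f''\|_\infty U\ge1-Cte^{-z}$ at $x=m(t)+z$ by \eqref{eq:uniform-leading-bound}. So Lemma~\ref{lem:positive-tail} applies to $Z$ and gives $\liminf e^{t^\theta}Z(t,m(t)+t^\theta)/t^\theta>0$. Combined with the existence of the limit, this yields $\zeta>0$, and hence $K>\frac14$.
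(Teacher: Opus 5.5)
Your proposal is correct and follows the paper's proof essentially step for step: the same decomposition $w=\frac12U_x-Z$ on the receiving side, the same amplitude identification through \eqref{front:intermediate} and \eqref{small:response-amplitude}, the same cooperative positivity for $R,H$, and the same application of Lemma~\ref{lem:positive-tail} to $Z$.

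The variations are minor. You locate the open set where $B>0$ at short times in the region $x>-a$. Take $x$ strictly greater than $-a$, since at $x=-a$ itself $p$ need not tend to $1$. The paper instead fixes $s>0$ and sends $x\to\infty$. You also obtain $R(t,0)>0$ from the Hopf lemma, whereas the paper uses the strict positivity of the Neumann kernel at the boundary in the Duhamel formula. Both routes work.
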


\begin{proof}
Lemma~\ref{lem:reflection-comparison} gives $U_x(t,0)<0$ for $t>0$.
The positive-semigroup argument in the proof of
Proposition~\ref{prop:pointwise-order}, now with Neumann and Dirichlet
semigroups and nonnegative Dirichlet input, gives $R,H\ge0$ in
\eqref{small:cooperative-response}; the positive
Dirichlet boundary value of $H$ and the strong maximum principle give
$H(t,x)>0$ for $t>0,x>0$.
For each fixed $s>0$, the initial end states and the full-line comparisons
\[
 0\le1-U(s,\cdot)\le G_s*(1-u_0),\qquad
 0\le U(s,\cdot)\le e^sG_s*u_0
\]
give $p(s,x)\to1$ and $q(s,x)\to0$ as $x\to\infty$.
The first comparison uses $f(U)\ge0$, and the second uses $f(U)\le U$.
Since $f'(1)<f'(0)$, we have
$B(s,x)>0$ for sufficiently large $x$. Continuity therefore makes $BH$
strictly positive on a nonempty space-time open set before any prescribed
time $t>0$.
Choose $L\ge\|A\|_\infty$ and write
$R_t=R_{xx}-LR+(A+L)R+BH$. The Neumann heat kernel is strictly positive,
including at its boundary argument. Its Duhamel representation, with
nonnegative sources $(A+L)R$ and $BH$, thus gives $R(t,0)>0$ for every
$t>0$. This argument uses concavity, but does not require strict concavity.

On the right half-line, $-w=R+H$, and
\eqref{small:cooperative-response} shows that $R+H$ solves the scalar
linear equation with potential $A+B=f'(q)=f'(U)$.
The function $k=-U_x/2$ solves the same equation and has boundary value
$-U_x(t,0)/2$. To justify its zero initial datum despite the possible
roughness of $u_0$, recall that $u_0=0$ on $[b,\infty)$ for some $b<0$.
Differentiating the full-line heat representation of $U$ gives, with
$d=-b>0$,
\[
 \|U_x(t,\cdot)\|_{L^\infty(0,\infty)}
 \le Ct^{-1/2}e^{-d^2/(4t)}+C\sqrt t
 \longrightarrow0\qquad(t\downarrow0).
\]
Here the first term bounds the initial heat evolution on the right
half-line, and the second uses boundedness of $f(U)$ and
$\|G'_r\|_{L^1}\le Cr^{-1/2}$.
Consequently, $R+H-k$ is bounded on finite time strips, has zero initial
datum uniformly, and has boundary value $R(t,0)$. Uniqueness for
\eqref{small:positive-correction} identifies
\begin{equation}\label{small:variation-decomposition}
 Z=R+H-k,\qquad w=\frac12U_x-Z.
\end{equation}
This also constructs the solution $Z$ in the stated class. Comparison
and the strong maximum principle, using $R(t,0)>0$, give $Z(t,x)>0$
for all $t>0,x>0$.

The gradient limit in~\eqref{front:intermediate}, with $\beta=0$, gives
\begin{equation}\label{small:background-gradient}
 \lim_{t\to\infty}\frac{e^{t^\theta}U_x(t,m(t)+t^\theta)}{t^\theta}=-\alpha_0
 \qquad(0<\theta<1/3).
\end{equation}
Combining this with~\eqref{small:response-amplitude} and
\eqref{small:variation-decomposition} proves that the limit
in~\eqref{small:zeta-definition} exists and equals
$-\alpha_0/2-\dot\alpha_0$, independently of $\theta$.
To prove that it is strictly positive, we apply
Lemma~\ref{lem:positive-tail} directly to $Z$. The solution is positive,
classical, and bounded on every finite time strip away from $t=0$,
and its coefficient $f'(U)$ is bounded. Moreover,
\eqref{eq:uniform-leading-bound} and $f\in C^2([0,1])$ imply, for all
sufficiently large $t$ and $z\ge0$,
\[
 U(t,m(t)+z)\le Ct e^{-z},\qquad
 f'(U(t,m(t)+z))\ge1-\|f''\|_\infty U(t,m(t)+z)
 \ge1-Ct e^{-z}.
\]
All hypotheses of the lemma hold, so the lower limit of the normalized
$Z$ in~\eqref{small:zeta-definition} is strictly positive. Since its limit
already exists, $\zeta>0$. Substitution into
\eqref{small:delay-expansion} proves~\eqref{small:coefficient-formula}.
\end{proof}

The coefficient is thus determined entirely by the homogeneous background
and the linear problems~\eqref{small:cooperative-response} and
\eqref{small:positive-correction}. Together with
Proposition~\ref{small:amplitude-derivative}, this completes the proof of
Theorem~\ref{thm:small-beta}.
For general nonconcave KPP reactions,
Proposition~\ref{small:finite-variation} still applies, but the bound
$c_\beta\le1$ used in the uniform difference-quotient estimates is no
longer automatic. The present argument therefore does not establish the
far-field first-order expansion in that setting.
We now turn to the opposite regime of large resistance, where the
receiving-side amplitude is determined by the weak input through the interface.

\section{The strong-barrier limit}\label{sec:large-beta}

This section proves Theorem~\ref{thm:large-beta} under
Assumptions~\ref{hyp:f}--\ref{hyp:data}, without Assumption~\ref{hyp:concave}. Set $\varepsilon=\beta^{-1}$,
and retain $p_N,h,I$ from~\eqref{eq:donor}--\eqref{eq:donor-mass}.
We first identify the receiving-side profile generated by the weak input
on a fixed time interval. This profile supplies small initial data for a
lower bound on the amplitude; an adjoint-moment estimate for the sustained
input will then give the matching upper bound.

The folded interface conditions read
\[
 (p_\beta)_x(t,0)=j_\beta(t),\qquad
 (q_\beta)_x(t,0)=-j_\beta(t).
\]
By Lemma~\ref{lem:reflection-comparison}, $j_\beta\ge0$.
Thus the donor-side boundary condition describes a loss, and comparison
with the closed Neumann problem, which has the same initial datum, gives
$p_\beta\le p_N$. Equivalently, the outward derivative of $p_\beta$ at
zero is $-j_\beta\le0$, whereas that of $p_N$ is zero.
Together with reflection comparison, this yields
\begin{equation}\label{large:flux}
 0\le j_\beta(t)=\varepsilon\bigl(p_\beta(t,0)-q_\beta(t,0)\bigr)
 \le\varepsilon h(t)\le\varepsilon.
\end{equation}
Recall that $0<h(t)<1$ for $t>0$ and $0<I<1/2$.

\subsection{Linear input at a fixed time}

For the half-line heat equation with zero initial datum and prescribed
incoming flux $-v_x(t,0)=j(t)$, the boundary response is
$2\int_0^tG_{t-s}(x)j(s)\,ds$. Thus the boundary source kernel is
$2G_t(x)$. Using $f(q_\beta)\le q_\beta$ and~\eqref{large:flux}, linear
comparison gives, for $t>0$ and $x\ge0$,
\begin{equation}\label{large:linear-bound}
 q_\beta(t,x)\le2\varepsilon\int_0^t e^{t-s}G_{t-s}(x)h(s)\,ds
 \le\frac{C\varepsilon e^t}{\sqrt{1+t}}.
\end{equation}
The constant $C$ is independent of $t,x,\beta$. For the last inequality,
use $h\le1$, $G_r(x)\le(4\pi r)^{-1/2}$, and
\[
 \int_0^t\frac{e^{-s}}{\sqrt{t-s}}\,ds\le\frac{C}{\sqrt{1+t}};
\]
the bound follows directly for $t\le1$ and by splitting at $s=t/2$ for
$t\ge1$.

\begin{lemma}
\label{large:fixed-time-input}
Under Assumptions~\ref{hyp:f}--\ref{hyp:data}, define
\[
 v(t,x)=2\int_0^t e^{t-s}G_{t-s}(x)h(s)\,ds,\qquad v(0,x)=0.
\]
For every fixed $T>0$, there is $C_T$, depending only on $f,u_0,T$,
such that, for $0<\varepsilon\le1$ and $\beta=\varepsilon^{-1}$,
\begin{equation}\label{large:finite-time}
 \sup_{0\le t\le T}
 \left(\|p_N(t)-p_\beta(t)\|_\infty+\|q_\beta(t)\|_\infty\right)
 \le C_T\varepsilon,
\end{equation}
\begin{equation}\label{large:linear-input-error}
 \sup_{0\le t\le T}
 \left(\left|\frac{j_\beta(t)}\varepsilon-h(t)\right|
 +\left\|\frac{q_\beta(t)}\varepsilon-v(t)\right\|_\infty\right)
 \le C_T\varepsilon.
\end{equation}
Here $\|\cdot\|_\infty$ denotes the $L^\infty(0,\infty)$ norm.
At positive times we use the continuous representatives, so the bounds
also hold for the boundary traces. The flux is extended to $t=0$ by
$j_\beta(0)=0$.
In particular,
\begin{equation}\label{large:seed}
 \varepsilon^{-1}q_\beta(T,\cdot)\longrightarrow
 v_T:=v(T,\cdot)
 \qquad\text{uniformly on }[0,\infty).
\end{equation}
The profile $v_T$ is continuous and strictly positive on $[0,\infty)$.
After even extension to $\mathbb R$, it has the exponential moment
\begin{equation}\label{large:seed-mass}
 e^{-2T}\int_\mathbb R e^xv_T(x)\,dx
 =2\int_0^T e^{-2s}h(s)\,ds=:2I_T.
\end{equation}
\end{lemma}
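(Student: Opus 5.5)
The proof has three layers: a donor-side Gronwall estimate, a flux identity, and a receiving-side linearization, all carried out on the fixed interval $[0,T]$ with constants that may grow with $T$.

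\textbf{Donor side.} The difference $e=p_N-p_\beta\ge0$ solves a linear equation $e_t=e_{xx}+c\,e$ on $x>0$, where $c$ is the secant coefficient of $f$ and satisfies $|c|\le\|f'\|_\infty$. Its initial datum is zero, and its Neumann boundary data are $e_x(t,0)=-j_\beta(t)$, which is bounded by $\varepsilon$ in absolute value by \eqref{large:flux}. Write $e$ through the Neumann heat kernel, whose boundary source kernel is $2G_{t-s}(x)$. Since $\int_0^t 2G_{t-s}(x)\,ds\le C\sqrt t$, Gronwall gives $\|e(t)\|_\infty\le C_T\varepsilon$. The bound $\|q_\beta(t)\|_\infty\le C_T\varepsilon$ is already contained in \eqref{large:linear-bound}. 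Together these give \eqref{large:finite-time}, including the traces at positive times.

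\textbf{Flux.} The identity $j_\beta=\varepsilon(p_\beta(t,0)-q_\beta(t,0))$ gives
\[
 \frac{j_\beta}{\varepsilon}-h=\bigl(p_\beta(t,0)-p_N(t,0)\bigr)-q_\beta(t,0)=O(C_T\varepsilon)
\]
by the donor-side step. The value $j_\beta(0)=0$ is consistent with $h(0)=0$ because $u_0$ vanishes near the barrier. The trace bound therefore holds uniformly down to $t=0$, using the continuous extension of both functions from the mild formulation.

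\textbf{Receiving side.} Set $r=q_\beta/\varepsilon-v$. The function $v$ solves $v_t=v_{xx}+v$ with incoming flux $-v_x(t,0)=h$ and zero initial data. Hence $r$ solves
\[
 r_t=r_{xx}+r-\frac{g(q_\beta)}{\varepsilon},\qquad -r_x(t,0)=\frac{j_\beta}{\varepsilon}-h,\qquad r(0,\cdot)=0.
\]
By \eqref{eq:g} and the bound $q_\beta\le C_T\varepsilon$, the source satisfies $|g(q_\beta)|/\varepsilon\le C_T\varepsilon$. The boundary flux is $O(C_T\varepsilon)$ by the flux step. The Duhamel formula with kernel $e^{t-s}$ times the Neumann kernel then gives $\|r(t)\|_\infty\le C_T\varepsilon$. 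This proves \eqref{large:linear-input-error}, and \eqref{large:seed} follows from it.

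\textbf{Properties of $v_T$.} Continuity is immediate. Strict positivity holds because $h>0$ on $(0,T)$ and $G>0$. For the moment \eqref{large:seed-mass}, extend $v_T$ evenly and use Fubini together with
\[
 \int_{\mathbb R}e^xG_r(x)\,dx=e^r.
\]
This gives
\[
 e^{-2T}\int_{\mathbb R}e^xv_T(x)\,dx=e^{-2T}\cdot 2\int_0^T e^{T-s}e^{T-s}h(s)\,ds=2\int_0^T e^{-2s}h(s)\,ds=2I_T.
\]

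\textbf{Main obstacle.} The delicate point is justifying the boundary-flux Duhamel representations for mild solutions with merely measurable $u_0$, so that the flux $j_\beta$ and its trace near $t=0$ are controlled. I would handle this by working in the folded variables with the Robin kernel \eqref{order:robin-kernel}. Near $t=0$ the needed smallness comes from the vanishing of $u_0$ on $[b,\infty)$, exactly as in the proof of Lemma~\ref{lem:reflection-comparison}.
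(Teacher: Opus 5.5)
Your proposal is correct and follows essentially the same route as the paper: the same boundary-source representation with kernel $2G_{t-s}$ and a Gronwall estimate on the donor side, then the flux identity $j_\beta/\varepsilon=p_\beta(t,0)-q_\beta(t,0)$, then linearization of the receiving side through $g$ with an $O(\varepsilon)$ source and boundary error, and finally Tonelli with $\int_{\mathbb R}e^xG_r(x)\,dx=e^r$ for the moment identity. The only cosmetic difference is that you take the bound on $\|q_\beta\|_\infty$ directly from \eqref{large:linear-bound}, whereas the paper closes a single joint Gronwall estimate for $\|p_N-p_\beta\|_\infty+\|q_\beta\|_\infty$; this makes no difference.
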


\begin{proof}
Recall the Neumann heat semigroup $\mathsf N(t)$ on the half-line and put
\[
 b_\beta(t,x)=2\int_0^tG_{t-s}(x)j_\beta(s)\,ds.
\]
The mild formulas with boundary loss and input give
\[
 \begin{split}
 p_N(t)-p_\beta(t)
 &=b_\beta(t)+\int_0^t\mathsf N(t-s)
       [f(p_N(s))-f(p_\beta(s))]\,ds,\\
 q_\beta(t)&=b_\beta(t)+\int_0^t\mathsf N(t-s)f(q_\beta(s))\,ds.
 \end{split}
\]
Since $\|b_\beta(t)\|_\infty\le2\varepsilon\sqrt{t/\pi}$,
contractivity of $\mathsf N$ and the Lipschitz continuity of $f$, with
$f(0)=0$, imply
\[
 \mathcal E_\beta(t):=\|p_N(t)-p_\beta(t)\|_\infty+\|q_\beta(t)\|_\infty
 \le C\varepsilon\sqrt t+\|f'\|_\infty\int_0^t \mathcal E_\beta(s)\,ds.
\]
Gronwall's inequality proves~\eqref{large:finite-time}.
These formulas hold up to the boundary for positive times. The initial
profiles vanish near the interface, so the heat representations also give
$p_\beta(t,0),q_\beta(t,0),h(t)\to0$ as $t\downarrow0$.
Thus $j_\beta$ has the stated continuous extension at zero. The identity
$j_\beta/\varepsilon=p_\beta(t,0)-q_\beta(t,0)$ now gives
\[
 \sup_{0\le t\le T}
 \left|\frac{j_\beta(t)}\varepsilon-h(t)\right|\le C_T\varepsilon.
\]

Writing $f(q)=q-g(q)$, the receiving-side formula becomes
\[
 q_\beta(t,x)=2\int_0^t e^{t-s}G_{t-s}(x)j_\beta(s)\,ds
 -\int_0^t e^{t-s}
      \bigl[\mathsf N(t-s)g(q_\beta(s))\bigr](x)\,ds.
\]
By~\eqref{eq:g} and~\eqref{large:finite-time},
$\sup_{s\le T}\|g(q_\beta(s))\|_\infty\le C_T\varepsilon^2$.
Divide the formula by $\varepsilon$ and subtract $v(t,x)$.
The flux error contributes at most
$C_T\varepsilon\int_0^t(t-s)^{-1/2}\,ds$, and the nonlinear term
contributes at most $C_T\varepsilon\int_0^t e^{t-s}\,ds$.
This proves~\eqref{large:linear-input-error} and hence~\eqref{large:seed}.

Continuity and positivity of $v_T$ follow from its integral representation,
$h(s)>0$ for $s>0$, and the integrable bound $G_r(x)\le Cr^{-1/2}$.
After even extension, Tonelli's theorem and
$\int_\mathbb R e^xG_r(x)\,dx=e^r$ give
\[
 e^{-2T}\int_\mathbb R e^xv_T(x)\,dx
 =2e^{-2T}\int_0^T e^{2(T-s)}h(s)\,ds=2I_T,
\]
which proves~\eqref{large:seed-mass}.
\end{proof}

The error bounds above hold on fixed time intervals, with constants that
may grow with $T$. We next use compactly supported profiles below $v_T$
to construct a lower comparison.

\subsection{A lower bound from small initial data}

We convert the fixed-time input profile into a lower bound on the
leading-edge amplitude using the following small-data law for the logistic
equation.

\begin{lemma}\label{large:small-data}
Let $\chi\in C_c(\mathbb R)$ be fixed, nonzero, and nonnegative. For
$\delta>0$ small enough that $\delta\|\chi\|_\infty\le1$, let $v^\delta$
solve the full-line problem
\[
 v_t=v_{xx}+v-v^2,\qquad v(0,x)=\delta\chi(x).
\]
Normalize the minimal-speed logistic wave by
$U_{\rm log}(z)\sim ze^{-z}$ as $z\to+\infty$. This fixes its tail coefficient at one, rather than imposing $U_{\rm log}(0)=1/2$. Denote the displacement parameter of the right-moving front by $x_\delta$, using the sign convention of~\cite{MRR2022}, so that
\[
 v^\delta(t,m(t)+z)\longrightarrow U_{\rm log}(z+x_\delta)
 \qquad(t\to\infty)
\]
locally uniformly in $z$. In the phase convention $U(z-\xi)$ used above, the corresponding phase is $\xi_{{\rm log},\delta}=-x_\delta$. Define its leading-edge amplitude to be the tail coefficient of this limiting profile,
$\alpha_{\rm log}(\delta\chi):=e^{-x_\delta}$. Then
\begin{equation}\label{large:small-data-amplitude}
 \alpha_{\rm log}(\delta\chi)
 \sim\frac{\delta\log\delta^{-1}}{2\sqrt\pi}
 \int_\R e^x\chi(x)\,dx\qquad(\delta\downarrow0).
\end{equation}
\end{lemma}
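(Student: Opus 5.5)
This lemma is essentially the small-data phase asymptotics of Mytnik--Roquejoffre--Ryzhik~\cite{MRR2022}, translated into the present conventions, so the plan is to invoke that theorem rather than reprove it. First I will recall the statement in~\cite{MRR2022}. For the logistic equation $v_t=v_{xx}+v-v^2$ with initial data $\delta\chi$, and $\chi$ nonnegative, compactly supported and nonzero, the solution converges in the Bramson frame $m(t)=2t-\frac32\log t$ to $U_{\rm log}(z+x_\delta)$, with $U_{\rm log}$ normalized by $U_{\rm log}(z)\sim ze^{-z}$. The shift satisfies
\[
 x_\delta=-\log\Bigl(\frac{\delta\log\delta^{-1}}{2\sqrt\pi}\int_\R e^x\chi(x)\,dx\Bigr)+o(1)\qquad(\delta\downarrow0),
\]
which is the form I would expect in their normalization. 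Exponentiating gives $e^{-x_\delta}\sim\frac{\delta\log\delta^{-1}}{2\sqrt\pi}\int e^x\chi$, which is exactly~\eqref{large:small-data-amplitude}. The real work is therefore bookkeeping: checking that their sign convention, their tail normalization of the wave, and their frame agree with ours.

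The second step is a consistency check against our own framework. The logistic reaction satisfies Assumption~\ref{hyp:f}. For $\beta=0$ and data of the type considered here, Lemma~\ref{lem:leading-edge} and Section~\ref{sec:phase} give convergence to a single wave translate, and the tail coefficient of the limiting profile is the leading-edge amplitude. The relation $U_{\rm log}(z+x_\delta)\sim e^{-x_\delta}(z+x_\delta)e^{-z}$ shows that the coefficient of $ze^{-z}$ is $e^{-x_\delta}$. This justifies the definition $\alpha_{\rm log}(\delta\chi)=e^{-x_\delta}$ and the phase $\xi_{{\rm log},\delta}=-x_\delta$. Note that $\delta\chi$ is not step-like, so existence of the limit must come from~\cite{MRR2022} or~\cite{NRR2017} rather than from Theorem~\ref{thm:phase}.

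As a heuristic check on the constant, I would verify it through the linearized leading edge. The linear evolution $e^tG_t*\delta\chi$ has first moment in the self-similar variable $e^{-2t}\int e^x\cdot(\ldots)$ equal to $\delta\int e^x\chi$. The nonlinearity delays the front until it becomes of order one. That delay happens at time $t_\delta\approx\log\delta^{-1}$-scale, because the mass must grow from $\delta$ to order one, and the Dirichlet-type absorption then acts on a Gaussian of width $\sqrt{t_\delta}$. This produces the factor $\log\delta^{-1}/(2\sqrt\pi)$, with $2\sqrt\pi=\int y\phi$ as in~\eqref{front:projection}. The check only confirms the constant and is not part of the proof.

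The main obstacle is matching conventions exactly, particularly the factor $2\sqrt\pi$ and whether~\cite{MRR2022} normalize by $\int e^x\chi$ or by a different exponential moment. If their normalization differs, I would rescale using the identity $\int_\R e^xG_r(x)\,dx=e^r$, which shows that $\int e^x\chi$ is the quantity preserved by the linear flow up to the factor $e^{2t}$.
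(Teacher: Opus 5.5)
Your proposal is correct and matches the paper's proof: the paper cites \cite[Theorem~1.1]{MRR2022} for $x_\delta=\log\delta^{-1}-\log\log\delta^{-1}-\log\bigl(\frac{1}{2\sqrt\pi}\int_\R e^x\chi(x)\,dx\bigr)+o(1)$, which is your displayed expansion rewritten, notes that $\delta\chi$ satisfies their hypotheses, and exponentiates $-x_\delta$. Your consistency check and heuristic paragraphs are not needed. As you yourself note, Lemma~\ref{lem:leading-edge} does not apply to non-step-like data, so the proof consists of the citation together with the convention bookkeeping you describe.
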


\begin{proof}
With this wave normalization and sign convention for the shift,
Mytnik--Roquejoffre--Ryzhik~\cite[Theorem 1.1]{MRR2022} give
\[
 x_\delta=\log\delta^{-1}-\log\log\delta^{-1}
 -\log\left(\frac1{2\sqrt\pi}\int_\R e^x\chi(x)\,dx\right)+o(1).
\]
Our fixed, nonnegative, continuous, compactly supported profile satisfies
their hypotheses. Exponentiating the negative of this expansion proves
\eqref{large:small-data-amplitude}.
\end{proof}

\begin{proof}[Lower bound for the amplitude in Theorem~\ref{thm:large-beta}]
By~\eqref{eq:g}, choose $C_f\ge1$, depending only on $f$, such that
$f(s)\ge s-C_fs^2$ on $[0,1]$.
Fix $T>0$ and choose a nonzero, nonnegative, even function
$\chi\in C_c(\mathbb R)$ such that, for some $0<\eta<1$,
\[
 \chi(x)\le(1-\eta)v_T(x)\qquad(x\in\operatorname{supp}\chi),
\]
where $v_T$ is extended evenly. Since $v_T$ is continuous and strictly
positive, the gap $v_T-\chi$ on this compact support is at least
$\eta\min_{\operatorname{supp}\chi}v_T>0$.
The uniform convergence in~\eqref{large:seed} therefore gives
$\varepsilon\chi\le q_\beta(T)$ on $x\ge0$ for all sufficiently small
$\varepsilon$, with the threshold allowed to depend on $T,\chi,\eta$.
Outside the support this comparison follows from $q_\beta\ge0$.
Let $w^\varepsilon$ solve the full-line problem
\[
 w_t=w_{xx}+w-C_fw^2,\qquad w(0,x)=\varepsilon\chi(x).
\]
Further restrict $\varepsilon$ so that
$\varepsilon\|\chi\|_\infty\le1/C_f$. Comparison with the constant
solutions gives
\[
 0\le w^\varepsilon(t,x)\le1/C_f\le1\qquad(t\ge0,x\in\mathbb R).
\]
Hence $w^\varepsilon_t-w^\varepsilon_{xx}
=w^\varepsilon-C_f(w^\varepsilon)^2\le f(w^\varepsilon)$ throughout
its evolution. Evenness and uniqueness imply
$w^\varepsilon_x(s,0)=0$ for $s>0$, whereas
\[
 -w^\varepsilon_x(s,0)=0\le j_\beta(T+s)
 =-q_{\beta,x}(T+s,0).
\]
The half-line comparison principle, with the initial ordering at time $T$,
therefore yields
\begin{equation}\label{large:lower-comparison}
 q_\beta(t,x)\ge w^\varepsilon(t-T,x)\qquad(t\ge T,x\ge0).
\end{equation}

The function $C_fw^\varepsilon$ solves the standard logistic equation
with initial datum $C_f\varepsilon\chi$. Write
$x_\varepsilon^{\rm log}$ for its displacement parameter in the convention $U_{\rm log}(z+x_\varepsilon^{\rm log})$ of Lemma~\ref{large:small-data}, and define
\[
 \alpha[w^\varepsilon]
 :=C_f^{-1}e^{-x_\varepsilon^{\rm log}}
 =C_f^{-1}\alpha_{\rm log}(C_f\varepsilon\chi).
\]
Lemma~\ref{large:small-data}, applied with $\delta=C_f\varepsilon$, gives
\[
 \alpha[w^\varepsilon]
 \sim\frac{\varepsilon\log\varepsilon^{-1}}{2\sqrt\pi}
       \int_\R e^x\chi(x)\,dx.
\]
Indeed, the multiplicative $C_f$ cancels, and
$\log(C_f\varepsilon)^{-1}
=\log\varepsilon^{-1}-\log C_f\sim\log\varepsilon^{-1}$.

For fixed $\varepsilon,T,\chi$, evaluate~\eqref{large:lower-comparison}
at $x=m(t)+z$. Since
\[
 x-m(t-T)=z+m(t)-m(t-T)=z+2T+o(1),
\]
phase convergence for the actual solution and for the logistic comparison
solution gives, for every fixed $z\in\mathbb R$,
\[
 U_*(z-\xi_\beta)
 \ge C_f^{-1}U_{\rm log}(z+2T+x_\varepsilon^{\rm log}).
\]
Now let $z\to+\infty$. The coefficient of $ze^{-z}$ on the left is
$\alpha_\beta$, and that on the right is
$e^{-2T}\alpha[w^\varepsilon]$. Thus
\[
 \alpha_\beta\ge e^{-2T}\alpha[w^\varepsilon].
\]
This comparison uses the tails of the two limiting waves and does not
require their profiles to coincide. Letting $\varepsilon\downarrow0$
with $T,\chi$ fixed gives
\begin{equation}\label{large:truncated-lower-bound}
 \liminf_{\beta\to\infty}\frac{\beta\alpha_\beta}{\log\beta}
 \ge\frac{e^{-2T}}{2\sqrt\pi}\int_\mathbb R e^x\chi(x)\,dx.
\end{equation}

To recover the full profile, choose continuous even cutoffs
$0\le\eta_n\le1$ with compact support, $\eta_n=1$ on $[-n,n]$,
and $\eta_n\uparrow1$ pointwise. For $n\ge2$, set
\[
 \chi_n=(1-1/n)\eta_n v_T.
\]
Each $\chi_n$ is admissible with gap parameter $\eta=1/n$, and
$\chi_n\uparrow v_T$. Apply~\eqref{large:truncated-lower-bound} first
for each fixed $T,n$, and then let $n\to\infty$.
Monotone convergence and~\eqref{large:seed-mass} yield
\[
 \liminf_{\beta\to\infty}\frac{\beta\alpha_\beta}{\log\beta}
 \ge\frac{I_T}{\sqrt\pi}.
\]
No uniform small-data asymptotics in $n$ or $T$ are required: the
$\varepsilon\downarrow0$ limit was taken with both fixed.
Finally, $I_T\uparrow I$ as $T\to\infty$, proving
\begin{equation}\label{large:lower-limit}
 \liminf_{\beta\to\infty}\frac{\beta\alpha_\beta}{\log\beta}
 \ge\frac I{\sqrt\pi}.
\end{equation}
\end{proof}

The lower bound uses only the population supplied by a fixed time $T$,
after which the comparison solution evolves without further input.
To obtain the matching upper bound, we next control the contribution of
the entire boundary input through a weighted moment estimate.

\subsection{An adjoint moment upper bound for the sustained input}

Extend the receiving-side solution evenly:
$Q_\beta(t,X)=q_\beta(t,|X|)$. Its initial datum is zero, and
\begin{equation}\label{large:point-source}
 (Q_\beta)_t=(Q_\beta)_{XX}+f(Q_\beta)+2j_\beta(t)\delta_0,
 \qquad 0\le Q_\beta\le1,
\end{equation}
in distributions on the full line. Indeed,
$(Q_\beta)_X(t,0^+)-(Q_\beta)_X(t,0^-)=-2j_\beta(t)$, so the positive
source cancels the singular part of the distributional second derivative.

Set $L=\log\beta\ge2$ and $k=3/2$. We shift the observation frame backwards
by $L$ to obtain a rescaled amplitude $\beta\alpha_\beta$; the source
position in this frame will contribute a first moment of order $L$.
Using $1+t$ makes the coordinates regular at the initial time. Define
\[
 \mu(t)=2t-k\log(1+t),\qquad \sigma=\log(1+t),\qquad
 X=\mu(t)-L+\eta\sqrt{1+t},
\]
and
\begin{equation}\label{large:rescaling}
 r(\sigma,\eta)=(1+t)^{-1/2}e^{\eta\sqrt{1+t}}Q_\beta(t,X)
 =\beta(1+t)e^{X-2t}Q_\beta(t,X).
\end{equation}
We restrict this rescaled solution to $\eta>0$. Its equation is
\begin{equation}\label{large:r-equation}
 r_\sigma=r_{\eta\eta}
 +\left(\frac\eta2-ke^{-\sigma/2}\right)r_\eta+r-N+\nu_\beta,
\end{equation}
where $N=\sqrt{1+t}\,e^{\eta\sqrt{1+t}}g(Q_\beta(t,X))\ge0$, and
\begin{equation}\label{large:rescaled-source}
 \nu_\beta=2\frac{j_\beta(t)}\varepsilon(1+t)^{3/2}e^{-2t}
 \delta_{\eta_s(t)},\qquad
 \eta_s(t)=\frac{L-\mu(t)}{\sqrt{1+t}}.
\end{equation}
The source scaling uses
$\delta_0(X)=(1+t)^{-1/2}\delta_{\eta_s(t)}$.
Equation~\eqref{large:r-equation} holds in distributions on $\eta>0$;
the source is present there only when $\eta_s(t)>0$.
In the quantitative mass, boundary, and moment bounds below, $C$ depends
only on $f,u_0$ and is independent of $\beta\ge e^2$ and time.
For the technical justification of the moment identity on finite time
intervals, constants may depend on the fixed parameter and time horizon;
we indicate these dependencies explicitly.

The weight $\eta$ lies in the kernel of the formal adjoint of the
self-similar operator without the decaying drift term:
\[
 \left(\partial_{\eta\eta}-\frac\eta2\partial_\eta+\frac12\right)\eta=0.
\]
To cancel the leading error due to the drift $-ke^{-\sigma/2}$, introduce
\begin{equation}\label{large:adjoint-weight}
 \psi(\eta)=\int_0^\eta e^{z^2/4}
 \left(\int_z^\infty e^{-y^2/4}\,dy\right)dz,
 \qquad W(\sigma,\eta)=\eta+ke^{-\sigma/2}\psi(\eta),\quad \eta\ge0.
\end{equation}
Direct differentiation and the Gaussian tail bound give
\begin{equation}\label{large:weight-bounds}
 \psi''-\frac\eta2\psi'=-1,\qquad
 0\le\psi'\le\sqrt\pi,\qquad
 0\le\psi(\eta)\le C\log(2+\eta).
\end{equation}
Consequently,
\begin{equation}\label{large:adjoint-residual}
 W_\sigma+W_{\eta\eta}
 -\left(\frac\eta2-ke^{-\sigma/2}\right)W_\eta+\frac12W
 =k^2e^{-\sigma}\psi'.
\end{equation}
This is the approximate adjoint weight of
\cite[Lemma 4.1]{MRR2022}; the nonzero-boundary contribution is also treated
in Section~6 of that work. Here we additionally account for the moving
point source generated by the actual interface flux.

\begin{lemma}
\label{large:moment-bound}
Under Assumptions~\ref{hyp:f}--\ref{hyp:data}, let $\beta\ge e^2$ and
use the rescaling and weight above. The moment
\[
 \mathcal M(\sigma)=\int_0^\infty W(\sigma,\eta)r(\sigma,\eta)\,d\eta
\]
is finite and locally absolutely continuous for $\sigma\ge0$, with
$\mathcal M(0)=0$. There is a constant $C$, depending only on $f,u_0$,
such that, for all $\sigma\ge0$,
\begin{equation}\label{large:uniform-first-moment}
 \int_0^\infty\eta r(\sigma,\eta)\,d\eta
 \le\mathcal M(\sigma)\le2IL+C\log(2+L)+C.
\end{equation}
\end{lemma}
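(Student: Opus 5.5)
The plan is to differentiate $\mathcal M$ in $\sigma$, use the equation \eqref{large:r-equation} together with the adjoint identity \eqref{large:adjoint-residual}, and integrate. Before that, two preliminary facts. Finiteness and local absolute continuity follow from the Gaussian-type bound \eqref{large:linear-bound} and the heat representation of $q_\beta$. That representation gives $q_\beta(t,x)\le C_t\varepsilon e^{-x^2/(4t)}$, so $r$ has Gaussian decay in $\eta$ on finite horizons and $W\le\eta+C\log(2+\eta)$ is integrable against it. Also $\mathcal M(0)=0$ because $Q_\beta(0)=0$. The lower inequality in \eqref{large:uniform-first-moment} is immediate from $W\ge\eta$ (as $\psi\ge0$) and $r\ge0$.

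For the derivative, I test \eqref{large:r-equation} against $W$ on $\eta>0$ and integrate by parts; this is justified first on truncated intervals and then by the decay just mentioned. The identity to aim for is
\[
 \mathcal M'(\sigma)=\Bigl[W r_\eta-W_\eta r\Bigr]_{\eta=0}^{\infty}
 +\int_0^\infty\Bigl(\tfrac12W+k^2e^{-\sigma}\psi'\Bigr)r\,d\eta
 +\int_0^\infty\Bigl(r-\tfrac12 r\Bigr)\ldots
\]
Carried out carefully, the $\frac\eta2 r_\eta$ drift yields $-\frac12\int Wr-\frac\eta2W r|$, and combined with $+r$ this exactly matches the adjoint operator. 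The result is
\[
 \mathcal M'=-\int_0^\infty WN\,d\eta+k^2e^{-\sigma}\int_0^\infty\psi' r\,d\eta+W(\sigma,\eta_s)\,m_\beta(\sigma)\mathbf 1_{\eta_s>0}+\mathcal B(\sigma),
\]
where $m_\beta$ is the mass of $\nu_\beta$ and $\mathcal B$ is the boundary term at $\eta=0$. Since $W(\sigma,0)=0$, $\mathcal B=W_\eta(\sigma,0)r(\sigma,0)=(1+ke^{-\sigma/2}\sqrt\pi)r(\sigma,0)\ge0$ and must be estimated. The nonlinear term is nonpositive, so I drop it.

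The source term is the one that produces $2IL$. Here I use $j_\beta/\varepsilon\le h$ from \eqref{large:flux}, and in $\sigma$-time $d\sigma=dt/(1+t)$, so
\[
 \int W(\sigma,\eta_s)m_\beta\,d\sigma=\int_0^{t_L}2\frac{j_\beta}{\varepsilon}(1+t)^{1/2}e^{-2t}W(\sigma,\eta_s)\,dt,
\]
restricted to $\mu(t)<L$, i.e. $t\lesssim L/2$. With $\eta_s\sqrt{1+t}=L-\mu(t)$ and $W\le\eta_s+C\log(2+\eta_s)$, this is at most
\[
 2\int_0^\infty h(t)e^{-2t}(L-\mu(t))\,dt+C\int e^{-2t}(1+t)^{1/2}\log(2+L)\,dt\le 2IL+C\log(2+L)+C,
\]
using $L-\mu\le L+k\log(1+t)$ and the exponential weight. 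The drift-correction term $k^2e^{-\sigma}\int\psi' r\le Ce^{-\sigma}\int r$ needs a mass bound. For that I use the crude pointwise bound \eqref{large:linear-bound}: it gives $r\le C(1+t)^{1/2}e^{\eta\sqrt{1+t}}e^{X-\ldots}$, which is not good at large $\eta$. So I bound $\int_0^1 r$ by sup and $\int_1^\infty r\le\int\eta r\le\mathcal M$. This gives a Gronwall term $Ce^{-\sigma}\mathcal M$ with integrable coefficient, which is harmless.

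The main obstacle is the boundary flux $\mathcal B=W_\eta(\sigma,0)r(\sigma,0)$ together with the mass bound. The cutoff $\eta=0$ sits at $X=\mu(t)-L$, and there $Q_\beta$ is not small a priori uniformly in $\beta$: $r(\sigma,0)=\beta(1+t)e^{\mu-L-2t}Q_\beta=(1+t)^{1-k}Q_\beta$. With $k=3/2$, the leading-edge bound \eqref{eq:uniform-leading-bound}, shifted, gives only $Q_\beta\le C$, so $\mathcal B\lesssim(1+t)^{-1/2}$, whose $d\sigma$ integral $\int dt/(1+t)^{3/2}$ is finite. That yields a constant $C$ uniform in $\beta$. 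If this crude estimate fails near small times, I would use $Q_\beta\le C\varepsilon e^t/\sqrt{1+t}$ from \eqref{large:linear-bound} for $t\le L$. Combining everything and applying Gronwall with the integrable $e^{-\sigma}$ coefficient gives \eqref{large:uniform-first-moment}.
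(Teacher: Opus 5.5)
Your moment identity, the lower inequality, the boundary term and the source term are all fine. For the boundary term, $r(\sigma,0)=(1+t)^{-1/2}Q_\beta\le(1+t)^{-1/2}$ has $\sigma$-integral at most $2$, uniformly in $\beta$. For the source term, even your weaker bound $L-\mu\le L+k\log(1+t)$ gives $2IL+C\log(2+L)+C$.

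\textbf{The gap: the drift-correction term} $k^2e^{-\sigma}\int_0^\infty\psi' r\,d\eta$. Bounding $\int_1^\infty r$ by $\mathcal M$ and closing with Gronwall does not give \eqref{large:uniform-first-moment}.
\begin{enumerate}
\item The source deposits its weight at $\eta_s\approx L$ during times $t=O(1)$. So $\mathcal M$ already has size about $2L\int_0^t h(s)e^{-2s}\,ds$ at $\sigma=O(1)$, and your term $Ce^{-\sigma}\mathcal M$ contributes an amount of order $L$.
\item Gronwall then gives a source contribution of about $2L\int_0^\infty h(t)e^{-2t}e^{C/(1+t)}\,dt$, which is strictly larger than $2IL$ (crudely, $e^{C}\cdot 2IL$).
\item The leading coefficient $2I$ is the whole point of the lemma. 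It is what makes the upper bound $\limsup_{\beta\to\infty}\beta\alpha_\beta/\log\beta\le I/\sqrt\pi$ match the lower bound. A bound $\mathcal M\le CL$ with a worse constant gives only the order of $\alpha_\beta$, not \eqref{eq:large-amplitude}.
\item The piece $\int_0^1 r$ is not closed either. Using $Q_\beta\le1$ you only get $e^{-\sigma}\sup_{0\le\eta\le1}r\le(1+t)^{-3/2}e^{\sqrt{1+t}}$, which is not integrable in $\sigma$. The bound \eqref{large:linear-bound} helps only while $\varepsilon e^t$ is small, that is, $t\lesssim L$.
\end{enumerate}

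\textbf{The missing ingredient} is a $\beta$-uniform bound on the mass $\int_0^\infty r\,d\eta$ itself, rather than on the first moment.
\begin{enumerate}
\item Compare the even extension in \eqref{large:point-source} with the linear full-line point-source problem: $Q_\beta(t,X)\le2\int_0^te^{t-s}G_{t-s}(X)j_\beta(s)\,ds$.
\item Enlarge the integral to all of $\R$ and use $\int_\R e^XG_r(X)\,dX=e^r$ together with $j_\beta\le\varepsilon h$ from \eqref{large:flux}. This gives
\[
 \int_0^\infty r(\sigma,\eta)\,d\eta\le2\sqrt{1+t}\int_0^t h(s)e^{-2s}\,ds\le2Ie^{\sigma/2}.
\]
\item Hence $k^2e^{-\sigma}\int\psi' r\,d\eta\le Ce^{-\sigma/2}$, whose total integral is $O(1)$ uniformly in $\beta$.
\end{enumerate}
With this bound no Gronwall step is needed. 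Every retained term in the identity is nonnegative with an explicitly bounded time integral, so integrating from $\mathcal M(0)=0$ leaves exactly $2IL$ from the source plus $C\log(2+L)+C$.
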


\begin{proof}
We first bound the mass and boundary trace. Full-space linear comparison
in~\eqref{large:point-source} gives
\[
 Q_\beta(t,X)\le2\int_0^t e^{t-s}G_{t-s}(X)j_\beta(s)\,ds.
\]
Changing variables in the mass integral, enlarging its spatial domain to
$\mathbb R$, and using $\int_\mathbb R e^XG_r(X)\,dX=e^r$ yield
\begin{equation}\label{large:mass-bound}
 \int_0^\infty r(\sigma,\eta)\,d\eta
 \le 2\sqrt{1+t}\int_0^t\frac{j_\beta(s)}\varepsilon e^{-2s}\,ds
 \le2I e^{\sigma/2}.
\end{equation}
For $b(\sigma)=r(\sigma,0)$,~\eqref{large:linear-bound} implies
$b\le C\varepsilon e^t/(1+t)\le Ce^{-3L/4}$ when $t\le L/4$.
For $t\ge L/4$, the bound $Q_\beta\le1$ gives
$b\le(1+t)^{-1/2}$. Since $d\sigma=dt/(1+t)$,
\begin{equation}\label{large:boundary-bound}
 \int_0^\infty b(\sigma)\,d\sigma
 \le Ce^{-3L/4}\log(1+L)+CL^{-1/2}\le CL^{-1/2}.
\end{equation}

Define the source weight without evaluating $W$ at negative arguments:
\[
 \mathcal W_s(\sigma)=
 \begin{cases}
 W(\sigma,\eta_s(t)),&\eta_s(t)>0,\\
 0,&\eta_s(t)\le0,
 \end{cases}
 \qquad t=e^\sigma-1.
\]
Using~\eqref{large:adjoint-residual} and $W(\sigma,0)=0$, integration
by parts gives, for almost every $\sigma>0$,
\begin{equation}\label{large:moment-identity}
 \begin{split}
 \mathcal M'={}&W_\eta(\sigma,0)b(\sigma)
 +k^2e^{-\sigma}\int_0^\infty\psi' r\,d\eta
 -\int_0^\infty WN\,d\eta\\
 &+2\frac{j_\beta(t)}\varepsilon(1+t)^{3/2}e^{-2t}
 \mathcal W_s(\sigma).
 \end{split}
\end{equation}
We justify this identity in its time-integrated form. First fix $\beta$
and a finite physical time horizon $T$. The linear Gaussian bound and
$j_\beta\le\varepsilon$ give
\[
 \int_{\mathbb R}(1+|X|)e^X Q_\beta(t,X)\,dX
 \le C_T\varepsilon t,\qquad 0\le t\le T,
\]
because $\int_{\mathbb R}(1+|X|)e^XG_s(X)\,dX$ is bounded for
$0<s\le T$. On $\eta\ge0$, we have $W\le C(1+\eta)$, and the
coordinate change therefore implies
\[
 0\le\mathcal M(\log(1+t))\le C_{\beta,T}t\longrightarrow0
 \qquad(t\downarrow0).
\]
This establishes the initial weighted trace. These finite-time constants
are used only to justify the identity, not to obtain its uniform upper bound.

Choose $\vartheta\in C_c^\infty([0,\infty))$ with
$0\le\vartheta\le1$, $\vartheta=1$ on $[0,1]$, and $\vartheta=0$ on
$[2,\infty)$, and put $\vartheta_R(\eta)=\vartheta(\eta/R)$.
Use $W\vartheta_R$ as a spatial test function, together with a smooth
time test function, in~\eqref{large:r-equation}. Its value at zero
vanishes and its derivative there equals $W_\eta(\sigma,0)$, giving
the displayed boundary term. The distributional source gives its weight
$\mathcal W_s$ once $R$ exceeds the source position on the time interval.
All spatial derivatives can be transferred to the test function. Thus
the cutoff errors involve only $r$ multiplied by derivatives of
$W\vartheta_R$, supported in $R\le\eta\le2R$. The Gaussian bound
controls these terms on the fixed finite time interval and makes them
vanish as $R\to\infty$; no far-field derivative estimate for $r$ is needed.
Moreover, $0\le g(Q_\beta)\le C Q_\beta$ gives $N\le C(1+t)r$, so the
weighted absorption is locally integrable by the preceding moment bound.
The boundary and source terms are also locally integrable. Passing to the
limit yields the time-integrated identity, local absolute continuity of
$\mathcal M$, and~\eqref{large:moment-identity} almost everywhere.
The initial weighted trace permits integration from $\sigma=0$.
The function $\mu$ is strictly increasing from zero, so the source crosses
$\eta=0$ once. Since $W(\sigma,0)=0$ and $r$ is continuous across the
source, the crossing produces no additional atomic term in the moment.

We now integrate~\eqref{large:moment-identity} and discard its nonpositive
absorption term. The boundary contribution satisfies
\[
 \int_0^\infty W_\eta(\sigma,0)b(\sigma)\,d\sigma
 \le CL^{-1/2},
\]
because $W_\eta(\sigma,0)=1+k\sqrt\pi e^{-\sigma/2}$ is bounded.
The adjoint error is integrable uniformly in $\beta$: by
\eqref{large:mass-bound} and~\eqref{large:weight-bounds},
\[
 k^2e^{-\sigma}\int_0^\infty\psi' r\,d\eta
 \le C e^{-\sigma/2},\qquad
 \int_0^\infty C e^{-\sigma/2}\,d\sigma\le C.
\]
For the source term, when $\mu(t)<L$ we have
\[
 \sqrt{1+t}\,W(\sigma,\eta_s(t))
 =L-\mu(t)+k\psi(\eta_s(t)).
\]
Its full time integral is therefore
\begin{equation}\label{large:source-moment}
 2\int_{\{t\ge0:\,\mu(t)<L\}}\frac{j_\beta(t)}\varepsilon e^{-2t}
 \left[L-\mu(t)+k\psi(\eta_s(t))\right]dt.
\end{equation}
Here $\mu(t)\ge0$ and $0\le\eta_s(t)\le L$ on the integration region.
Using $j_\beta/\varepsilon\le h$ and~\eqref{large:weight-bounds}, the
contribution of $L-\mu(t)$ is at most $2IL$, and that of the correction
is at most $CI\log(2+L)$. Thus~\eqref{large:source-moment} is bounded by
$2IL+C\log(2+L)$.
All three retained contributions are nonnegative, so their full time
integrals bound their integrals up to any $\sigma$. Together with
$\mathcal M(0)=0$ and $W\ge\eta$, these estimates prove
\eqref{large:uniform-first-moment}.
\end{proof}

\begin{proof}[Upper bound and conclusion of Theorem~\ref{thm:large-beta}]
Fix $\beta\ge e^2$, so that $L$ is fixed, and take $t\to\infty$ first.
For every fixed $\eta>0$, set
\[
 \widehat\eta_t
 =\frac{\mu(t)-L+\eta\sqrt{1+t}-m(t)}{\sqrt t}
 \longrightarrow\eta.
\]
Since $\mu(t)-m(t)\to0$, the point
$X=\mu(t)-L+\eta\sqrt{1+t}$ is eventually positive. The normalized
solution $w_\beta$ from Section~\ref{front:section} satisfies the exact
relation
\[
 r(\sigma,\eta)=\beta\frac{1+t}{t}
 w_\beta(\log t,\widehat\eta_t).
\]
The profile estimate~\eqref{front:nonlinear-profile}, applied near the
positive limit $\eta$, therefore gives
\[
 r(\sigma,\eta)\longrightarrow
 \beta\alpha_\beta\eta e^{-\eta^2/4}.
\]
Apply Fatou's lemma to the nonnegative functions $\eta r(\sigma,\eta)$.
Since $\int_0^\infty\eta^2e^{-\eta^2/4}\,d\eta=2\sqrt\pi$,
Lemma~\ref{large:moment-bound} yields
\[
 2\sqrt\pi\,\beta\alpha_\beta
 \le\liminf_{\sigma\to\infty}\int_0^\infty\eta r(\sigma,\eta)\,d\eta
 \le2IL+C\log(2+L)+C.
\]
Only now divide by $L=\log\beta$ and let $\beta\to\infty$. We obtain
\begin{equation}\label{large:upper-limit}
 \limsup_{\beta\to\infty}\frac{\beta\alpha_\beta}{\log\beta}
 \le\frac I{\sqrt\pi}.
\end{equation}
The lower bound~\eqref{large:lower-limit} and this upper bound prove
\eqref{eq:large-amplitude}. Finally, the amplitude--delay relation
\eqref{order:tau-alpha} gives
\[
 \tau_\beta=\frac12\log\frac{\alpha_0}{\alpha_\beta}
 =\frac12\log\beta-\frac12\log\log\beta
 +\frac12\log\frac{\sqrt\pi\alpha_0}{I}+o(1),
\]
which is~\eqref{eq:large-delay} and completes the proof.
\end{proof}


\end{document}